\newcounter{commento}
\numberwithin{equation}{section} %numerazione equazioni
\let\origthm@notefont\thm@notefont
\renewcommand{\thm@notefont}[1]{\origthm@notefont{#1\the\thm@headfont}}
\newtheorem{thm}{Theorem}[section]
\newtheorem{theorem}[thm]{Theorem}
\newtheorem{corollary}[thm]{Corollary}
\newtheorem{lemma}[thm]{Lemma}
\newtheorem{proposition}[thm]{Proposition}
\theoremstyle{definition}
\newtheorem{definition}[thm]{Definition}
\newtheorem{example}[thm]{Example}
\newtheorem{observation}[thm]{Observation}
\newtheorem{remark}[thm]{Remark}
\newtheorem{notation}[thm]{Notation}
\newcommand{\cat}[1]{{\boldsymbol{\mathsf{#1}}}} %categoria
\newcommand{\Sets}{\cat{Set}}
\newcommand{\SMan}{\cat{SMan}}
\newcommand{\AoMan}{\cat{A_0 Man}}
\newcommand{\cAoMan}{\cat{\mathcal{A}_0 Man}}
\newcommand{\SAlg}{\cat{SAlg}}
\newcommand{\Gras}{\cat{\Lambda}}
\newcommand{\SWA}{\cat{SWA}}
\newcommand{\op}{^{\mathrm{op}}} %opposto
\newcommand{\funct}[2]{\left[{#1},{#2}\right]}
\newcommand{\cfunct}[2]{\left[{#1}\op,{#2}\right]}
\newcommand{\dfunct}[2]{\left[\left[{#1},{#2}\right]\right]}
\newcommand{\subfunc}[2]{#1_{#2}}
\newcommand{\FOP}[1]{#1(\blank)} %funtore dei punti
\newcommand{\fop}[2]{#1(#2)} %funtore dei punti valutato su un oggetto/morfismo
\newcommand{\yon}{\mathcal{Y}} %embedding di Yoneda
\newcommand{\ber}{\mathcal{B}} %embedding di Berezin
\newcommand{\sch}{\mathcal{S}} %embedding di Schwarz
\newcommand{\nset}[1]{{\mathds{#1}}} %insieme numerico
\newcommand{\N}{\nset{N}}
\newcommand{\Z}{\nset{Z}}
\newcommand{\R}{\nset{R}}
\renewcommand{\to}{\mathchoice{\longrightarrow}{\rightarrow}{\rightarrow}
{\rightarrow}}
\let\mto\mapsto \renewcommand{\mapsto}{\mathchoice{\longmapsto}
{\mto}{\mto}{\mto}}
\newcommand{\id}{\mathds{1}} %identita`
\newcommand{\pr}{\mathrm{pr}} %proiezione
\newcommand{\isom}{\cong} %isomorfo
\newcommand{\Hom}{\mathrm{Hom}}
\newcommand{\HOM}{\underline{\Hom}}
\newcommand{\Der}{\mathrm{Der}}
\newcommand{\restr}[2]{{#1}_{|{#2}}} %restrizione
\newcommand{\sheaf}[1][O]{\mathcal{#1}} %fascio
\newcommand{\stalk}[3][O]{\sheaf[#1]_{#2,#3}} %stalk; es.: \stalk{M}{x}
\newcommand{\distr}[3][]{\sheaf_{#2,#3}^{#1*}} %distribuzioni a supporto puntuale (di ordine k); es.: \distr[k]{M}{x}
\newcommand{\red}[1]{\widetilde #1} %ridotto/a; es.: \red{x_A}
\newcommand{\topo}[1]{\abs{#1}} %spazio topologico
\newcommand{\maxid}{\mathcal{M}} %ideale massimale
\newcommand{\ev}{\mathrm{ev}} %valutazione
\newcommand{\Cinf}{\sheaf[C]^\infty} %fascio funzioni lisce
\newcommand{\functions}{\sheaf[F]} %funzioni arbitrarie
\newcommand{\p}[1]{p(#1)} %parita` (potrebbe essere impostato come o $p(x)$ o $|x|$)
\newcommand{\nil}[1]{\accentset{\circ}#1} %parte nilpotente; es.: \nil{x_i}
\newcommand{\ext}[2][]{\Lambda_{#1}(#2)} %algebra esterna \ext[campo]{generatori}
\newcommand{\extn}[1]{\Lambda_{#1}} %algebra esterna \extn{numero di generatori}
\newcommand{\fseries}[2]{\mathfrak{A}_{#1|#2}} %serie formali in variabili pari e dispari
\newcommand{\rspoly}[2]{\R[#1|#2]} %super polinomi su R
\newcommand{\funcpt}[1]{\underline #1} %immagine morfismo attraverso funtore A-punti; es.: \funcpt{\rho_\alpha}
\newcommand{\X}{\mathrm{x}}
\newcommand{\Y}{\mathrm{y}}
\newcommand{\T}{\uptheta}
\newcommand{\nbd}{\protect\nobreakdash-\hspace{0pt}} %trattio per "k-spazio" e simili (non va a capo dopo il trattino)
\DeclarePairedDelimiter{\abs}{\lvert}{\rvert} %modulo (anche cardinalit\`{a} e simili) (\abs* rende l'altezza delle barre automatica)
\newcommand{\pair}[2]{\big\langle #1, #2 \big\rangle} %pairing
\newcommand{\blank}{\mspace{2mu}{\cdot}\mspace{2mu}} %puntino segnaposto
\newcommand{\pd}[2]{\frac{\partial #1}{\partial #2}} %derivate parziale
\newcommand{\supp}{\mathrm{supp}} %supporto
\renewcommand{\phi}{\varphi}
\renewcommand{\theta}{\vartheta}
\renewcommand{\epsilon}{\varepsilon}
\newcommand{\cA}{\mathcal{A}}
\newcommand{\cF}{\mathcal{F}}
\newcommand{\cG}{\mathcal{G}}
\newcommand{\cU}{\mathcal{U}}
\newcommand{\F}{\boldsymbol{F}}
\newcommand{\f}{\boldsymbol{f}}
\newcommand{\bU}{\accentset{\frown}{U}}
\newcommand{\bV}{\accentset{\frown}{V}}
\begin{document}

\centerline{\Large\bf The local functors of points of
supermanifolds}

\bigskip

\centerline{L. Balduzzi$^\natural$, C. Carmeli$^\natural$, R.
Fioresi$^\flat$}

\medskip

%\centerline{\it $^\natural$Dipartimento di Fisica, Universit\`a di Genova, and INFN, sezione di Genova}
%\centerline{\it Via Dodecaneso 33, 16146 Genova, Italy}
\centerline{\footnotesize e-mail: luigi.balduzzi@{ge.infn.it},
claudio.carmeli@{ge.infn.it}, fioresi@dm.unibo.it}

\medskip

%\centerline{\it $^\flat$Dipartimento di Matematica, Universit\`a di Bologna}
%\centerline{\it Piazza di Porta San Donato 5, 40127 Bologna, Italy}
%\centerline{\footnotesize e-mail: fioresi@dm.unibo.it}

\begin{abstract}
We study the local functor of points (which we call the
Weil--Berezin functor) for
smooth supermanifolds, providing a characterization,
representability theorems and
applications to differential calculus.
\end{abstract}

%\tableofcontents

\section{Introduction}

Since the 1970s the foundations of supergeometry have been
investigated by several physicists and mathematicians. Most of the
treatments (e.~g.\ \cite{BL, Kostant, Berezin, Leites, Manin, DM,
Varadarajan}) present supermanifolds as classical manifolds where
the structure sheaf is  modified so that the sections are allowed to
take values in
$\Z_2$\nbd graded commutative algebras %. In other words,
%anticommuting coordinates are introduced as odd elements in the
%structure sheaf
and the sheaf itself is assumed to be locally of the form
$\Cinf(\R^p)\otimes \extn{q}$, with $\extn{q}$ denoting the
Grassmann algebra in $q$ generators. This approach is very much in
the spirit of classical algebraic geometry and dates back to the
seminal works of F. A. Berezin and D. A. Le{\u\i}tes \cite{BL}, and
B. Kostant \cite{Kostant}.

\medskip

It is nevertheless only later in \cite{Manin, DM}, that the
parallelism with classical algebraic geometry is fully worked out
and the functorial language starts to be used systematically. In
particular the functor of points approach becomes a {powerful}
device allowing, among other things, one to recover some geometric
intuition by giving a rigorous meaning to otherwise just formal
expressions.
% and to recover some geometric
%intuition otherwise lost.
In this approach, a supermanifold $M$ is fully recovered by the
knowledge of its functor of points, $S \mapsto \fop{M}{S} \coloneqq
\Hom(S,M)$, which associates to a supermanifold $M$, the set of its
$S$\nbd points for every supermanifold $S$. The crucial result in
this context is Yoneda's lemma which establishes a bijective
correspondence between morphisms of supermanifolds and natural
transformations between their corresponding functors of points.
% (Yoneda's lemma is actually a more
%general result valid for generic locally small categories; see, for
%example, \cite{MacLane}).

\medskip

%Whereas in the sheaf theoretic approach the nilpotent coordinates are
%introduced enlarging the sheaf of a classical
%manifold without modifying the underling topological space,
%another possible approach to supermanifolds theory consists in
%introducing new local models for the underlying set itself
%(\cite{Batchelor, Rogers80, DeWitt, Schwarz}).
%In this setting, supermanifolds are
%obtained by gluing domains of the form $\Lambda_0^p\times
%\Lambda_1^q$, where $\Lambda_0$ and $\Lambda_1$ are the even and odd
%part of some Grassmann algebra $\Lambda$. This idea is
%actually the original physicists approach to supergeometry and
%only later its mathematical foundations were

Other approaches to the theory of supermanifolds involving new local
models and possibly non Hausdorff topologies were developed later
(\cite{Batchelor, Rogers80, DeWitt, Schwarz}).
%In particular, in
%\cite{Batchelor} M. Batchelor, following B. DeWitt, considers
%supermanifolds which are locally isomorphic to $(\extn{L})_0^p
%\times (\extn{L})_1^q$ where $\extn{L}$ is an arbitrary fixed
%Grassmann algebra with $L>q$ generators; the topology is non
%Hausdorff and the smooth functions over a supermanifold are
%defined in an ad hoc way in order to obtain the same
%class of morphisms as in Kostant's approach. From another point of
%view, A. S. Schwarz and A. A. Voronov (see \cite{Schwarz, Voronov})
%consider simultaneously every finite dimensional Grassmann algebra
%$\Lambda$ and local models that vary functorially with $\Lambda$;
%the topology is the classical Hausdorff one and  morphisms between
%superdomains are defined as appropriate natural transformations
%between them. Both approaches turn out to
%be equivalent to Kostant's original formulation, as it is proved in
%\cite{Batchelor} and \cite{Voronov}. Other possible frameworks
%involving a wider class of supermanifold morphisms have been
%considered in the literature (see, for example, \cite{DeWitt} and
%\cite{Rogers80}). In particular in DeWitt's approach, supermanifolds
%have a non Hausdorff topology and  are modeled on a
%Grassmann algebra with countable infinitely many generators.
For a detailed review of some of these approaches, that we do not
pursue here, we refer the reader to \cite{BBHR, Rogers07}.

\medskip

This paper is devoted to understanding the approach to supermanifolds
theory via the \emph{local functor of points}, which associates to
each {smooth} supermanifold $M$ the set of its $A$\nbd points for
all super Weil algebras $A$. These are finite dimensional
commutative superalgebras of the form $A = \R \oplus \nil{A}$ with
$\nil{A}$ a nilpotent ideal. The set of the $A$\nbd points of the
smooth supermanifold $M$ is defined as $M_A=\Hom_{\SAlg}(\sheaf(M),
A)$, in striking analogy with the functor of points previously
described. In fact, when $A$ is a {finite dimensional} Grassmann
algebra, $M_A$ is indeed the set of the $\R^{0|q}$\nbd points of the
supermanifold $M$ in the sense specified above, for suitable $q$. As
we have defined it, the local functor of points does \emph{not}
determine the supermanifold,
%this is different from from what happens for the
%functor of points,
unless we put an extra structure on $M_A$, in
other words, unless we carefully define the category image for
the functor $A \mapsto M_A$.

\medskip

Our approach is a slight modification of the one in \cite{Schwarz,
Voronov}, by Schwarz and Voronov, the main difference being that
they consider Grassmann algebras instead of super Weil algebras. In
this sense our work is mainly providing additional insight into well
known results and clarifies the representability issues
% of the functor of points and the local functor of points that are
often overlooked in most of the literature.
%, but very important,
%since they are often the only means to handle geometrically
%supergeometric objects.
Moreover the local functor of points that we
examine in our work (\emph{Weil--Berezin functor}) has the advantage
of being able to bring differential calculus naturally into the picture.%

\medskip

The paper is organized as follows.

\medskip

In section \ref{sec:basic_def} we review some  basic definitions of
supergeometry like the definition of superspace, supermanifold and
its associated functor of points.
%as it is
%discussed in J. Bernstein's notes by P. Deligne and J. W. Morgan \cite{DM}.
%We briefly recall the representability problem and we state
%the representability theorem for supermanifolds.

\medskip

In section \ref{sec:SWA_A-points} we introduce super Weil algebras
with their basic properties
%The basic observation is that super Weil
%algebras are, by definition, local algebras. At a heuristic level
%it is clear they are well suited to study the local
%properties of a supermanifold, or, in other words, the properties of
%the stalks at the various points of a supermanifold.
%Once
and we define the functor of the $A$-points of a supermanifold $M$,
$A \mapsto M_A$ from the category of super Weil algebras to the
category of sets. %, it  is only natural to look for an
%analogue of Yoneda's lemma, in other words for a result that
%allows to retrieve the supermanifold from its local functor of points.
%It turns out (see subsection
%\ref{subsec:A-point_nat_trans}) that, as it is stated,
%this result is not true;
%We shall then see that
We show
this functor does not characterize the
supermanifold $M$. In order to obtain this,
%In order for this
%functor of pointsto be able to characterize
%the supermanifold,
the image category needs to be suitably specialized by giving to
each set $M_A$ an extra structure.

\medskip

In section \ref{sec:WB_functor-Sh_emb},
%we discuss the modifications
%we need to introduce in order to obtain a
we obtain a bijective correspondence between supermanifold morphisms
and natural transformations between the functors of $A$-points, by
endowing
%Following closely what is proved in \cite{Schwarz,
%Voronov}, it turns out that it is necessary to endow
the set $M_A$ with the structure of an $A_0$\nbd smooth manifold.
%(see definition \ref{def:Az_man}).
%We call this new local functor $A\to M_A$, with
%$M_A$ an $A_0$\nbd smooth manifold, the
%\emph{Weil--Berezin functor of $M$}.
%The main result is that,
%In such a context,
For this new functor, called the \emph{Weil--Berezin functor of $M$}
the analogue of
Yoneda's lemma holds %(see theorem \ref{theor:full_and_faithful}),
and, as a consequence, supermanifolds embed in a full and faithful way
into the category of Weil--Berezin functors (\emph{Schwarz
embedding})
%In analogy with the classical theory, it is only natural to ask
%under which conditions a generic local functor\comment{controllare
%``generic local functor''} is representable.
and we can prove a representability theorem.
%for generic functors from the category of
%super Weil algebras to the category of $A_0$\nbd smooth manifolds.
We end the section by giving  a brief account of  the functor of
$\Lambda$\nbd points originally described by Schwarz, which is the
restriction of the Weil--Berezin functor to Grassmann algebras.
%which form a full subcategory of
%super Weil algebras.

\medskip

In section \ref{sec:diff_calc} we examine some aspects of super
differential calculus on supermanifolds in the language of the
Weil--Berezin functor,
%We describe
%the finite support distributions over the supermanifold
%$M$ and their relations with the $A$\nbd points of $M$,
establishing a connection between our treatment and Kostant's
seminal approach to supergeometry and proving the Weil transitivity
theorem.

%We also prove the super version of the Weil transitivity
%theorem, which is a key tool for the study of the infinitesimal
%aspects of supermanifolds, and we apply it to define
%the ``tangent functor'' of $A\mapsto M_A$.

\paragraph{Acknowledgements.} We want to thank prof.\ G. Cassinelli, prof.\ M.
Duflo, prof. P. Michor
and prof.\ V. S. Varadarajan for helpful
discussions.

\section{Basic Definitions of Supergeometry} \label{sec:basic_def}

In this section we recall few basic definitions in supergeometry.
Our main references are \cite{Kostant, Manin, DM, Varadarajan}.

%\subsection{Supermanifolds} \label{subsec:SMan}
%In all the discussion we assume that the graund field is $\R$.

\medskip

Let $\R$ be our ground field.

\medskip

A \emph{super vector space} is a $\Z_2$\nbd graded vector space,
i.~e.\ $V=V_0 \oplus V_1$; the elements in $V_0$ are called
\emph{even}, those in $V_1$ \emph{odd}. An element $v \neq 0$ in
$V_0 \cup V_1$ is said \emph{homogeneous} and $\p{v}$ denotes its
parity: $\p{v}= 0$ if $v \in V_0$, $\p{v}=1$ if $v \in V_1$.
$\R^{p|q}$ denotes the super vector space $\R^p \oplus \R^q$. A
\emph{superalgebra} $A$ is an algebra that is also a super vector
space, $A=A_0 \oplus A_1$, and such that $A_i A_j \subseteq
A_{i+j\pmod 2}$. $A_0$ is an algebra, while $A_1$ is an $A_0$\nbd
module. $A$ is said to be \emph{commutative} if for any two
homogeneous elements $x$ and $y$,
%\[
    $xy = (-1)^{\p{x}\p{y}} yx$. % \text{.}
%\]
The category of real commutative superalgebras is denoted by
$\SAlg$ and all our superalgebras are assumed to be in $\SAlg$.
%commutative
%unless otherwise specified.

\begin{definition}
A \emph{superspace} $S=(\topo{S}, \sheaf_S)$ is a topological space
$\topo{S}$, endowed with a sheaf of superalgebras $\sheaf_S$ such
that the stalk at each point  $x \in \topo{S}$, denoted by
$\sheaf_{S,x}$, is a local superalgebra (i.e. it has a unique
graded maximal ideal).
A \emph{morphism} $\phi
\colon S \to T$ of superspaces is a pair $(\topo{\phi}, \phi^*)$,
where $\topo{\phi} \colon \topo{S} \to \topo{T}$ is a continuous map
of topological spaces and $\phi^* \colon \sheaf_T \to \topo{\phi}_*
\sheaf_S$, called \emph{pullback}, is such that
$\phi_x^*(\maxid_{\topo{\phi}(x)}) \subseteq \maxid_x$ where
$\maxid_{\topo{\phi}(x)}$ and $\maxid_{x}$ denote the maximal ideals
in the stalks $\sheaf_{T,\topo{\phi}(x)}$ and $\sheaf_{S,x}$
respectively.
\end{definition}

\begin{example}[The smooth local model]
The superspace $\R^{p|q}$ is the topological space $\R^p$ endowed
with the following sheaf of superalgebras. For any open set $U
\subseteq \R^p$ define
%\[
$\sheaf_{\R^{p|q}}(U) \coloneqq \Cinf_{\R^p}(U) \otimes \ext{\theta_1,\dots,\theta_q}$,
%\]
where $\ext{\theta_1,\dots,\theta_q}$ is the real exterior algebra
(or \emph{Grassmann algebra}) generated by the $q$ variables
$\theta_1,\dots,\theta_q$ and $\Cinf_{\R^p}$ denotes the $\Cinf$
sheaf on $\R^p$.
\end{example}

%By a common abuse of notation, $\R^{m|n}$ denotes both the
%super vector space $\R^m \oplus \R^n$ and the superspace
%defined above.

\begin{definition} \label{def:supermanifold}
A (smooth) \emph{supermanifold} of dimension $p|q$ is a superspace
$M=(\topo{M},\sheaf_M)$ which is locally isomorphic to $\R^{p|q}$,
i.~e.\ for all $x \in \topo{M}$ there exist  open sets $x \in V_x
\subseteq \topo{M}$ and $U \subseteq \R^{p}$ such that:
%\[
$    \restr{\sheaf_{M}}{V_x} \isom \restr{\sheaf_{\R^{p|q}}}{U}$.
%\]
In particular supermanifolds of the form
$(U,\restr{\sheaf_{\R^{p|q}}}{U})$ are called \emph{superdomains}. A
\emph{morphism} of supermanifolds is simply a morphism of
superspaces. $\SMan$ denotes the category of supermanifolds.
We shall denote with $\sheaf(M)$ the superalgebra $\sheaf_M(\topo{M})$ of
global sections on the supermanifold $M$.
\end{definition}

If $U$ is open in $\topo{M}$, $(U,\restr{\sheaf_M}{U})$ is also a
supermanifold and it is called the \emph{open supermanifold
associated with $U$}. We shall often refer to it just by $U$,
whenever no confusion is possible.

%\medskip
%In order to avoid  heavy notations, if $M$ is a supermanifold, we
%will denote by $\sheaf(M)$ the superalgebra $\sheaf_M(\topo{M})$ of
%global sections on $M$.

\medskip

Suppose $M$ is a supermanifold and $U$ is an open subset of
$\topo{M}$. Let $\sheaf[J]_M(U)$ be the ideal of the nilpotent
elements of $\sheaf_M(U)$. $\sheaf_M/\sheaf[J]_M$ defines a sheaf of
purely even algebras over $\topo{M}$ locally isomorphic to
$\Cinf(\R^p)$. Therefore $\red{M} \coloneqq
(\topo{M},\sheaf_M/\sheaf[J]_M)$ defines a classical smooth
manifold, called the \emph{reduced manifold} associated with $M$.
The projection $s \mapsto \red{s} \coloneqq s + \sheaf[J]_M(U)$,
with $s \in \sheaf_M(U)$, is the pullback of the embedding $\red{M}
\to M$.
%\medskip
%In the following we denote by $\ev_x(s) \coloneqq \red{s}(x)$ the
%\emph{evaluation} of $s$ at $x \in U$. It is also possible to check
%that
If $\phi$ is a supermanifold morphism, since
$\topo{\phi}^*(\red{s}) = \red{{\phi^*(s)}}$, %so that the
the morphism $\topo{\phi}$ is automatically
smooth.
%Moreover since the
%maximal ideal $\maxid_x$ in the stalk $\stalk{M}{x}$ is given by the
%germs of sections whose value at $x$ is zero, we have that the
%locality condition in the case of supermanifold  morphisms is
%automatically satisfied.
%

\medskip

There are several equivalent ways to assign a morphism between two
supermanifolds. The following result can be found in
\cite[ch.~4]{Manin}.

\begin{theorem}[Chart theorem] \label{th:morphisms}
%\comment{(era proposition)}
Let $U$ and $V$ two smooth superdomains, i.~e.\ two open
subsupermanifolds of $\R^{p|q}$ and
$\R^{m|n}$ respectively. There is a bijective correspondence between \\
%\begin{enumerate}
%    \item
1. superspace morphisms $U \to V$; \\
%    \item
2. superalgebra morphisms $\sheaf(V) \to \sheaf(U)$; \\
%    \item
3. the set of pullbacks of a fixed coordinate system on
    $V$, i.~e.\ $(m|n)$\nbd uples
    \[
        (s_1,\ldots,s_m,t_1,\ldots,t_n) \in
        \sheaf(U)_0^m \times \sheaf(U)_1^n
    \]
    such that $\big(\red{s_1}(x),\ldots,\red{s_m}(x)\big)
    \in \topo{V}$ for each $x \in \topo{U}$.
%\end{enumerate}
\end{theorem}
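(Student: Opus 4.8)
The strategy is to prove the equivalences $(1)\Leftrightarrow(2)$ and $(2)\Leftrightarrow(3)$ separately, with the first being essentially formal and the second carrying the analytic content. For $(1)\Leftrightarrow(2)$: a superspace morphism $\phi\colon U\to V$ certainly determines a superalgebra morphism $\phi^*\colon\sheaf(V)\to\sheaf(U)$ on global sections. Conversely, given a superalgebra morphism $\psi\colon\sheaf(V)\to\sheaf(U)$, I would first recover the continuous map $\topo\phi$. The point here is that for a superdomain, the points of $\topo{V}\subseteq\R^m$ are in bijection with the graded maximal ideals of $\sheaf(V)$ (equivalently with $\R$-algebra homomorphisms $\sheaf(V)\to\R$, which factor through $\sheaf(\red V)=\Cinf(\topo V)$ and are evaluations by Milnor's exercise / the classical Gelfand-type argument for $\Cinf$). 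So $\psi$ induces a map $\topo\phi\colon\topo U\to\topo V$ by pulling back maximal ideals, and one checks continuity and smoothness of $\topo\phi$ using the reduced morphism $\red\psi\colon\Cinf(\topo V)\to\Cinf(\topo U)$ and the fact just cited that smooth-algebra morphisms between $\Cinf$ of manifolds are pullbacks along smooth maps. The remaining work is to promote $\psi$ from a global-sections map to an honest sheaf map $\phi^*\colon\sheaf_V\to\topo\phi_*\sheaf_U$; this uses that $\Cinf$-type structure sheaves are \emph{fine}/soft, so sections over an open set are determined by, and can be glued from, germs, and a global algebra morphism compatible with restriction to the distinguished open cover extends uniquely to a morphism of sheaves.

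For $(2)\Leftrightarrow(3)$: a superalgebra morphism $\psi\colon\sheaf(V)\to\sheaf(U)$ clearly determines the tuple $(s_i,t_j)=(\psi(x_i),\psi(\theta_j))$, where $x_1,\dots,x_m,\theta_1,\dots,\theta_n$ are the standard coordinates on $V\subseteq\R^{m|n}$; the parity constraints are automatic and the condition $(\red{s_1}(x),\dots,\red{s_m}(x))\in\topo V$ is exactly the statement that the induced map on reduced spaces lands in $\topo V$, which is forced because $\psi$ is well-defined (it must send the maximal ideal at a point into a maximal ideal). The substance is the converse: given such a tuple, I must construct $\psi$ and show it is the unique superalgebra morphism with $\psi(x_i)=s_i$, $\psi(\theta_j)=t_j$. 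Uniqueness plus existence together amount to a \textbf{superanalytic Taylor-expansion / substitution principle}: any $g\in\sheaf(V)=\Cinf(\topo V)\otimes\extn n$ can be written $g=\sum_{I} g_I(x_1,\dots,x_m)\,\theta^I$, and one \emph{defines} $\psi(g)=\sum_I g_I(s_1,\dots,s_m)\,t^I$, where $g_I(s_1,\dots,s_m)$ means: split each $s_i=\red{s_i}+\nil{s_i}$ into its reduced part (a smooth function on $\topo U$, composed with which makes sense precisely by the range condition) plus nilpotent part, and expand $g_I$ in a \emph{finite} Taylor series in the nilpotents,
\[
 g_I(s_1,\dots,s_m)=\sum_{\alpha}\frac{1}{\alpha!}\,(\partial^\alpha g_I)(\red{s_1},\dots,\red{s_m})\,\nil{s_1}^{\alpha_1}\cdots\nil{s_m}^{\alpha_m},
\]
the sum terminating because $\sheaf(U)$ has nilpotents of bounded order. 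One then checks this $\psi$ is $\R$-linear, multiplicative, and parity-preserving, and that any algebra morphism must agree with it on polynomials in $x_i$ and on $\theta_j$, hence everywhere by continuity/density of polynomials in the $\Cinf$ topology together with the fact that a superalgebra morphism is automatically local and continuous for the relevant topologies.

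The \textbf{main obstacle} is exactly this last point: showing that the formula for $\psi$ is well-defined and multiplicative, i.e. that ``substitute $s_i$ for $x_i$'' really does extend from polynomials to all of $\Cinf(\topo V)$ in a way compatible with products. This requires (a) the finite-Taylor-expansion argument above to even make sense of $g_I(s_1,\dots,s_m)$, and (b) an argument that the result is independent of choices and respects multiplication — the cleanest route is to verify multiplicativity on the dense subalgebra of polynomials and then invoke continuity of $\psi$ in the Fréchet topology on $\Cinf(\topo V)$, which in turn follows because any $\R$-algebra morphism $\Cinf(\topo V)\to\sheaf(U)_0$ is automatically continuous (composition with evaluations gives the reduced map, known to be a smooth pullback, and the nilpotent corrections are polynomial in finitely many derivatives). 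I would organize the proof so that this analytic core is isolated as the one nontrivial lemma; the rest is bookkeeping with the ideal of nilpotents $\sheaf[J]$ and the soft-sheaf gluing already recalled in the text.
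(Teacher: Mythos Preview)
The paper does not give its own proof; it simply refers to \cite[ch.~4]{Manin}. Your plan is the standard one (essentially what one finds in Manin, Le{\u\i}tes, or Varadarajan), and you have correctly isolated the analytic content in the finite Taylor-substitution formula, which is well-defined precisely because the nilpotent ideal of $\sheaf(U)$ has bounded nilpotency order.

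One step needs repair. For \emph{uniqueness} in $(3)\Rightarrow(2)$ you appeal to automatic continuity of an arbitrary superalgebra morphism $\psi\colon\sheaf(V)\to\sheaf(U)$, and justify it by saying that ``the nilpotent corrections are polynomial in finitely many derivatives.'' But that description of $\psi$ is exactly what you are trying to prove, so the argument is circular as written. (By contrast, your multiplicativity argument for the \emph{constructed} map $\psi'$ is fine: $\psi'$ is visibly continuous, being a finite sum of maps $f\mapsto(\partial^\alpha f\circ\topo{\phi})\cdot\nil{s}^\alpha$.) The clean fix avoids topology altogether. Set $D=\psi-\psi'$; then $D(fg)=\psi(f)D(g)+D(f)\psi'(g)$ and $D$ vanishes on the coordinates, hence on all polynomials. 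Using Hadamard's lemma at each base point $\topo{\phi}(u)$ one shows by induction on $k$ that $D\big(\Cinf(\topo{V})\big)$ lies in the $k$-th power of the nilpotent ideal of $\sheaf(U)$ for every $k$, hence $D=0$. This is the same mechanism the paper exploits later (lemma~\ref{lemma:polynomials} and prop.~\ref{prop:valori_assegnati}, and again in the last lemma of \S\ref{sec:diff_calc}) for morphisms into a super Weil algebra; there the role of the nilpotent ideal of $\sheaf(U)$ is played by~$\nil{A}$.
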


Any supermanifold morphism $M \to N$ is then uniquely determined by
a collection of local maps, once  atlases on $M$ and $N$ have been
fixed.  A morphism can hence be given by describing it in local
coordinates.

Since we are considering the smooth category a further
simplification occurs: we can assign a morphism between
supermanifolds by assigning the pullbacks of the global sections
(see \cite[\S~2.15]{Kostant}), i.~e.
\begin{equation} \label{eq:pullback_of_global_sections}
    \Hom_{\SMan}(M,N) \isom \Hom_{\SAlg} \big( \sheaf(N) ,
\sheaf(M) \big) \text{.}
\end{equation}

%The  essential point here is that, borrowing some terminology from
%algebraic geometry, smooth supermanifolds are an ``affine''
%category. By this we mean that the knowledge of the superalgebra of
%global sections allows us to fully reconstruct the supermanifold
%obtaining its structure sheaf by a localization
%procedure (see, for example, \cite{BBHR}).

\medskip

The theory of supermanifolds resembles very closely the classical
theory. One can, for example, define tangent bundles, vector fields
and the differential of a morphism similarly to the classical case.
For more details see \cite{Kostant, Leites, Manin, DM, Varadarajan}.

\medskip
%\subsection{The functor of points} \label{subsec:functorofpoints}

Due to the presence of nilpotent elements in the structure sheaf of
a supermanifold, supergeometry can also be equivalently and very
effectively studied using the language of \emph{functor of points},
a very useful tool in algebraic geometry.
%We briefly review it; the interested reader can consult \cite[\S~2.9]{DM} or
%\cite[\S~4.5]{Varadarajan}.

\medskip

Let us first fix some notation we will use throughout the paper. If
$\cat{A}$ and $\cat{B}$ are two categories,
$\funct{\cat{A}}{\cat{B}}$  denotes the category of functors between
$\cat{A}$ and $\cat{B}$ (notice that in general
$\funct{\cat{A}}{\cat{B}}$ won't have small hom-sets). Clearly, the
morphisms in $\funct{\cat{A}}{\cat{B}}$ are the natural
transformations. Moreover we denote by $\cat{A}\op$ the
\emph{opposite category} of $\cat{A}$, so that the category of
contravariant functors between $\cat{A}$ and $\cat{B}$ is identified
with $\cfunct{\cat{A}}{\cat{B}}$
%For more details we refer to
(see \cite{MacLane}).

\begin{definition}
Given a supermanifold $M$, we define its \emph{functor of
points}
\[
    \FOP{M} \colon \SMan\op \to \Sets, \qquad S \mapsto \fop{M}{S} \coloneqq
\Hom(S,M)
\]
as the functor from the opposite category of supermanifolds to the
category of sets defined on the morphisms as usual: $M(\phi)f=f
\circ \phi$,
%\[
%    \fop{M}{S} \coloneqq \Hom(S,M)
%\]
%and on the morphisms according to
%\[
%    \begin{aligned}
%        \fop{M}{\phi} \colon \fop{M}{S} &\to \fop{M}{T} \\
%        f &\mapsto f \circ \phi
%    \end{aligned}
%\]
where $\phi \colon T \to S$, $f \in M(S)$.
The elements in $\fop{M}{S}$ are also
called the \emph{$S$\nbd points} of $M$.
\end{definition}

Given two supermanifolds $M$ and $N$, Yoneda's lemma (a general
result valid for all categories with small hom-sets) establishes
a bijective correspondence
\begin{eqnarray*}
    \Hom_{\SMan}(M,N) & \longleftrightarrow &
    \Hom_{\cfunct{\SMan}{\Sets}} \big( \FOP{M}, \FOP{N} \big)
\end{eqnarray*}
between the morphisms $M \to N$ and the natural transformations
$\FOP{M} \to \FOP{N}$ (see \cite[ch.~3]{MacLane} or
\cite[ch.~6]{EH}). This allows us to view a morphism of
supermanifolds as a family of morphisms $\fop{M}{S} \to \fop{N}{S}$
depending functorially on the supermanifold $S$. In other words,
Yoneda's lemma provides a full and faithful immersion
\[
    \yon \colon \SMan \to \cfunct{\SMan}{\Sets}.
\]
%of $\SMan$ into $\cfunct{\SMan}{\Sets}$ that is full and faithful.
There are however objects in $\cfunct{\SMan}{\Sets}$ that do not
arise as the functors of points of a supermanifold. We say that a
functor $\cF \in \cfunct{\SMan}{\Sets}$ is \emph{representable} if
it is isomorphic to the functor of points of a supermanifold.

%\begin{observation}
%We first notice that for any supermanifold $M$ the set
%$\fop{M}{\R^{0|0}} = \Hom_\SMan(\R^{0|0},M) \isom \topo{M}$ as sets,
%since $\R^{0|0}$ is just a point.\bcomment{se c'\`{e} bisogno di spazio
%la parte successiva di questa osservazione forse si potrebbe anche
%togliere o riassumere molto. \\ Claudio: se c'e' bisogno di spazio
%sono per togliere tutta l'osservazione.} So the functor of points
%allows us to recover the set of the points of the topological space
%$\topo{M}$ underlying $M$. The knowledge of the set-theoretical
%points $\fop{M}{\R^{0|0}}$ however is far from enough to reconstruct
%the supermanifold $M$ and this is because of two distinct reasons:
%\begin{enumerate}
%    \item
%1. All the elements of $\fop{M}{\R^{0|0}}$ annihilate
%    the nilpotent part of the sheaf, so they give us no information on the odd
%    part of the structure sheaf.
%    \item
%2. The functor $\FOP{M}$ takes values in the category of sets,
%    hence $\fop{M}{\R^{0|0}}$ is just a set
%    and does not contain any information on the differentiable
%    structure of $M$, even in the classical setting.
%\end{enumerate}

%The supermanifold $M$ is then recaptured only from the knowledge of
%its $S$\nbd points, for \emph{all} the supermanifolds $S$.
%\end{observation}

We now want to recall a representability criterion, which allows to
single out, among all the functors from the category of
supermanifolds to sets, those that are representable,
%i.~e.\ those
%that are isomorphic to the  functor of points of a supermanifold
(see \cite[ch.~1]{DG}, \cite[A.13]{FLV} for more details).

\begin{theorem}[Representability criterion] \label{theor:representability}
A functor $\cF \colon \SMan\op \to \Sets$ is representable if and
only if: \\
%\begin{enumerate}
%    \item
1. $\cF$ is a sheaf, i.~e.\ it has the sheaf property; \\
%    \item
2. $\cF$ is covered by open supermanifold subfunctors $\set{\cU_i}$.
%\end{enumerate}
\end{theorem}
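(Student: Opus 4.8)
The plan is to establish the two directions of the criterion separately, following the standard pattern used for schemes in \cite{DG} and adapted to the smooth supergeometric setting.

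\medskip

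\noindent\textbf{Necessity.} First I would assume $\cF \isom \FOP{M}$ for some supermanifold $M$ and verify the two conditions. For (1), the sheaf property for $\FOP{M}$ is essentially a restatement of the fact that morphisms of supermanifolds (equivalently, by the Chart theorem \ref{th:morphisms} and \eqref{eq:pullback_of_global_sections}, superalgebra morphisms of structure sheaves) can be glued: given a supermanifold $S$ with an open cover $\{S_j\}$ and compatible $S_j$\nbd points of $M$, the underlying continuous maps glue and the pullback morphisms of sheaves glue because $\sheaf_S$ is itself a sheaf. For (2), I would produce the covering subfunctors from any atlas of $M$: if $\{V_i\}$ is an open cover of $\topo{M}$ by superdomains, set $\cU_i \coloneqq \FOP{V_i}$, which is an open subfunctor of $\FOP{M}$ (an $S$\nbd point $f$ lies in $\cU_i(S)$ iff $\topo{f}$ factors through $V_i$, an open condition on $\topo{S}$), and these cover $\FOP{M}$ because for any $f \colon S \to M$ the open sets $\topo{f}^{-1}(V_i)$ cover $\topo{S}$.

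\medskip

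\noindent\textbf{Sufficiency.} This is the substantive direction. Assume $\cF$ is a sheaf covered by open superdomain subfunctors $\{\cU_i\}$, each representable by a superdomain $U_i$, say $\cU_i \isom \FOP{U_i}$. The idea is to glue the $U_i$ into a supermanifold $M$ and check $\FOP{M} \isom \cF$. The overlaps $\cU_{ij} \coloneqq \cU_i \cap \cU_j$ are open subfunctors of both $\cU_i$ and $\cU_j$, hence (being open subfunctors of representable functors) representable by open subsupermanifolds $U_{ij} \subseteq U_i$ and $U_{ji} \subseteq U_j$; by Yoneda (the classical one, \ref{theor:representability} being about the supergeometric analogue) the identity $\cU_{ij} = \cU_{ji}$ yields isomorphisms of superdomains $\phi_{ij} \colon U_{ji} \to U_{ij}$ satisfying the cocycle condition on triple overlaps, again because the corresponding identities of subfunctors hold and Yoneda is faithful. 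One then glues the topological spaces $\topo{U_i}$ along the $\topo{\phi_{ij}}$ and the structure sheaves along the $\phi_{ij}^*$ to obtain a superspace $M$; since each $U_i$ is a superdomain, $M$ is locally isomorphic to $\R^{p|q}$ and hence a supermanifold. Finally, to see $\FOP{M} \isom \cF$: the open immersions $U_i \hookrightarrow M$ give $\FOP{U_i} = \cU_i \hookrightarrow \FOP{M}$ compatible on overlaps, so by the sheaf property of $\cF$ (applied to the cover $\{\cU_i\}$ of $\cF$ itself) these assemble into a natural transformation $\cF \to \FOP{M}$; conversely $\FOP{M}$ is covered by the $\cU_i$, so the same sheaf-gluing produces an inverse. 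Checking that these are mutually inverse reduces, on each piece, to the tautology $\cU_i \isom \FOP{U_i}$.

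\medskip

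\noindent\textbf{Main obstacle.} The routine-looking but genuinely delicate point is the gluing construction itself and the verification that it lands in $\SMan$: one must be careful that ``open subfunctor'' is the right notion (so that $\cU_{ij}$ is again representable by an \emph{open} subsupermanifold, not merely by some supermanifold admitting a monomorphism), and that the glued superspace inherits the local model from the $U_i$ and has the Hausdorff underlying space if that is part of the definition of supermanifold in use. The bookkeeping of the cocycle condition and the compatibility of the two natural transformations $\cF \rightleftarrows \FOP{M}$ on all overlaps is where essentially all the work lies; everything else is formal consequence of Yoneda's lemma and the sheaf axioms.
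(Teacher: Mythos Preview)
The paper does not actually prove this theorem: it is stated as a recalled result, with the proof deferred to the references \cite[ch.~1]{DG} and \cite[A.13]{FLV}. Your sketch is precisely the standard argument carried out in those references (Demazure--Gabriel for schemes, adapted to supermanifolds in \cite{FLV}), so there is nothing to compare against in the paper itself.

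Two small remarks on your write-up. First, the hypothesis says ``open supermanifold subfunctors'', not ``open superdomain subfunctors''; the $\cU_i$ need only be representable by supermanifolds, not necessarily by superdomains. This does not affect your gluing argument at all, but you should not add the extra assumption. Second, your parenthetical ``by Yoneda (the classical one, \ref{theor:representability} being about the supergeometric analogue)'' is confused: Yoneda's lemma is a purely categorical statement that already applies to $\SMan$ without modification, and theorem \ref{theor:representability} is not a ``supergeometric Yoneda'' but a representability criterion---a different kind of statement. Just invoke Yoneda's lemma directly. Otherwise your identification of the genuine content (that open subfunctors of representables are represented by open subsupermanifolds, and that the glued superspace is again a supermanifold, including the Hausdorff issue) is exactly right.
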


\section{Super Weil algebras and $A$\nbd points} \label{sec:SWA_A-points}

In this section we introduce the category $\SWA$ of super Weil
algebras. These are finite dimensional commutative superalgebras
with a nilpotent graded ideal of codimension one.  Super Weil
algebras are the basic ingredient in the definition of the
Weil--Berezin functor and the Schwarz embedding. The simplest
examples of super Weil algebras are {finite dimensional} Grassmann
algebras. These are the only super Weil algebras that can be
interpreted as algebras of global sections of supermanifolds, namely
$\R^{0|q}$.

%Given a supermanifold $M$, we want to define a functor
%$M_{(\blank)} \colon \SWA \to \Sets$ assigning to each super Weil
%algebra $A$ the set of $A$\nbd points $M_{A}$. If $A$ is a Grassmann
%algebra, the $A$\nbd points of $M$ are identified with the usual
%$\R^{0|q}$\nbd points in the functor of points language described in
%the previous section. Unfortunately this functor is not adequate to
%fully describe the supermanifold $M$; as we shall see at the end of
%this section, the arriving category needs to have an additional
%structure in order for $M_{(\blank)}$ to contain the same
%information as $M$.

%\subsection{Super Weil algebras} \label{subsec:SWA}

\medskip

We now define the category of \emph{super Weil algebras}. The
treatment follows closely that contained in \cite[\S~35]{KMS} for
the classical case.

\begin{definition}
We say that $A$ is a (real) \emph{super Weil algebra} if
it is a commutative unital superalgebra over $\R$ and \\
%\begin{enumerate}
%    \item
1. $\dim A < \infty$, \\
%    \item
2. $A=\R \oplus \nil{A}$,
%    \item
where $\nil{A}=\nil{A}_0 \oplus \nil{A}_1$ is a graded nilpotent ideal. \\
%\end{enumerate}
The category of super Weil algebras is denoted by $\SWA$.
The \emph{height} of $A$ is the lowest $r$ such that
$\nil{A}^{r+1}=0$ and the \emph{width} of $A$ is the dimension of
$\nil{A}/\nil{A}^2$.
Notice  that super Weil algebras are \emph{local superalgebras}, i.~e.\
they contain a unique maximal graded ideal.
\end{definition}

\begin{remark}
As a direct consequence of the definition, each super Weil algebra
has an associated short exact sequence:
\[
    0 \to \R \stackrel{j_A}{\to} A = \R \oplus \nil{A} \stackrel{\pr_A}{\to} A/\nil{A} \isom \R \to 0 \text{.}
\]
Clearly the sequence splits and each $a \in A$ can be written
uniquely as
%\[
$    a = \red{a} + \nil{a}$
%\]
with $\red{a} \in \R$ and $\nil{a} \in \nil{A}$.
\end{remark}

\begin{example}[Dual numbers and super dual numbers] \label{example:SDN}
The simplest
example of super Weil algebra in the classical setting is
$\R(x)=\R[x]/\langle x^2 \rangle$ the algebra of dual numbers. Here
$x$ is an even indeterminate. %which is nilpotent of degree two.
Similarly we have the super dual numbers: $\R(x,\theta) =
\R[x,\theta] / \langle x^2,x\theta,\theta^2 \rangle$ where $x$ and
$\theta$ are respectively even and odd indeterminates.
%\end{example}
%\begin{example}[Super dual numbers] \label{example:SDN}
%The simplest non trivial example of super Weil
%algebra\comment{oppure: ``The natural extension of the previous
%example''} in the super setting is $\R(x,\theta) = \R[x,\theta] /
%\langle x^2,x\theta,\theta^2 \rangle$ where $x$ and $\theta$ are
%respectively even and odd indeterminates.
\end{example}

\begin{example}[Grassmann algebras]
The polynomial algebra in $q$ odd variables $\ext{\theta_1,\dots,
\theta_q}$ is another example of super Weil algebra. {Finite
dimensional} Grassmann algebras are actually a full subcategory of
$\SWA$.
\end{example}

\begin{lemma}\label{lemma:SWA}
%Let $A \in \SAlg$ and let
%\[
Let    $\rspoly{k}{l} \coloneqq \R[x_1,\ldots,x_k]
\otimes \ext{\theta_1,\ldots,\theta_l}$
%\]
denote the superalgebra of real polynomials
in $k$ even and $l$ odd variables.
%If $A$ is finite dimensional,
The following are equivalent: \\
1. $A$ is a super Weil algebra;\\
2. $A \cong \sheaf_{\R^{p|q},0}/J$ for suitable $p,q$ and
$J$ graded ideal
containing a power of the maximal ideal $\maxid_0$ in the stalk
$\sheaf_{\R^{p|q},0}$;\\
3. $A \cong \rspoly{k}{l} /I$ for a suitable graded ideal $I
\supseteq
%containing a power of the maximal ideal
\langle x_1,\ldots,x_k, \theta_1, \ldots, \theta_l \rangle^k$.
%Moreover $J \supset \maxid^k$, where $\maxid$ is the
%maximal ideal in $\sheaf_{\R^{p|q},0}$.
%Every super Weil algebra is a
%Let $A$ be a super Weil algebra. Then we can write $A\cong
%\rspoly{k}{l}/I$ with $I$ a graded ideal
%of finite codimension and also
%$A \cong \sheaf_{\R^{p|q},0}/J$ with $J$ a graded ideal  of
%finite codimension.
%%or equivalently a quotient
%%quotient of $\rspoly{k}{l}$ by a graded ideal I
%%of finite codimension or equivalently a quotient
%%of $\sheaf_{\R^{p|q},0}/J$ by a graded ideal J of
%%finite codimension.
%Moreover $J \supset \maxid^k$, where $\maxid$ is the
%maximal ideal in $\sheaf_{\R^{p|q},0}$.
\end{lemma}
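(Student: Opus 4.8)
The plan is to prove the cycle of implications $(1)\Rightarrow(3)\Rightarrow(2)\Rightarrow(1)$; throughout, $\maxid_0$ denotes the maximal ideal of the stalk under consideration and $\maxid\coloneqq\langle x_1,\dots,x_k,\theta_1,\dots,\theta_l\rangle$ the (graded) maximal ideal of $\rspoly{k}{l}$, and all algebras are tacitly assumed nonzero so that the quotients below are by proper ideals. For $(1)\Rightarrow(3)$: let $A=\R\oplus\nil{A}$ be a super Weil algebra of height $r$, and choose homogeneous elements $a_1,\dots,a_c\in\nil{A}_0$ and $\alpha_1,\dots,\alpha_d\in\nil{A}_1$ whose classes form a basis of the graded vector space $\nil{A}/\nil{A}^2$. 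By the graded version of Nakayama's lemma these generate the ideal $\nil{A}$, hence generate $A$ as an $\R$\nbd algebra. Setting $k\coloneqq\max(c,\,r+1)$ and $l\coloneqq d$, one defines a morphism $\psi\colon\rspoly{k}{l}\to A$ in $\SAlg$ by $x_i\mapsto a_i$ for $i\le c$, $x_i\mapsto 0$ for $c<i\le k$, and $\theta_j\mapsto\alpha_j$; this is well defined because odd elements of the commutative superalgebra $A$ anticommute and square to zero, and it is surjective by the choice of generators. Its kernel $I$ is a graded ideal with $\rspoly{k}{l}/I\isom A$, and since $\psi(\maxid)=\nil{A}$ one gets $\psi(\maxid^k)=\nil{A}^k\subseteq\nil{A}^{r+1}=0$, i.e.\ $\maxid^k\subseteq I$, which is statement $(3)$.

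For $(3)\Rightarrow(2)$: given $A\isom\rspoly{k}{l}/I$ with $\maxid^k\subseteq I$, put $p\coloneqq k$, $q\coloneqq l$, and view $\rspoly{k}{l}$ as a subalgebra of the stalk $\sheaf_{\R^{k|l},0}=\Cinf_{\R^k,0}\otimes\ext{\theta_1,\dots,\theta_l}$ by regarding polynomials as germs of smooth functions. By Taylor's theorem (Hadamard's lemma) the truncation map induces, for every $N$, a superalgebra isomorphism $\sheaf_{\R^{k|l},0}/\maxid_0^N\isom\rspoly{k}{l}/\maxid^N$, and the extension of the polynomial ideal $\maxid^N$ to the stalk is precisely $\maxid_0^N$. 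Hence, taking $J\subseteq\sheaf_{\R^{k|l},0}$ to be the graded ideal generated by $I$, one has $\maxid_0^k\subseteq J$, and modulo $\maxid_0^k$ the isomorphism above carries $J$ onto the image of $I$; passing to the quotients yields $\sheaf_{\R^{k|l},0}/J\isom\rspoly{k}{l}/I\isom A$, with $J$ graded and containing the power $\maxid_0^k$ of the maximal ideal, which is statement $(2)$.

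For $(2)\Rightarrow(1)$: if $A\isom\sheaf_{\R^{p|q},0}/J$ with $J$ a proper graded ideal and $\maxid_0^N\subseteq J$, then $A$ is a quotient of $\sheaf_{\R^{p|q},0}/\maxid_0^N$, which is finite dimensional (a basis is given by the classes of the monomials $x^\alpha\theta^\beta$ with $|\alpha|+|\beta|<N$), so $\dim A<\infty$; $A$ is local with maximal graded ideal $\nil{A}\coloneqq\maxid_0/J$, which satisfies $\nil{A}^N=(\maxid_0^N+J)/J=0$; and $A/\nil{A}\isom\sheaf_{\R^{p|q},0}/\maxid_0\isom\R$, so the splitting of the associated short exact sequence gives $A=\R\oplus\nil{A}$, i.e.\ $A\in\SWA$. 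The only genuine inputs are Taylor's theorem for germs of smooth functions and the graded Nakayama lemma, both routine; the single point requiring care is the choice $k=\max(c,r+1)$ in $(1)\Rightarrow(3)$, since the number of even indeterminates must be taken at least $r+1$ so that $\maxid^k$ already dies in $A$, which in general forces one to use more even indeterminates than the minimal number of even algebra generators of $A$ and to send the surplus ones to zero.
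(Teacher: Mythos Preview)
Your proof is correct. The paper itself omits the argument entirely, saying only ``We leave this to the reader as an exercise,'' so there is no approach to compare against; your cycle $(1)\Rightarrow(3)\Rightarrow(2)\Rightarrow(1)$ supplies precisely the routine details the authors intended the reader to fill in, and your care with the choice $k=\max(c,r+1)$ correctly handles the slightly awkward feature of the statement that the exponent in condition~(3) is tied to the number of even indeterminates.
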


\begin{proof}
%The fact $A$ can be regarded as
%a quotient of either superalgebras
%is clear, while for the last statement see  35.4 in \cite{KMS}.
We leave this to the reader as an exercise.
\end{proof}

\begin{definition}
Let $M$ be a supermanifold and $A$ a super Weil algebra. We
define the \emph{set of $A$\nbd points} of $M$,
\[
    M_{A} \coloneqq \Hom_\SAlg (\sheaf(M), A)
\]
%\emph{set of $A$\nbd points near $x$} as
%\[
%    M_{A,x} \coloneqq \Hom_\SAlg (\stalk{M}{x}, A)
%\]
%and the \emph{set of $A$\nbd points} as
%\[
%    M_A \coloneqq \bigsqcup_{x \in \topo{M}} M_{A,x} \text{.}
%\]
%If $x_A \in M_{A,x}$, we call $\red{x_A} \coloneqq x$ the \emph{base
%point} of $x_A$.
\end{definition}

%\begin{observation}
%Notice that, since $\stalk{M}{x}$ is a local algebra, $M_{\R,x}$
%contains only the evaluation $\ev_x$ and hence $M_\R$ is identified
%with the set of topological points of $M$. Moreover, for each $A \in
%\SWA$ and $x_A \in M_A$, we have that
%\[
%    x_A = \ev_{\red{x_A}} + L
%\]
%where $\im(L) \subseteq \nil{A}$.
%\end{observation}

We can define the functor $M_{(\blank)} : \SWA \to \Sets$,
%\begin{equation} \label{eq:A-point_functor}
%    M_{(\blank)} \colon \SWA \to \Sets
%\end{equation}
on the objects as $A \mapsto M_A$
%\begin{align*}
%    A &\mapsto M_A \\
%\intertext{and on morphisms as}
and on morphisms as $\rho \mapsto \funcpt{\rho}$ with $\rho \in
\Hom_{\SAlg}(A,B)$ and
%\[
%    \begin{aligned}
%        \funcpt{\rho} \colon M_A &\to M_B \\
%        x_A &\mapsto \rho \circ x_A \text{.}
%    \end{aligned}
%\]
%\funcpt{\rho} \colon M_A \to M_B$,
$\funcpt{\rho} \colon x_A \mapsto \rho
\circ x_A$.

\begin{remark} \label{remark:localalg}
Observe that the only super Weil algebras which are equal to
$\sheaf(M)$ for some supermanifold $M$ are those of the form
$\ext{\theta_1,\dots,\theta_q} = \sheaf(\R^{0|q})$. In fact as
soon as $M$ has a nontrivial even part, the algebra $\sheaf(M)$
becomes infinite dimensional. For this reason this functor is quite
different from the functor of points introduced previously.
%borrowed from algebraic
%geometry and detailed in the previous section.
\end{remark}

%As we have remarked in the discussion following
%eq.~\ref{eq:pullback_of_global_sections}, the superalgebra of
%global sections of a (smooth) supermanifolds $M$ contains
%enough information to fully recover
%supermanifolds morphisms from their action on global sections themselves.
%In the next paragraph, we take advantage of this fact and we give
%an alternative characterization
%of the set of $A$\nbd points of the supemanifold $M$ (prop.\
%\ref{prop:smoothcase} below).

% We start with a  well known result.

Let us recall a well known classical result.

\begin{lemma}[``Super" Milnor's exercise]
Denote by $M$  a smooth supermanifold. The superalgebra maps
$\sheaf(M) \to \R$ are exactly the evaluations $\ev_x \colon s
\mapsto \red{s}(x)$ in the points $x \in \topo{M}$. In other words
there is a bijective correspondence between $M_\R =
\Hom_{\SAlg}\big(\sheaf(M),\R\big)$
and $\topo{M}$.
\end{lemma}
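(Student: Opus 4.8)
The plan is to reduce the statement to the classical Milnor exercise about smooth manifolds and then transfer it across the reduction functor. First I would observe that any superalgebra map $\chi \colon \sheaf(M) \to \R$ must kill the nilpotent ideal $\sheaf[J]_M(\topo{M})$: indeed $\R$ has no nonzero nilpotents, so $\chi$ factors through the quotient $\sheaf(M)/\sheaf[J]_M(\topo{M}) = \sheaf_{\red M}(\topo{M}) = \Cinf(\red M)$. In particular $\chi$ is determined by an algebra map $\Cinf(\red M) \to \R$, and conversely every such map pulls back to a superalgebra map $\sheaf(M) \to \R$ via the projection $s \mapsto \red s$. Thus the problem is equivalent to: the $\R$\nbd algebra maps $\Cinf(N) \to \R$, for $N$ a smooth manifold, are exactly the evaluations $\ev_x$, $x \in N$.

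Next I would prove this classical fact. One direction is trivial: each $\ev_x$ is clearly an algebra homomorphism. For the converse, let $\chi \colon \Cinf(N) \to \R$ be a nonzero algebra map; then $\ker \chi$ is a maximal ideal of codimension one. The key claim is that $\ker\chi$ has a common zero, i.~e.\ there is $x \in N$ with $f(x) = 0$ for all $f \in \ker\chi$ (equivalently $\chi = \ev_x$, since $f - \chi(f)\cdot 1 \in \ker\chi$ for every $f$). If no such common zero existed, then by a partition-of-unity / exhaustion argument one builds a strictly positive function in $\ker\chi$: cover $N$ by countably many relatively compact opens, on each pick finitely many functions in $\ker\chi$ whose squares sum to something positive there, and assemble these using a partition of unity into a single $g \in \ker\chi$ with $g > 0$ everywhere, so $g$ is a unit in $\Cinf(N)$ — contradicting $g \in \ker\chi \ne \Cinf(N)$. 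Finally, to see $x$ is unique, note two distinct points of a manifold are separated by a smooth function, so $\ev_x = \ev_y$ forces $x = y$; and to see that distinct points of $\topo M$ give distinct superalgebra maps one uses the same separation on $\red M$, which has underlying space $\topo M$.

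The main obstacle — really the only nontrivial point — is the common-zero claim for $\ker\chi$, which rests on $N$ being second countable (hence $\sigma$\nbd compact and paracompact). This is exactly where smoothness and the standing manifold hypotheses enter: one needs genuine partitions of unity subordinate to an exhaustion by relatively compact opens, together with the elementary remark that a finite set of functions vanishing simultaneously at a point can be replaced by the single sum of their squares. Once the common zero $x$ is produced, everything else is formal: $\chi(f) = \chi(f - f(x)) + f(x) = 0 + f(x) = \ev_x(f)$, and composing with the reduction map $\sheaf(M) \twoheadrightarrow \Cinf(\red M)$ transports the statement back to $\sheaf(M)$, giving the asserted bijection $M_\R \cong \topo M$, $\ev_x \leftrightarrow x$.
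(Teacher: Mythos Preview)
Your strategy is sound and takes a different route from the paper's. The paper does not argue Milnor's exercise at all: it simply invokes eq.~\eqref{eq:pullback_of_global_sections}, namely $\Hom_{\SAlg}\big(\sheaf(M),\R\big) \cong \Hom_{\SMan}(\R^{0|0},M)$, together with the chart theorem~\ref{th:morphisms}, and observes that a morphism $\R^{0|0} \to M$ is nothing but a point of $\topo{M}$ with pullback given by evaluation there. Your route---factor any superalgebra map $\chi$ through the reduction $\sheaf(M) \twoheadrightarrow \Cinf(\red{M})$ and then prove the classical Milnor exercise directly---is more self-contained; the global-sections identification the paper quotes from Kostant already conceals an argument of precisely this kind.

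There is, however, a genuine gap in your execution of the classical step. When you ``assemble using a partition of unity'' the local positive functions $h_i \in \ker\chi$ into $g = \sum_i \psi_i h_i$, each summand $\psi_i h_i$ lies in the ideal $\ker\chi$, but the locally finite \emph{infinite} sum need not: an abstract $\R$\nbd algebra homomorphism carries no continuity, so $\chi(g) = \sum_i \chi(\psi_i h_i)$ is unjustified. The standard repair reduces to a finite sum. Choose a smooth proper $\rho \colon \red{M} \to \R$; then $\rho - \chi(\rho) \in \ker\chi$ and its zero set $K = \rho^{-1}\big(\chi(\rho)\big)$ is compact and nonempty (were it empty, $\rho - \chi(\rho)$ would be nowhere zero, hence a unit lying in $\ker\chi$). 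By compactness of $K$ finitely many $g_1,\dots,g_k \in \ker\chi$ already give $\sum_j g_j^2 > 0$ on $K$, and then $h = (\rho-\chi(\rho))^2 + \sum_{j=1}^k g_j^2$ is a \emph{finite} sum in $\ker\chi$, strictly positive on all of $\red{M}$, hence a unit---the contradiction you need.
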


\begin{proof}
This is a simple consequence of the chart theorem \ref{th:morphisms}
and eq.\ \eqref{eq:pullback_of_global_sections}, considering that
$\sheaf(\R^{0|0}) = \R$ and the pullback of a morphism $\phi \colon
\R^{0|0} \to M$ is the evaluation at $\topo{\phi}(\R^0)$.
\end{proof}

Let $x_A \in M_A$. Due to the previous lemma, there exists a unique
point of $\topo{M}$, that we denote by $\red{x_A}$, such that
$\pr_{A} \circ x_A = \ev_{\red{x_A}}$, where $\pr_A$ is the
projection $A \to \R$. We thus have a map
\begin{equation} \label{eq:base_point}
    \begin{aligned}
        \Hom_\SAlg \big( \sheaf(M),A \big) &\to \Hom_\SAlg \big( \sheaf(M),\R \big) \isom \topo{M} \\
        x_A &\mapsto \pr_{A} \circ x_A = \ev_{\red{x_A}} \text{.}
    \end{aligned}
\end{equation}
We say that $\red{x_A}$ is the \emph{base
point} of $x_A$ or that $x_A$ is an $A$\nbd point \emph{near} $\red{x_A}$.
We denote with $M_{A,x}$ the set of $A$-points near $x\in\topo{M}$.

The next proposition asserts the local nature of the functor of the
$A$-points.

\begin{proposition}
Let $M$ be a smooth supermanifold. Let $s \in \sheaf(M)$ and let
$x_A\in \Hom_\SAlg\big (\sheaf(M), A \big)$. Assume that $s$ is
zero when restricted to a certain neighbourhood of $\red{x_A}$ (see
eq.\ \eqref{eq:base_point}). Then $x_A(s)=0$.
\end{proposition}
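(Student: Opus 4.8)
The plan is to reduce the global statement to a purely local computation by using a bump-function argument, exactly as one does for ordinary smooth manifolds, together with the ``Super'' Milnor lemma to control the base point. First I would let $x \coloneqq \red{x_A} \in \topo{M}$ be the base point of $x_A$, and let $V \subseteq \topo{M}$ be an open neighbourhood of $x$ on which $\restr{s}{V} = 0$. Choose a smaller open set $W$ with $x \in W$ and $\overline{W} \subseteq V$, and pick a smooth function $\chi \in \Cinf_{\topo{M}}(\topo{M})$ (viewed as an even global section of $\sheaf_M$ via the embedding $\red{M} \to M$, or simply as a global section, since $M$ is a supermanifold and admits partitions of unity) such that $\chi \equiv 1$ on a neighbourhood of $x$ and $\supp \chi \subseteq V$. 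Then $\chi s = 0$ as a global section of $\sheaf_M$, because on $V$ we have $s = 0$ and outside $\supp\chi$ we have $\chi = 0$, and these two open sets cover $\topo{M}$; hence by the sheaf axiom $\chi s = 0$ globally.

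Now apply the algebra morphism $x_A$: since $x_A$ is an $\R$\nbd algebra homomorphism, $0 = x_A(\chi s) = x_A(\chi)\, x_A(s)$ in $A$. The key point is that $x_A(\chi)$ is a \emph{unit} in $A$. Indeed, $A = \R \oplus \nil{A}$ is a local superalgebra with maximal ideal $\nil{A}$, and an element $a \in A$ is invertible if and only if $\red{a} = \pr_A(a) \neq 0$. By the definition of the base point, $\pr_A \circ x_A = \ev_x$, so $\pr_A(x_A(\chi)) = \ev_x(\chi) = \red{\chi}(x) = \chi(x) = 1 \neq 0$. Therefore $x_A(\chi) = 1 + \nil{a}$ for some $\nil{a} \in \nil{A}$, which is invertible (its inverse is the finite geometric series $\sum_{k\geq 0}(-\nil{a})^k$, finite since $\nil{A}$ is nilpotent). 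Multiplying $x_A(\chi)\,x_A(s) = 0$ by this inverse gives $x_A(s) = 0$, as desired.

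I do not anticipate a serious obstacle here; the argument is the standard ``functions vanishing near a point are killed by evaluation-type maps'' trick, adapted to the superalgebra setting. The only mild subtlety is making precise that a smooth bump function on $\topo{M}$ gives a genuine element of $\sheaf(M)_0$ — this follows because $\topo{M}$ is a (second-countable, Hausdorff) smooth manifold by the remarks preceding the Chart theorem, so it admits smooth partitions of unity, and the structure sheaf $\sheaf_M$ contains $\Cinf_{\topo{M}}$ as a subsheaf via the reduction $\sheaf_M \to \sheaf_M/\sheaf[J]_M$ splitting locally; alternatively one simply notes that in any chart $\Cinf(\R^p) \otimes \extn{q}$ the smooth functions sit inside, and these local pieces glue. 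A reader comfortable with the classical case will regard this as routine, so in the write-up I would state the bump-function step briefly and spend the bulk of the argument on the invertibility of $x_A(\chi)$, which is where the super Weil algebra structure ($A$ local, $\nil{A}$ nilpotent) is actually used.
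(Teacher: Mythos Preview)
Your argument is correct and is essentially the same as the paper's: both multiply $s$ by a bump section that equals $1$ near $\red{x_A}$ and has support inside the vanishing neighbourhood, then use that $x_A$ of the bump is a unit in $A$ because its reduction is $1$. The only cosmetic difference is that the paper takes the bump $t$ as a section on $U$ with $\supp(t)\subset U$ (implicitly extended by zero), whereas you construct $\chi$ directly as a global section and spell out the invertibility via the geometric series.
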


\begin{proof}
Suppose $U \ni \red{x_A}$ is such that $\restr{s}{U}=0$. Let $t \in
\sheaf_M(U)$ be such that $\supp(t) \subset U$ and $\restr{t}{V} =
1$, where the closure of $V$ is contained in $U$. Then
%\[
  $  0 = x_A(st) = x_A(s)x_A(t)$.
%\]
So $x_A(s)=0$, since $x_A(t)$ is invertible because of
$\ev_{\red{x_A}}(t) = 1$, where $\ev_{\red{x_A}}$ denotes the
evaluation at $\red{x_A}$.
\end{proof}

%\begin{proposition} \label{prop:smoothcase}
%If $M$ is a smooth supermanifold
%\begin{equation*}
%    M_A \isom \Hom_{\SAlg_\R} \big( \sheaf(M), A \big)
%\end{equation*}
%in a functorial way.
%\end{proposition}

%\begin{proof}
%Clearly each $x_A \in M_A$ can be identified with a superalgebra map
%$\sheaf(M) \to A$ by composing it with the natural map $\eta_x
%\colon \sheaf(M)\to \stalk{M}{x}$. Vice versa let $\psi \in
%\Hom_{\SAlg_\R} \big( \sheaf(M), A \big)$. In the smooth category, given
%a germ $\germ{s} \in \stalk{M}{\red{\psi}}$, there exists a global
%section $s \in \sheaf(M)$ such that $\eta_{\red{\psi}}(s) =
%\germ{s}$. Since the image of $s$ under $\psi$ depends only on the
%germs of $s$ at $\red{\psi}$, $\psi$ determines an element of
%$M_{A,x}$. In fact, let $s' \in \germ{s}$ and let $U$ be a
%neighbourhood of $\red{\psi}$ such that $\restr{s'}{U} =
%\restr{s}{U}$. It is always possible to find a smaller neighbourhood
%$V \subset U$ and $u,v_1,v_2 \in \sheaf(M)$ such that $s = u + v_1$,
%$s' = u + v_2$ and $\restr{v_i}{V} = 0$. Then, due to the previous
%lemma, $\psi(s) = \psi(u) = \psi(s')$. The functoriality is clear.
%\end{proof}

\begin{observation} \label{obs:stalk}
The above proposition shows that $x_A(s)$ depends only on the germ
of $s$ in $\red{x_A}$, i.~e.\ $x_A$ is also a superalgebra map from the
stalk $\stalk{M}{\red{x_A}}$ of $\sheaf_M$ in $\red{x_A}$ to $A$.
Therefore it is possible to give a meaning to $x_A([s])$
for a germ $[s]$ in $\stalk{M}{\red{x_A}}$.
%in a neighbourhood of the point $\red{x_A}$. One can then prove that
It is not hard to show that
$M_A \isom \bigsqcup_{x \in \topo{M}}
\Hom_\SAlg(\stalk{M}{x},A)$. This identification
allows to extend the definition
of the local functor of points to the category of holomorphic or
real analytic
supermanifolds. Many of the results we prove extend relatively
easily to the holomorphic (or real analytic)
category, but we shall not pursue this
point of view in the present paper.
\end{observation}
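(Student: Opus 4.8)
The plan is to construct explicit mutually inverse maps between $M_A$ and $\bigsqcup_{x \in \topo{M}} \Hom_\SAlg(\stalk{M}{x},A)$, using the base-point map of \eqref{eq:base_point} together with the proposition just proved. First I would take $x_A \in M_A$ and let $x \coloneqq \red{x_A}$ be its base point. By that proposition $x_A$ annihilates every global section vanishing on a neighbourhood of $x$; moreover on a smooth supermanifold every germ in $\stalk{M}{x}$ is the germ of a global section, since multiplying a local representative by a bump function that equals $1$ near $x$ and is supported in a chart produces a global section with the same germ at $x$. Hence the restriction morphism $\mathrm{res}_x \colon \sheaf(M) \to \stalk{M}{x}$ is a surjection of superalgebras whose kernel --- the ideal of global sections vanishing on some neighbourhood of $x$ --- is contained in $\ker x_A$, so $x_A$ factors uniquely as $x_A = \widehat{x_A} \circ \mathrm{res}_x$ with $\widehat{x_A} \in \Hom_\SAlg(\stalk{M}{x},A)$. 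I would set $\Phi(x_A) \coloneqq \widehat{x_A}$, placed in the component of the disjoint union indexed by $x$; this is just the factorization through the stalk already observed above.

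Conversely, given $x \in \topo{M}$ and $y \in \Hom_\SAlg(\stalk{M}{x},A)$, I would put $\Psi(x,y) \coloneqq y \circ \mathrm{res}_x \in M_A$. The compatibility to verify is that the base point of $\Psi(x,y)$ is again $x$, i.e.\ $\pr_A \circ y \circ \mathrm{res}_x = \ev_x$. Here I would invoke that $\stalk{M}{x}$ is a local superalgebra: the superalgebra morphism $\pr_A \circ y \colon \stalk{M}{x} \to \R$ is unital, hence surjective, so its kernel is a proper graded ideal of codimension one, which must therefore coincide with the unique maximal graded ideal $\maxid_x$. Thus $\pr_A \circ y$ is the canonical quotient $\stalk{M}{x} \to \stalk{M}{x}/\maxid_x \isom \R$, and precomposing with $\mathrm{res}_x$ sends $s$ to $\red{s}(x) = \ev_x(s)$, as required.

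It then remains to check that $\Phi$ and $\Psi$ are inverse to each other. We have $\Psi(\Phi(x_A)) = \widehat{x_A} \circ \mathrm{res}_x = x_A$ by the defining property of the factorization, and $\Phi(\Psi(x,y)) = (x,y)$ because $\Psi(x,y)$ has base point $x$ by the previous paragraph and, $\mathrm{res}_x$ being surjective, the unique map through which $y \circ \mathrm{res}_x$ factors is $y$ itself. Naturality in $A$ is immediate, since all the maps in sight are composition with fixed morphisms. The two steps that deserve care are the surjectivity of $\mathrm{res}_x$, which rests on the existence of bump functions and is precisely the point that would require a separate argument in the holomorphic or real analytic setting, and the identification $\ker(\pr_A \circ y) = \maxid_x$, where the locality of the stalk is essential; the rest is formal.
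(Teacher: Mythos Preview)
Your argument is correct and carefully written. The paper itself does not supply a proof of this observation at all: it simply asserts that $x_A$ factors through the stalk as an immediate consequence of the preceding proposition, and then says ``it is not hard to show'' the bijection $M_A \isom \bigsqcup_{x} \Hom_\SAlg(\stalk{M}{x},A)$ without further detail. So there is no paper proof to compare against; you have filled in exactly the natural argument, including the two points that genuinely need attention --- surjectivity of $\mathrm{res}_x$ via bump functions (which, as you note, is the smooth-specific ingredient) and the identification of $\ker(\pr_A \circ y)$ with $\maxid_x$ via locality of the stalk.
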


\begin{notation}
Here we introduce a multi-index notation that we will use in the
following. Let $\set{x_1,\ldots,x_p,\theta_1,\ldots,\theta_q}$ be a
system of coordinates. If $\nu = (\nu_1,\ldots,\nu_p) \in \N^p$, $J
= \set{j_1,\ldots,j_r} \subseteq \set{1,\ldots,q}$, with $1 \leq j_1
< \dots < j_r \leq q$, we define
%\[
$    x^\nu \coloneqq x_1^{\nu_1} x_2^{\nu_2} \cdots x_p^{\mu_p} \text{,}$
%    \qquad
$\theta^J \coloneqq \theta_{j_1} \theta_{j_2} \cdots
    \theta_{j_r}$.
%\]
Moreover  we set $\nu! \coloneqq \prod_i \nu_i!$,
 $\abs{\nu} \coloneqq \sum_i
\nu_i$ and $\abs{J}$ the cardinality of $J$.
\end{notation}

In order to obtain further information about the structure of $M_A$
we need some preparation. Next lemma gives some insight on the
structure of the stalk at a given point (for the proof see
\cite[\S~2.1.8]{Leites} or \cite[ch.~4]{Varadarajan}).

\begin{lemma}[Hadamard's lemma] \label{lemma:polynomials}
Let $M$ be supermanifold, $x \in \topo{M}$ and
$\set{x_i,\theta_j}$ is a system of coordinates in a neighborhood
$U$ of $x$. Denote by $\maxid_{U,x}$ the ideal of the sections in
$\sheaf_M(U)$ whose value at $x$ is zero.
%For each $\germ{s} \in \stalk{M}{x}$
For each $s \in \sheaf_M(U)$ and
$k \in \N$ there exists a polynomial $P$ in ${x_i}$ and ${\theta_j}$
such that $s-P \in \maxid_{U,x}^k$.
%\[
%    \germ{s} - P \in \maxid_U^k \text{.}
%\]
\end{lemma}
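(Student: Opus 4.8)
The plan is to prove Hadamard's lemma by reduction to the classical smooth case, exploiting the local model structure of supermanifolds and induction on $k$. First I would reduce to a superdomain: since $x \in \topo{M}$ and $\set{x_i,\theta_j}$ is a coordinate system on a neighbourhood $U$, we may work entirely inside $U$, and after translating we may assume $x$ corresponds to the origin $0 \in \R^p$. Then $\sheaf_M(U) = \Cinf_{\R^p}(U') \otimes \ext{\theta_1,\dots,\theta_q}$ for a suitable open $U' \subseteq \R^p$ containing $0$, and $\maxid_{U,x}$ is the ideal generated by $x_1,\dots,x_p,\theta_1,\dots,\theta_q$.

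Next I would write a general section $s \in \sheaf_M(U)$ in its Grassmann expansion $s = \sum_{J \subseteq \set{1,\dots,q}} f_J\, \theta^J$ with each $f_J \in \Cinf_{\R^p}(U')$, and treat each coefficient separately. For the purely classical piece, the standard Hadamard/Taylor argument gives, for any fixed integer $m$, a polynomial $Q_J$ in $x_1,\dots,x_p$ with $f_J - Q_J$ vanishing to order $m$ at the origin, i.e.\ $f_J - Q_J$ lies in the $m$-th power of the maximal ideal $\mathfrak{m}_0 \subseteq \Cinf_{\R^p}(U')$. Setting $P \coloneqq \sum_J Q_J\, \theta^J$, a polynomial in the $x_i$ and $\theta_j$, one checks that $s - P = \sum_J (f_J - Q_J)\theta^J$ lies in $\maxid_{U,x}^k$: indeed each summand is a product of an element of $\mathfrak{m}_0^m \otimes 1 \subseteq \maxid_{U,x}^m$ with $\theta^J \in \maxid_{U,x}^{|J|}$, so it lies in $\maxid_{U,x}^{m+|J|} \subseteq \maxid_{U,x}^m$, and choosing $m \ge k$ at the outset finishes the argument.

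The main point to get right — and really the only nontrivial input — is the classical Hadamard lemma for $\Cinf$ functions: that a smooth function vanishing together with all its derivatives up to order $m-1$ at a point lies in the $m$-th power of the maximal ideal at that point. This is the usual iterated integral-remainder form of Taylor's theorem, $f(x) = \sum_{|\nu| < m} \tfrac{\partial^\nu f(0)}{\nu!} x^\nu + \sum_{|\nu| = m} x^\nu g_\nu(x)$ with $g_\nu$ smooth, and I would simply cite it from \cite[\S~2.1.8]{Leites} or \cite[ch.~4]{Varadarajan} as indicated. The only mild subtlety is bookkeeping the grading: one must check that the polynomial $P$ produced is genuinely an element of $\sheaf_M(U)$ (clear, since monomials in $x_i,\theta_j$ live there) and that the degree filtration by powers of $\maxid_{U,x}$ interacts correctly with the tensor product decomposition — but this is exactly the observation that $\maxid_{U,x}^k$ is generated by degree-$k$ monomials in the $x_i$ and $\theta_j$, which follows from $\maxid_{U,x}$ being generated by $x_1,\dots,x_p,\theta_1,\dots,\theta_q$. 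No genuine obstacle arises; the lemma is essentially the classical statement with an extra finite sum over Grassmann monomials.
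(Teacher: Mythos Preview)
Your argument is correct: the reduction to a superdomain, the Grassmann expansion $s = \sum_J f_J\,\theta^J$, the application of the classical Taylor--Hadamard remainder to each coefficient $f_J$, and the reassembly into $P = \sum_J Q_J\,\theta^J$ all work as you describe. The paper itself gives no proof of this lemma at all --- it only refers the reader to \cite[\S~2.1.8]{Leites} and \cite[ch.~4]{Varadarajan}, the same sources you cite --- so there is nothing to compare against; your sketch is in fact more detailed than what the paper offers.
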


%\begin{proof}
% The proof  can be
%found in \cite[\S~2.1.8]{Leites} or
%\cite[ch.~4]{Varadarajan}.
%\end{proof}

As a consequence we have the following proposition.

\begin{proposition}
\label{prop:valori_assegnati}
%\begin{enumerate}
    %\item\label{it:v_ass_germs}
Each element $x_A$ of $M_A$ is
    determined by the images of
    a system of local coordinates around
    $\red{x_A}$. Conversely, given $x \in \topo{M}$, a system of
    local coordinates $\set{x_i}_{i=1}^p$,
    $\set{\theta_j}_{j=1}^q$ around $x$, and elements
    $\set{\X_i}_{i=1}^p$, $\set{\T_j}_{j=1}^q$, $\X_i \in
    A_0$, $\T_j \in A_1$,\footnote{%
        The reader should notice the difference between
        $\set{x_i,\theta_j}$ and $\set{\X_i,\T_j}$.
    } such that $\red{\X_i} = \red{x_i}(x)$, there
    exists a unique morphism $x_A \in \Hom_\SAlg(\sheaf(M),A)$ with
    $x_A({x_i}) = \X_i$, $x_A({\theta_j}) = \T_j$.
    %\begin{equation} \label{eq:valori_assegnati}
    %    \left\{
    %    \begin{aligned}
    %        x_A({x_i}) & = \X_i \\
    %        x_A({\theta_j}) & = \T_j \text{.}
    %    \end{aligned}
    %    \right.
    %\end{equation}

%    \item\label{it:v_ass_chart} If $U$ is a coordinate chart around $x$,
%    there is a bijective correspondence
%    \[
%        U_A = \bigsqcup_{x \in U}
%\Hom_\SAlg (\stalk{M}{x}, A) \to \Hom_\SAlg \big( \sheaf_M(U),A \big) \text{.}
%    \]
%\end{enumerate}
\end{proposition}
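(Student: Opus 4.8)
The plan is to prove the two assertions separately, both by reducing to the Chart Theorem~\ref{th:morphisms} via the local model.

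\textbf{Uniqueness.} Suppose $x_A, x'_A \in M_A$ agree on a system of local coordinates $\set{x_i}_{i=1}^p$, $\set{\theta_j}_{j=1}^q$ around a point $x \in \topo{M}$. First I would observe that by the ``super'' Milnor exercise together with eq.~\eqref{eq:base_point}, both $x_A$ and $x'_A$ have the same base point, namely $\red{x_A} = \ev_{\red{x_A}}^{-1}$ of the point determined by $\red{\X_i}$; in particular they factor through the stalk $\stalk{M}{x}$ (Observation~\ref{obs:stalk}). Now let $s \in \sheaf(M)$ and let $r$ be the height of $A$, so that $\nil{A}^{r+1} = 0$. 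By Hadamard's lemma~\ref{lemma:polynomials} applied in the chart $U$ around $x$ with $k = r+1$, there is a polynomial $P$ in the $x_i, \theta_j$ with $s - P \in \maxid_{U,x}^{r+1}$. Since $x_A$ (resp.\ $x'_A$) sends $\maxid_{U,x}$ into $\nil{A}$ — this is exactly the content of $\pr_A \circ x_A = \ev_{\red{x_A}}$ — it sends $\maxid_{U,x}^{r+1}$ into $\nil{A}^{r+1} = 0$. Hence $x_A(s) = x_A(P) = P(\X_1,\dots,\X_p,\T_1,\dots,\T_q) = x'_A(P) = x'_A(s)$, where the middle equalities use that $x_A$ and $x'_A$ are superalgebra morphisms agreeing on the coordinates. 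Thus $x_A = x'_A$.

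\textbf{Existence.} Given $x \in \topo{M}$, a coordinate chart $U \cong U' \subseteq \R^{p|q}$ around $x$, and elements $\X_i \in A_0$, $\T_j \in A_1$ with $\red{\X_i} = \red{x_i}(x)$, I would first build a morphism of superspaces $\phi \colon (\ast, A) \to U$, where $(\ast, A)$ denotes the one-point superspace with structure sheaf $A$ (this is a superspace since $A$ is local). Indeed, $(\ast,A)$ is not a supermanifold, but the correspondence (1)$\Leftrightarrow$(3) of Theorem~\ref{th:morphisms} between superspace morphisms and pullbacks of coordinates is purely algebraic once one knows the target is a superdomain: one must check that the prescribed data $(\X_1,\dots,\X_p,\T_1,\dots,\T_q)$ satisfies the admissibility condition $(\red{\X_1},\dots,\red{\X_p}) = (\red{x_1}(x),\dots,\red{x_p}(x)) \in \topo{U'}$, which holds by hypothesis. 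Composing the resulting superalgebra morphism $\sheaf_M(U) \to A$ with the restriction $\sheaf(M) \to \sheaf_M(U)$ gives the desired $x_A \in \Hom_\SAlg(\sheaf(M), A)$ with $x_A(x_i) = \X_i$ and $x_A(\theta_j) = \T_j$; that $x_A$ factors through $\sheaf_M(U)$, or equivalently depends only on germs at $x$, is guaranteed by the locality proposition and Observation~\ref{obs:stalk}, and one checks $\red{x_A} = x$ directly from $\red{\X_i} = \red{x_i}(x)$.

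\textbf{Main obstacle.} The delicate point is the existence half: strictly speaking the Chart Theorem is stated for morphisms \emph{between superdomains}, whereas here the source $(\ast, A)$ is a zero-dimensional local superspace that is generally not a supermanifold (Remark~\ref{remark:localalg}). I expect the cleanest route is to invoke Lemma~\ref{lemma:SWA}(3): write $A \cong \rspoly{k}{l}/I$ with $I \supseteq \langle x_1,\dots,x_k,\theta_1,\dots,\theta_l\rangle^k$, so $A$ is a quotient of the global sections $\sheaf(\R^{k|l})$ of a genuine superdomain; lift the $\X_i, \T_j$ arbitrarily to $\sheaf(\R^{k|l})_0, \sheaf(\R^{k|l})_1$; apply Theorem~\ref{th:morphisms} to get a genuine superdomain morphism $\R^{k|l} \supseteq W \to U$ (after shrinking to an open $W$ around the preimage of $x$, using the admissibility condition to ensure such $W$ exists); pull back to $\sheaf(M) \to \sheaf_M(W)$, and finally compose with evaluation $\sheaf_M(W) \to \sheaf_{\R^{k|l}}(W) \to A$ — but one must verify this composite is independent of all the choices (the lift and the ideal presentation), which follows from the uniqueness part already proved, since the composite is a superalgebra morphism sending the coordinates $x_i, \theta_j$ to $\X_i, \T_j$. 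Everything else is a routine bookkeeping of parities and of the $\red{(\,\cdot\,)}$ operation.
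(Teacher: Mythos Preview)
Your uniqueness argument is essentially identical to the paper's: both use Hadamard's lemma \ref{lemma:polynomials} to replace a section by a polynomial modulo $\maxid_{U,x}^{k}$, and both observe that $x_A(\maxid_{U,x}) \subseteq \nil{A}$ forces the remainder to die.

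Your existence argument, however, is genuinely different. The paper proceeds by \emph{writing down an explicit formula}: it defines $x_A(s)$ via the finite Taylor expansion
\[
    x_A(s)
    = \sum_{\nu,J}
    \frac{1}{\nu!}\, \frac{\partial^{\abs{\nu}} s_J}{\partial x^{\nu}} \bigg|_{(\red{\X_1},\ldots,\red{\X_p})} \nil{\X}^\nu \T^J
\]
and then checks directly that this is a superalgebra morphism. Your route instead presents $A$ as a quotient $\rspoly{k}{l}/I$ via Lemma~\ref{lemma:SWA}, lifts the data to sections over a genuine superdomain $\R^{k|l}$, applies the Chart theorem~\ref{th:morphisms} to obtain an honest supermanifold morphism $W \to U$, and finally composes with the quotient map to $A$; independence of the lifts is handled by your already-established uniqueness. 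This is correct (modulo a notational slip: your ``$\sheaf_M(W)$'' should read $\sheaf_{\R^{k|l}}(W)$, since $W$ lives in $\R^k$, not in $\topo{M}$), and it has the virtue of avoiding any computation. The trade-off is that the paper's explicit formula \eqref{eq:formaltaylor} is \emph{used again later}---notably in deriving eq.~\eqref{eq:nat_tr_from_morph} and throughout the proof of Lemma~\ref{lemma:A0_smooth_nat_tr}---so with your approach you would still need to extract it at some point, whereas the paper gets it for free as a by-product of the construction.
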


\begin{proof}
%Let us consider \itref{\ref{it:v_ass_germs}}.
Suppose that $x_A$ is given. We want to show that $x_A({x_i})$,
$x_A({\theta_j})$
%the images \alert{$x_A(\germ{x_i})$,
%$x_A(\germ{\theta_j})$} of the germs of local coordinates
determine $x_A$ completely. This follows noticing that \\
%\begin{itemize}
%    \item
1. the image of a polynomial section under $x_A$ is determined, \\
    %\item
2. there exists $k \in \N$ such that the kernel of $x_A$ contains
    $\maxid_{U,x}^k$ (see \ref{lemma:SWA}),
and using previous lemma. We now come to existence.
Suppose that the
images of the coordinates are fixed as in the hypothesis
%\gcomment{tolto: ``, suppose also that  eq.\
%\eqref{eq:valori_assegnati} is given''}
and let $s$ in $\sheaf_M(U)$. We define $x_A({s})$
through a formal Taylor expansion. More precisely let
%\[
$    s = \sum_{J \subseteq \set{1,\ldots,q}} s_J \theta^J$
%\]
%be a representative of $\germ{s}$ near $x$,
where the $s_J$ are smooth functions in $x_1,\ldots,x_p$. Define
\begin{equation} \label{eq:formaltaylor}
    x_A(s)
    = \sum_{\substack{\nu \in \N^p \\ J \subseteq \set{1,\ldots,q}}}
    \frac{1}{\nu!} \frac{\partial^{\abs{\nu}} s_J}{\partial x^{\nu}} \bigg|_{(\red{\X_1},\ldots,\red{\X_p})} \nil{\X}^\nu \T^J \text{.}
\end{equation}
This is the way in which the purely formal expression
\[
    s(x_A) = s(\red{\X_1}+\nil{\X_1},\dots,\red{\X_p}+\nil{\X_p},\T_1,\dots,\T_q)
\]
is usually understood. Eq.\ \eqref{eq:formaltaylor}  has only a
finite number of terms due to the nilpotency of the $\nil{\X_i}$ and
$\T_j$. %It is clear from eq.\ \eqref{eq:formaltaylor} that
%$x_{A}(s)$ does not depend on the chosen representative. Finally
$x_{A}$ is a superalgebra morphism as one can readily check.
\end{proof}

\begin{observation} \label{obs:coordinates}
Let $U$ be a chart in a supermanifold $M$ with local coordinates
$\set{x_i,\theta_j}$.
%By point \itref{2} of prop.\ \ref{prop:valori_assegnati}
We have an injective map
\[
%    \begin{aligned}
        U_A \to A_0^p \times A_1^q,\qquad
        x_A \mapsto (\X_1,\ldots,\X_p,\T_1,\ldots,\T_q) \coloneqq
            \big( x_A(x_1),\ldots, x_A(\theta_q) \big) \text{.}
%    \end{aligned}
\]
We can think of it heuristically as the assignment of $A$\nbd valued
coordinates $\set{\X_i,\T_j}$ on $U_A$. As we are going to see in
theorem \ref{theor:azerolinear} the components of the coordinates
$\set{\X_i,\T_j}$, given by $\pair{\smash{a^*_k}}{\X_i}$,
$\pair{\smash{a^*_k}}{\T_j}$ with respect to a basis $\set{a_k}$ of
$A$, are indeed the coordinates of a smooth manifold. The base point
$\red{x_A} \in U$ has coordinates $(\red{\X_1},\ldots,\red{\X_p})$.
In this language, if $\rho \colon A \to B$ is a super Weil algebra
morphism, the corresponding morphism $\funcpt{\rho} \colon M_A \to
M_B$ is ``locally'' given by
%\begin{equation} \label{eq:rhoX...Xrho}
$    \rho \times \dots \times \rho \colon A_0^p \times A_1^q \to
B_0^p \times B_1^q$.
%\end{equation}
This is well defined since $\funcpt{\rho}$ does not change the base
point.

If $M = \R^{p|q}$ we can also consider the slightly different
identification
\[
    \R^{p|q}_A \to (A \otimes \R^{p|q})_0, \qquad
    x_A \mapsto \textstyle\sum_i x_A(e_i^*) \otimes e_i
\]
where $\set{e_1,\ldots,e_{p+q}}$ denotes a homogeneous basis of
$\R^{p|q}$ and $\set{e_1^*,\ldots,e_{p+q}^*}$ its dual basis. Here a
little care is needed. In the literature
the name $\R^{p|q}$ is used for two  different objects:
it may indicate the super vector space $\R^{p|q}=\R^p \oplus \R^q$
or the superdomain $(\R^p,\Cinf_{\R^p} \otimes \extn{q})$. In the
previous equation the first $\R^{p|q}$ is viewed as a superdomain,
while the last as a super vector space. Likewise the $\set{e_i^*}$
are interpreted both as vectors and sections of $\sheaf(\R^{p|q})$.
As we shall see in section
\ref{sec:WB_functor-Sh_emb}, the
functor
%\[
$    A \mapsto (A \otimes \R^{p|q})_0$
%\]
recaptures all the information about the superdomain $\R^{p|q}$, so
that the two  different ways of looking at $\R^{p|q}$
become identified naturally.
%This result will hence establish a quite
%natural way to identify the two objects.
%With this
In such identification,
the superdomain morphism $\funcpt{\rho} \colon \R^{p|q}_A \to
\R^{p|q}_B$ corresponds to the super vector space morphism
%\[
$    \rho \otimes \id \colon (A \otimes \R^{p|q})_0 \to (B \otimes
\R^{p|q})_0$.
%\]
\end{observation}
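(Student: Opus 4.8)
The plan is to read off every assertion of the observation from Proposition~\ref{prop:valori_assegnati}, together with one elementary fact about super Weil algebra morphisms: a unital superalgebra map $\rho\in\Hom_\SAlg(A,B)$ carries nilpotents to nilpotents, so $\rho(\nil{A})\subseteq\nil{B}$ and hence $\pr_B\circ\rho=\pr_A$. Everything in the statement is then a bookkeeping consequence of these two inputs.

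\emph{The injective map and the bijection for $\R^{p|q}$.} Let $x_A\in U_A$. By the ``Super'' Milnor's exercise applied to the superdomain $U$ the base point lies in $\topo{U}$, and by \eqref{eq:base_point} one has $\red{\X_i}=\pr_A\bigl(x_A(x_i)\bigr)=\ev_{\red{x_A}}(x_i)=\red{x_i}(\red{x_A})$; thus the reductions $(\red{\X_1},\dots,\red{\X_p})$ are exactly the coordinates of $\red{x_A}$ in the chart $U$, and in particular they determine $\red{x_A}$. If two $A$-points of $U$ have the same images on $\set{x_i,\theta_j}$, they therefore share a base point and agree on a full coordinate system around it, so they coincide by Proposition~\ref{prop:valori_assegnati}; this is injectivity. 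When $M=\R^{p|q}$ the same map is also onto $A_0^p\times A_1^q$: for arbitrary $\X_i\in A_0$, $\T_j\in A_1$ the point $(\red{\X_1},\dots,\red{\X_p})\in\R^p$ is an admissible base point, the compatibility hypothesis of Proposition~\ref{prop:valori_assegnati} holds automatically, and the proposition yields the required $x_A$. Composing with the tautological isomorphism $A_0^p\times A_1^q\isom(A\otimes\R^{p|q})_0$ — under which the $k$-th slot corresponds to $(\blank)\otimes e_k$, and the linear function $e_k^*$ is the section $x_k$ for $k\leq p$ and $\theta_{k-p}$ for $k>p$ — produces the stated bijection $\R^{p|q}_A\to(A\otimes\R^{p|q})_0$, $x_A\mapsto\sum_i x_A(e_i^*)\otimes e_i$, which lands in the even part because $x_A$ preserves parity.

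\emph{Compatibility with $\funcpt{\rho}$.} From $\pr_B\circ\rho=\pr_A$ we get $\pr_B\circ(\rho\circ x_A)=\ev_{\red{x_A}}$, so $\funcpt{\rho}$ fixes base points; in particular it sends $U_A$ into $U_B$, which is precisely what makes the ``local'' description meaningful. Reading off coordinates, $(\rho\circ x_A)(x_i)=\rho(\X_i)$ and $(\rho\circ x_A)(\theta_j)=\rho(\T_j)$, i.e.\ $\funcpt{\rho}$ acts as $\rho\times\dots\times\rho$; transporting this through the isomorphism above gives
\[
    \funcpt{\rho}\Bigl(\sum_i x_A(e_i^*)\otimes e_i\Bigr)
    = \sum_i \rho\bigl(x_A(e_i^*)\bigr)\otimes e_i
    = (\rho\otimes\id)\Bigl(\sum_i x_A(e_i^*)\otimes e_i\Bigr),
\]
so $\funcpt{\rho}$ corresponds to $\rho\otimes\id$. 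The one claim I would not settle here — that the functor $A\mapsto(A\otimes\R^{p|q})_0$ recaptures the superdomain $\R^{p|q}$ — is the content of the Schwarz embedding and the representability theorem of section~\ref{sec:WB_functor-Sh_emb}, to which I would simply refer forward.

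I expect no genuine obstacle: the content is entirely in organizing the identifications correctly. The only point demanding care is exactly that organization — keeping the superdomain $\R^{p|q}$ distinct from the super vector space $\R^{p|q}$, and the $e_i^*$ viewed as sections distinct from the $e_i^*$ viewed as dual vectors, and checking at each step that the identification chosen is the natural one, so that it genuinely intertwines the maps $\funcpt{\rho}$ and $\rho\otimes\id$.
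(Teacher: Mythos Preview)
Your proposal is correct and follows exactly the approach the paper intends: the observation is stated in the paper without a separate proof, since every claim is an immediate consequence of Proposition~\ref{prop:valori_assegnati} (and the trivial fact $\pr_B\circ\rho=\pr_A$), and you have simply made that reasoning explicit. There is nothing to add.
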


%\subsection{Natural transformations between functors of $A$\nbd points}
%\label{subsec:A-point_nat_trans}

%In the previous subsection we have seen that, somehow
%mimicking the functor of points approach to supermanifolds,
As we have seen, we can associate to each supermanifold $M$ a
functor
%\[
%    \begin{aligned}
$M_{(\blank)} \colon \SWA \to \Sets$, $A \mapsto M_A$. %    \end{aligned}
%\]
Hence we have a functor:
%\begin{equation*}
$    \ber \colon \SMan \to \funct{\SWA}{\Sets}$.
%\end{equation*}
The natural question is whether $\ber$ is a
full and faithful embedding or not.
%In this subsection,
We are going to show that $\ber$ is not full, in other
words, there are many more natural transformations between
$M_{(\blank)}$ and $N_{(\blank)}$ than those coming from morphisms
from $M$ to $N$.
%We will show this by giving a
%simple example. Then, in prop.\ \ref{prop:formal_series}, we will
%see a characterization of the natural transformation between
%two superdomains.

\medskip

%Let us start our discussion.
We first want to show that the natural transformations $M_{(\blank)}
\to N_{(\blank)}$ arising from supermanifold morphisms $M\to N$ have
a very peculiar form. Indeed, a morphism $\phi \colon M \to N$ of
supermanifolds induces a natural transformation between the
corresponding functors of $A$\nbd points given by
\[
    \begin{aligned}
        \phi_A \colon M_A &\to N_A, \qquad
        x_A &\mapsto x_A \circ \phi^*
    \end{aligned}
\]
for all super Weil algebras $A$. Let $M = \R^{p|q}$ and $N =
\R^{m|n}$, and denote respectively by $\set{x_i,\theta_j}$ and
$\set{x'_k,\theta'_l}$ two systems of canonical
coordinates over them. With these assumptions, $\phi$ is determined
by the pullbacks of the coordinates of $N$, while the $A$\nbd point
$\phi_A(x_A)$ is determined by
\[
    (\X'_1,\ldots,\X'_{m},\T'_1,\ldots,\T'_{n}) \coloneqq
    \big( x_A \circ \phi^*(x'_1),\ldots,x_A \circ \phi^*(\theta'_{n}) \big) \in A_0^{m} \times A_1^{n} \text{.}
\]
If $(\X_1,\ldots,\X_p,\T_1,\ldots,\T_q)$ denote the images of the
coordinates of $M$ under $x_A$ ($\X_1 = x_A(x_1)$, etc.) and
$\phi^*(x'_k) = \sum_J s_{k,J} \theta^J \in \sheaf(\R^{p|q})_0$,
where the $s_{k,J}$ are functions on $\R^p$, then we have
\begin{equation} \label{eq:nat_tr_from_morph}
    \X'_k = x_A \circ \phi^*(x'_k)
    = \sum_{\substack{\nu \in \N^p \\ J \subseteq \set{1,\ldots,q}}}
    \frac{1}{\nu!} \frac{\partial^{\abs{\nu}} s_{k,J}}{\partial x^{\nu}} \bigg|_{(\red{\X_1},\ldots,\red{\X_p})} \nil{\X}^\nu \T^J
\end{equation}
and similarly for the odd coordinates (see prop.\
\ref{prop:valori_assegnati}). Notice that if we pursue the point of
view of observation \ref{obs:coordinates}, i.~e.\ if we consider
$\set{\X_i,\T_j}$ as $A$\nbd valued coordinates of $\R^{p|q}_A$,
this equation can be read as a coordinate expression for $\phi_A$.

Not all the natural transformations $M_{(\blank)} \to N_{(\blank)}$
arise in this way. This happens also for purely even manifolds, as
we see in the next example.

\begin{example} \label{exampla:counter-example_nat_tr}
Let $M$ and $N$ be two smooth manifolds and let $\phi \colon M \to
N$ be a map (smooth or not). The natural transformation
$\alpha_{(\blank)} \colon M_{(\blank)} \to N_{(\blank)}$,
$\alpha_A(x_A)=\ev_{\phi(\red{x_A})}$,
%\[
%    \begin{aligned}
%        \alpha_A \colon M_A &\to N_A \\
%        x_A &\mapsto \ev_{\phi(\red{x_A})}
%    \end{aligned}
%\]
is not of the form seen above, even if $\phi$ is assumed to be
smooth, while we still have $\phi=\alpha_\R$.
\end{example}

We end this section with a technical result,
essentially due to A. A. Voronov (see \cite{Voronov}),
characterizing all possible natural transformations between the
functors of $A$\nbd points of two superdomains, hence
also those not arising from supermanifold morphisms.

\begin{definition}
Let $U$ be an open subset of $\R^p$. We denote by
$\fseries{p}{q}(U)$ the unital commutative superalgebra of formal
series with $p$ even and $q$ odd generators and coefficients in the
algebra $\functions(U,\R)$ of arbitrary functions on $U$, i.~e.\
%\[
$    \fseries{p}{q}(U) \coloneqq$ $\functions(U,\R)[[X_1,\dots,X_p,
\Theta_1,\dots,\Theta_q]]$.
%\]
An element $F \in \fseries{p}{q}(U)$ is of the form
%\[
 $   F
    = \sum_{\substack{\nu \in {\N}^p \\ J  \subseteq \set{1,\ldots,q}}}
    f_{\nu,J} X^\nu \Theta^J$,
%\]
where $f_{\nu,J} \in \functions(U,\R)$ and $\set{X_i}$ and
$\set{\Theta_j}$ are even and odd generators. $\fseries{p}{q}(U)$ is
a graded algebra: $F$ is even (resp.\ odd) if $\abs{J}$ is even
(resp.\ odd) for each term of the sum.
\end{definition}

Let us introduce a partial order between super Weil algebras by
saying that $A' \preceq A$ if and only if $A'$ is a quotient of $A$.

\begin{lemma} \label{lemma:upper_bound}
The family of super Weil algebras is directed, i.~e., if $A_1$ and
$A_2$ are super Weil algebras, then there exists $A$ such that $A_i
\preceq A$.
\end{lemma}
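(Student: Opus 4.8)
The plan is to construct an explicit super Weil algebra $A$ dominating both $A_1$ and $A_2$ simultaneously, using part (3) of Lemma~\ref{lemma:SWA}. By that lemma, each $A_i$ can be written as $A_i \isom \rspoly{k_i}{l_i}/I_i$ for some graded ideal $I_i \supseteq \langle x_1,\ldots,x_{k_i},\theta_1,\ldots,\theta_{l_i}\rangle^{k_i}$. The natural candidate for a common upper bound is the tensor product $B \coloneqq A_1 \otimes_\R A_2$: it is a commutative unital superalgebra, finite dimensional (as the tensor product of two finite dimensional spaces), and it comes with superalgebra maps $A_1 \to B$, $a \mapsto a \otimes 1$, but these are \emph{not} surjective, so $B$ does not directly witness $A_i \preceq B$. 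However, $B$ itself is a super Weil algebra: writing $A_i = \R \oplus \nil{A_i}$ one checks $\nil{B} = \nil{A_1}\otimes A_2 + A_1 \otimes \nil{A_2}$ is a graded nilpotent ideal of codimension one, so the projections $B \to \R$ exist and $B = \R \oplus \nil{B}$. The fix for surjectivity is then to compose: the maps $A_1 \to B$ need to be turned into quotient maps, which they are not, so instead I would proceed slightly differently.

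The cleaner route is the following. Take $k = \max(k_1,k_2)$, $l = \max(l_1,l_2)$ and realize both $A_i$ as quotients of the \emph{same} polynomial superalgebra $\rspoly{k}{l}$: indeed if $A_i \isom \rspoly{k_i}{l_i}/I_i$ with $k_i \le k$, $l_i \le l$, then composing with the obvious quotient $\rspoly{k}{l} \to \rspoly{k_i}{l_i}$ (killing the extra generators) exhibits $A_i \isom \rspoly{k}{l}/I_i'$ for a graded ideal $I_i' \supseteq \langle x_1,\ldots,x_k,\theta_1,\ldots,\theta_l\rangle^{k}$ (the power condition survives, perhaps with $k$ replaced by $\max(k_1,k_2)$, which is still a fixed power of the maximal ideal). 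Now set $A \coloneqq \rspoly{k}{l}/(I_1' \cap I_2')$. Since $I_1' \cap I_2'$ is a graded ideal still containing $\langle x_1,\ldots,x_k,\theta_1,\ldots,\theta_l\rangle^{k}$, part (3) of Lemma~\ref{lemma:SWA} says $A$ is a super Weil algebra. And because $I_1' \cap I_2' \subseteq I_i'$, the identity on $\rspoly{k}{l}$ descends to surjective superalgebra maps $A = \rspoly{k}{l}/(I_1'\cap I_2') \twoheadrightarrow \rspoly{k}{l}/I_i' \isom A_i$, so $A_i \preceq A$ for $i = 1,2$, which is exactly what directedness requires.

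The only genuinely non-routine point is verifying that $I_1' \cap I_2'$ still contains a power of the maximal ideal of $\rspoly{k}{l}$ — but this is immediate since it contains $\langle x_1,\ldots,x_k,\theta_1,\ldots,\theta_l\rangle^{k}$ as soon as both $I_1'$ and $I_2'$ do, which we arranged above (taking the larger exponent if $k_1 \neq k_2$). Everything else — that a quotient of a polynomial superalgebra by a graded ideal containing a power of the maximal ideal is a super Weil algebra, that the induced maps are superalgebra morphisms, that they are surjective — is either Lemma~\ref{lemma:SWA} verbatim or a one-line check. So I expect no serious obstacle; the main care needed is bookkeeping the number of generators and the exponent so that Lemma~\ref{lemma:SWA}(3) applies cleanly to $A$.
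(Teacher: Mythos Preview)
Your argument is correct and is essentially the same as the paper's: realize both $A_i$ as quotients of a single ambient superalgebra and take $A$ to be the quotient by the intersection of the two ideals. The only cosmetic difference is that the paper invokes part~(2) of Lemma~\ref{lemma:SWA} (writing $A_i \isom \sheaf_{\R^{p|q},0}/J_i$ for a common $p,q$) rather than part~(3), and bounds the power of the maximal ideal via the heights $r_i$ of the $A_i$ instead of the generator count; otherwise the proofs coincide.
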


\begin{proof}
In view of lemma \ref{lemma:SWA}, choosing carefully $k, l \in \N$
and $J_1$ and $J_2$ ideals of $\sheaf_{\R^{p|q},0}$, we have $A_i
\isom \sheaf_{\R^{p|q},0} / J_i$.  If $r$ is the maximum between the
heights of $A_1$ and $A_2$, $\maxid_0^{r+1} \subseteq J_1 \cap J_2$.
So $A \isom \sheaf_{\R^{p|q},0} / (J_1 \cap J_2)$ and then it is a
super Weil algebra.
\end{proof}

\begin{proposition} \label{prop:formal_series}
Let $U$ and $V$ be two superdomains in $\R^{p|q}$ and $\R^{m|n}$
respectively. The set of natural transformations in
$\funct{\SWA}{\Sets}$ between $U_{(\blank)}$ and $V_{(\blank)}$ is
in bijective correspondence with the set of elements of the form
\[
    \F = (F_1,\ldots,F_{m+n}) \in \big( \fseries{p}{q}(\topo{U}) \big)_0^m \times \big( \fseries{p}{q}(\topo{U}) \big)_1^n
\]
such that, $F_k = \sum_{\nu,J} f^k_{\nu,J} X^\nu \Theta^J$,
%\begin{equation} \label{eq:restricions}
$\big( f^1_{0,\emptyset}(x),\ldots,f^m_{0,\emptyset}(x) \big)
\subseteq \topo{V}$, $\forall x \in \topo{U}$.
%\end{equation}
\end{proposition}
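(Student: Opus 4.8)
The plan is to set up the correspondence in both directions and check it is a bijection, using Proposition~\ref{prop:valori_assegnati} (evaluation of an $A$-point via formal Taylor expansion), Observation~\ref{obs:coordinates} (the $A$-valued coordinates), and Lemma~\ref{lemma:upper_bound} (directedness of $\SWA$) as the essential inputs. First I would fix canonical coordinates $\set{x_i,\theta_j}$ on $U$ and $\set{x'_k,\theta'_l}$ on $V$. Given $\F = (F_1,\dots,F_{m+n})$ as in the statement, I define, for each super Weil algebra $A$, a map $\F_A \colon U_A \to V_A$ by the rule of Observation~\ref{obs:coordinates}: an $A$-point $x_A \in U_A$ is sent to the $A$-point whose coordinates $(\X'_1,\dots,\X'_m,\T'_1,\dots,\T'_n)$ are obtained by substituting the $A$-valued coordinates $\X_i = x_A(x_i) \in A_0$, $\T_j = x_A(\theta_j) \in A_1$ into the formal series $F_k$. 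One must check this substitution makes sense: writing $\X_i = \red{\X_i} + \nil{\X_i}$, each monomial $X^\nu \Theta^J$ evaluates to $\nil{\X}^\nu \T^J$ times the value at $\red{\X} = \red{x_A}$ of the Taylor-type expansion of $f^k_{\nu,J}$ — and since $\nil{A}$ is nilpotent, only finitely many terms survive, exactly as in eq.~\eqref{eq:formaltaylor}. The base-point condition $(f^1_{0,\emptyset}(x),\dots,f^m_{0,\emptyset}(x)) \subseteq \topo{V}$ guarantees that the resulting tuple has its reduction in $\topo{V}$, so by Proposition~\ref{prop:valori_assegnati} it is indeed a well-defined element of $V_A$. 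Naturality in $A$ — i.e.\ compatibility with $\funcpt{\rho}$ for $\rho \colon A \to B$ — is immediate from the ``locally $\rho \times \dots \times \rho$'' description in Observation~\ref{obs:coordinates}, since $\rho$ is a superalgebra map and hence commutes with the polynomial/Taylor substitution.

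Conversely, given a natural transformation $\eta \colon U_{(\blank)} \to V_{(\blank)}$, I would recover $\F$ by a universal-element argument. For each $\nu, J$ I want to extract the coefficient function $f^k_{\nu,J} \in \functions(\topo{U},\R)$. The idea is: at a point $x \in \topo{U}$, choose a super Weil algebra $A$ large enough to ``see'' the monomial $X^\nu \Theta^J$ — concretely, take $A = \R[x_1,\dots,x_p|\theta_1,\dots,\theta_q]/I$ with $I$ a high power of the maximal ideal (Lemma~\ref{lemma:SWA}), and the $A$-point $x_A$ sending $x_i \mapsto \red{x_i}(x) + \text{(class of }x_i\text{)}$, $\theta_j \mapsto \text{(class of }\theta_j\text{)}$. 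Apply $\eta_A$ and read off the coefficient of $\nil{\X}^\nu \T^J$ in the $k$-th coordinate of $\eta_A(x_A)$; naturality across the quotient maps between such algebras (all comparable by Lemma~\ref{lemma:upper_bound}) shows this coefficient is independent of how large $A$ was chosen, and letting $x$ vary gives a well-defined function $f^k_{\nu,J}$ on $\topo{U}$. Assembling these produces a formal series tuple $\F$; the base-point constraint on $\F$ follows because $\eta$ does not move base points (being natural, $\eta_\R$ must land in $V_\R \isom \topo{V}$). That the two constructions are mutually inverse is then a formal check: $\F \mapsto \F_{(\blank)} \mapsto \F$ is clear by construction, and $\eta \mapsto \F^{\eta} \mapsto \F^{\eta}_{(\blank)} = \eta$ holds because both natural transformations agree on the ``generic'' $A$-points used above, and by Proposition~\ref{prop:valori_assegnati} together with directedness every $A$-point factors through such a generic one, so agreement there forces agreement everywhere.

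The main obstacle I expect is the reconstruction direction — specifically, showing that the coefficient functions $f^k_{\nu,J}$ are well-defined (independent of the auxiliary algebra $A$) and that distinct natural transformations yield distinct $\F$. This is where directedness of $\SWA$ (Lemma~\ref{lemma:upper_bound}) does the real work: one needs that any two of the ``probe'' algebras embed into a common one and that $\eta$ respects the connecting quotient morphisms, so that the extracted coefficients are coherent. A secondary subtlety is bookkeeping the even/odd parity: one must verify $F_k$ is even for $k \le m$ and odd for $k > m$, which comes down to the fact that $\eta_A$ sends $A$-points (whose even coordinates lie in $A_0$ and odd in $A_1$) to $A$-points of the same type, forcing each $f^k_{\nu,J}$ to vanish unless $|J|$ has the right parity. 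Everything else — naturality, the nilpotency argument cutting the series down to a finite sum, and the verification that $\F_A$ is a genuine map of $A$-points — is routine given Proposition~\ref{prop:valori_assegnati} and Observation~\ref{obs:coordinates}.
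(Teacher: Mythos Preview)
Your proposal is correct and mirrors the paper's proof closely: build $\F_A$ by substituting $(\nil{\X}_i,\T_j)$ for the formal variables and evaluating the coefficient functions at the base point, and recover $\F$ from a natural transformation by probing with the ``universal'' $A$-point in the truncated polynomial superalgebra $\rspoly{p}{q}/\maxid^s$, reading off coefficients and using naturality together with directedness (Lemma~\ref{lemma:upper_bound}) to ensure coherence. One wording caution: the $f^k_{\nu,J}$ are \emph{arbitrary} real-valued functions (not smooth), so in the forward direction they are simply evaluated at $\red{\X}$ --- no Taylor expansion of them is involved, and your reference to eq.~\eqref{eq:formaltaylor} is a slight misfire, though the substitution rule you actually write down is the correct one.
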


\begin{proof}
As above, $\R^{p|q}_A$ is identified with $A_0^p \times A_1^q$ and
consequently a map $\R^{p|q}_A \to \R^{m|n}_A$ consists of a list of
$m$ maps $A_0^p \times A_1^q \to A_0$ and $n$ maps $A_0^p \times
A_1^q \to A_1$. In the same way, $U_A$ is identified with $\topo{U}
\times \nil{A_0}^p \times A_1^q$.

Let $\F = (F_1,\ldots,F_{m+n})$ be as in the hypothesis. A formal
series $F_k$ determines a map $\topo{U} \times \nil{A_0}^p \times
A_1^q \subseteq A_0^p \times A_1^q \to A$ in a natural way, defining
\[
    F_k(\X_1,\ldots,\X_p,\T_1,\ldots,\T_q)
    \coloneqq \sum_{\substack{\nu \in \N^p \\ J \subseteq \set{1,\ldots,q}}}
    f^k_{\nu,J}(\red{\X_1},\ldots,\red{\X_p}) \nil{\X}^\nu \T^J \text{.}
\]
The parity of its image is the same as that of $F_k$. Then, in view
of the restrictions imposed on the first $m$, $F_k$ given by the
equation above, $\F$ determines a map $U_A \to V_A$ and, varying $A
\in \SWA$, a natural transformation $U_{(\blank)} \to V_{(\blank)}$,
as it is easily checked.

Let us now suppose now that $\alpha_{(\blank)} \colon U_{(\blank)}
\to V_{(\blank)}$ is a natural transformation. We will see that it
is determined by an unique $\F$ in the way just explained.

Let $A$ be a super Weil algebra of height $r$ and
\[
    x_A = (\red{\X_1} + \nil{\X_1},\ldots,\red{\X_p} + \nil{\X_p},\T_1,\ldots,\T_q)
    \in A_0^p \times A_1^q \isom \R^{p|q}_A
\]
with $\red{x_A} \in \topo{U}$. Let us consider the super Weil
algebra
\begin{equation} \label{eq:hatA}
    \hat{A} \coloneqq \big( \R[z_1,\ldots,z_p] \otimes \ext{\zeta_1,\ldots,\zeta_q} \big) / \maxid^{s}
\end{equation}
with $s > r$ ($\maxid$ is as usual the maximal ideal of polynomials
without constant term) and the $\hat{A}$\nbd point
%\[
$    y_{\red{x_A}} \coloneqq (\red{\X_1} + z_1,\ldots,\red{\X_1} +
z_p,\zeta_1,\ldots,\zeta_q)
    \in \hat{A}_0^p \times \hat{A}_1^q \isom \R^{p|q}_{\hat{A}}$.
%\]

A homomorphism between two super Weil algebras is clearly fixed by
the images of a set of generators, but this assignment must be
compatible with the relations between the generators. The following
assignment is possible due to the definition of $\hat{A}$. If
$\rho_{x_A} \colon \hat{A} \to A$ denotes the map
$\rho_{x_A}(z_i)=\nil{\X_i}$, $\rho_{x_A}(\zeta_j)= \T_j$,
%\[
%    \left\{
%    \begin{aligned}
%        z_i &\mapsto \nil{\X_i} \\
%        \zeta_j &\mapsto \T_j \text{,}
%    \end{aligned}
%    \right.
%\]
then clearly $\funcpt{\rho_{x_A}}(y_{\red{x_A}}) = x_A$.

Let ${(\alpha_{\hat{A}})}_k$ with  $1 \leq k \leq m+n$ be a
component of $\alpha_{\hat{A}}$, and let
%\[
$    {(\alpha_{\hat{A}})}_k(y_{\red{x_A}}) = \sum_{\nu,J}
a^k_{\nu,J}(\red{x_A}) z^\nu \zeta^J$
%\]
with $a^k_{\nu,J}(\red{x_A}) \in\R$ and $\big(
a^1_{0,\emptyset}(\red{x_A}),\ldots,a^m_{0,\emptyset}(\red{x_A})
\big) \in \topo{V}$; the sum is on $\abs{J}$ even (resp.\ odd), if
$k \leq m$ (resp.\ $k
> m$). Due to the functoriality of $\alpha_{(\blank)}$
\[
    {(\alpha_A)}_k (x_A)
    = {(\alpha_{A})}_k \circ \funcpt{\rho_{x_A}} (y_{\red{x_A}})
    = \rho_{x_A} \circ {(\alpha_{\hat{A}})}_k (y_{\red{x_A}})
    = \sum_{\nu,J} a^k_{\nu,J}(\red{x_A}) \nil{\X}^\nu \T^J \text{,}
\]
so that there exists a non unique $\F$ such that $\F(x_A) =
\alpha_A(x_A)$. Moreover $\F(x_{A'}) = \alpha_{A'}(x_{A'})$ for each
$A' \preceq A$ and $x_{A'} \in U_{A'}$ (it is sufficient to use the
projection $A \to A'$). If $\F'$ is another list of formal series
with this property, there exists a super Weil algebra $A''$ such
that $\F(x_{A''}) \neq \F'(x_{A''})$ for some $x_{A''} \in U_{A''}$.
Indeed if a component $F_k$ differs in $f^k_{\nu,J}$, it is
sufficient to consider $A'' \coloneqq \rspoly{p}{q} / \maxid^{s}$
with $s > \max \big( \abs{\nu},q \big)$.
\end{proof}

\section{The Weil--Berezin functor and the Schwarz embedding}
\label{sec:WB_functor-Sh_emb}

In the previous section we saw that the functor
%\[
    $\ber \colon \SMan \to \funct{\SWA}{\Sets}$
%\]
does not define a full and faithful embedding of $\SMan$ in
$\funct{\SWA}{\Sets}$. Roughly speaking, the root of such a
difficulty can be traced to the fact that the functor $\ber(M)
\colon \SWA \to \Sets$ looks only to the local structure of the
supermanifold $M$, hence it loses all the global information.
%For the functor of points as we described it in
%\ref{subsec:functorofpoints}, we obtain a full and faithfull embedding
%thanks to the Yoneda's lemma. If we try to reproduce its proof
%in this different setting, we see that the main obstacle is that
%$\id_M$ can no longer be seen as an $M$\nbd point of the
%supermanifold $M$ itself.
The following heuristic argument gives a hint on how we can overcome
such problem.

It is well known (see, for example, \cite[\S~1.7]{DM}) that if
$V=V_0\oplus V_1$ and $W=W_0\oplus W_1$ are super vector spaces, there
is a bijective correspondence between linear maps $V\to W$
and functorial families of $\Lambda_0$\nbd linear maps between
$(\Lambda \otimes V)_0$ and $(\Lambda \otimes W)_0$, for each
Grassmann algebra $\Lambda$. This result goes under the name of
\emph{even rule principle}. Since vector spaces are local models for
manifolds, the even rule principle seems to suggest that each $M_A$
should be endowed with a local structure of $A_0$\nbd module. This
vague idea is made precise with the introduction of the category
$\cAoMan$ of $A_0$\nbd smooth manifolds.

\begin{definition} \label{def:Az_man}
Fix an even commutative finite dimensional algebra $A_0$ and let $L$
be an $A_0$\nbd-module, finite dimensional
as a real vector space. Let $M$ be a manifold. An
\emph{$L$\nbd chart} on $M$ is a pair $(U,h)$ where $U$ is open in
$M$ and $h \colon U \to L$ is a diffeomorphism onto its image. $M$
is an \emph{$A_0$\nbd manifold} if it admits an $L$\nbd atlas. By
this we mean a family $\set{(U_i,h_i)}_{i \in \cA}$ where
$\set{U_i}$ is an open covering of $M$ and each $(U_i,h_i)$ is an
$L$\nbd chart, such that the differentials
\[
    d(h_i \circ h_j^{-1})_{h_j(x)} \colon T_{h_j(x)} (L) \isom L \to L \isom T_{h_i(x)} (L)
\]
are isomorphisms of $A_0$\nbd modules for all $i$, $j$ and $x \in
U_i \cap U_j$.

If $M$ and $N$ are $A_0$\nbd manifolds, a \emph{morphism} $\phi
\colon M \to N$ is a smooth map whose differential is $A_0$\nbd
linear at each point. We also say that such morphism is
\emph{$A_0$\nbd smooth}. We denote by $\AoMan$ the category of
$A_0$\nbd manifolds.

We define also the category  $\cAoMan$ in the following way. The
objects of  $\cAoMan$ are manifolds over generic finite dimensional
commutative algebras. The morphisms in the category are defined as
follows. Denote by $A_0$ and $B_0$  two commutative finite
dimensional algebras, and let $\rho \colon A_0\to B_0$ be  an
algebra morphism. Suppose  $M$ and $N$ are $A_0$ and $B_0$ manifolds
respectively, we say that a morphism $\phi \colon M \to N$ is
\emph{$\rho$\nbd smooth} if $\phi$ is smooth and
%\[
$    (d\phi)_x(a v) = \rho(a)(d\phi)_x(v)$
%\]
for each $x \in M$, $v \in T_x(M)$, and $a \in A_0$ (see \cite{Shurygin}
for more details).
\end{definition}

%For a more
%detailed discussion see for example \cite{Shurygin} and references
%therein.

The above definition is motivated by the following theorems. In
order to ease the exposition we first give the statements of the
results postponing their proofs to later.

\begin{theorem} \label{theor:azerolinear}
Let $M$ be a smooth  supermanifold, and let $A \in \SWA$.\\
%be a  super Weil algebra. \\
%\begin{enumerate}
%    \item
1. $M_A$ can be endowed with a unique $A_0$\nbd manifold structure
such
    that, for each open subsupermanifold $U$ of $M$ and $s \in \sheaf_M(U)$
    the map defined by $\hat{s} \colon U_A \to A$, $ x_A \mapsto x_A(s)$,
%    \[
%        \begin{aligned}
%            \hat{s} \colon U_A &\to A \\
%            x_A &\mapsto x_A(s)
%        \end{aligned}
%    \]
    is $A_0$\nbd smooth. \\
%    \item
2. If $\phi \colon M\to N$ is a supermanifold
    morphism, then $\phi_A \colon M_A \to N_A$,
$x_A \mapsto x_A \circ \phi^*$
%    \[
%    \begin{aligned}
%            \phi_A \colon M_A &\to N_A \\
%            x_A &\mapsto x_A \circ \phi^*
%        \end{aligned}
%    \]
    is an $A_0$\nbd smooth morphism. \\
%    \item
3. If $B$ is another super Weil algebra and $\rho \colon A\to B$ is
an
    algebra morphism, then $\funcpt{\rho} \colon M_A \to M_B$,
 $x_A \mapsto \rho \circ x_A$
%    \[
%        \begin{aligned}
%            \funcpt{\rho} \colon M_A &\to M_B \\
%            x_A &\mapsto \rho \circ x_A
%        \end{aligned}
%    \]
    is a $\restr{\rho}{A_0}$\nbd smooth map.
%\end{enumerate}
\end{theorem}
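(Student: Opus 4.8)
\textbf{Proof plan for Theorem \ref{theor:azerolinear}.}

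The plan is to build everything out of the local picture established by Proposition \ref{prop:valori_assegnati} and Observation \ref{obs:coordinates}, and then to check that the transition data are $A_0$-linear so that the local charts glue. First, for a superdomain $U\subseteq\R^{p|q}$ with coordinates $\set{x_i,\theta_j}$, Observation \ref{obs:coordinates} gives an injection $U_A\hookrightarrow A_0^p\times A_1^q$; its image is $\topo{U}\times\nil{A_0}^p\times A_1^q$, which is an open subset of the real vector space $A_0^p\times A_1^q$, and the latter is naturally a free $A_0$-module $L\coloneqq A_0^p\oplus(A_1\otimes\R^q)$, finite dimensional over $\R$. This is the candidate $L$-chart on $U_A$. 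For a general supermanifold $M$ one picks an atlas $\set{U_i}$; the sets $\set{(U_i)_A}$ cover $M_A$ because each $x_A\in M_A$ has a base point $\red{x_A}$ lying in some $U_i$, and by the local nature of the $A$-points (the proposition on restrictions, together with Observation \ref{obs:stalk}) $x_A$ factors through $\sheaf_M(U_i)$, i.e.\ $x_A\in(U_i)_A$. So $M_A=\bigcup_i (U_i)_A$ with each piece carrying an $L$-chart.

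Next I would verify the cocycle condition. On an overlap $U_i\cap U_j$ the change of coordinates in $M$ is a superdomain isomorphism, and by the formula \eqref{eq:nat_tr_from_morph} the induced map on $A$-points, read in the coordinates $\set{\X,\T}$, is given by formal Taylor expansions with $A_0^0$-coefficients; its differential at a point is exactly the Jacobian super-matrix of the coordinate change, evaluated and then "promoted" by the rule $\nil{\X}^\nu\T^J\mapsto$ multiplication, which is manifestly $A_0$-linear on $L$ (the odd block acts on $A_1\otimes\R^q$ by $A_0$-module maps since $A_0A_1\subseteq A_1$). Hence the differentials $d(h_i\circ h_j^{-1})$ are $A_0$-module isomorphisms, and $M_A$ is an honest $A_0$-manifold. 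Uniqueness of this structure follows from part 1's defining property: the functions $\hat{x_i},\hat{\theta_j}$ are required to be $A_0$-smooth, so the chart map $h_i=(\hat{x_1},\ldots,\hat{\theta_q})$ must be $A_0$-smooth with $A_0$-smooth inverse for any admissible structure, forcing it to agree with the one just constructed. That $\hat s$ is $A_0$-smooth for arbitrary $s\in\sheaf_M(U)$ is read off directly from \eqref{eq:formaltaylor}: in the chart, $\hat s$ is the finite sum $\sum_{\nu,J}\frac{1}{\nu!}(\partial^{\abs\nu}s_J/\partial x^\nu)(\red\X)\,\nil\X^\nu\T^J$, a smooth function of the real coordinates whose differential is visibly $A_0$-linear.

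For parts 2 and 3 the work is already done. A supermanifold morphism $\phi\colon M\to N$ reads locally via pullbacks of coordinates, so $\phi_A$ in local $L$-charts is again given by formal Taylor series \eqref{eq:nat_tr_from_morph}; its differential is the corresponding Jacobian super-matrix acting on $L$, which is $A_0$-linear by the same bookkeeping as above, so $\phi_A$ is $A_0$-smooth. For a Weil algebra morphism $\rho\colon A\to B$, Observation \ref{obs:coordinates} tells us $\funcpt{\rho}$ is locally $\rho\times\cdots\times\rho\colon A_0^p\times A_1^q\to B_0^p\times B_1^q$ (it preserves base points, so it maps $U_A$ into $U_B$ chart by chart); this is literally a $\restr{\rho}{A_0}$-linear map between the model modules, and being linear it equals its own differential, so it is $\restr{\rho}{A_0}$-smooth in the sense of Definition \ref{def:Az_man}.

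The one genuine obstacle is the gluing step: one must check carefully that the transition maps between the local $L$-charts are not merely smooth but have $A_0$-linear differentials everywhere on the overlaps, and that these $L$-charts are mutually compatible in the sense demanded by Definition \ref{def:Az_man}. This reduces to the observation that the promotion operation $f\mapsto\bigl(x_A\mapsto\sum_\nu\frac{1}{\nu!}(\partial^{\abs\nu}f/\partial x^\nu)(\red\X)\nil\X^\nu\bigr)$ commutes with composition and that its differential is always given by $A_0$-linear multiplication operators — essentially the chain rule together with the fact that $\nil{A}$ is an ideal. Once this is in hand, the uniqueness and the functorial statements 2--3 are formal consequences, and I would present those briefly.
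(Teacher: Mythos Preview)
Your proposal is correct and follows essentially the same route as the paper: build charts on $M_A$ from an atlas on $M$ via Observation \ref{obs:coordinates}, and verify that the induced transition maps and the maps in parts 2--3 have $A_0$-linear differentials by inspecting the Taylor-expansion formula \eqref{eq:nat_tr_from_morph}.

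The only organizational difference is that the paper isolates the computation you sketch (``the differential of a map of the form \eqref{eq:nat_tr_from_morph} is $A_0$-linear'') as one direction of a separate lemma (Lemma \ref{lemma:A0_smooth_nat_tr}), which also proves the converse: every $A_0$-smooth natural transformation between superdomain functors arises from a supermanifold morphism. The paper then invokes that lemma to dispatch all three parts in a few lines. Your inline argument only needs the easy direction, which suffices for Theorem \ref{theor:azerolinear}; the paper's packaging pays off later, since the harder converse direction of Lemma \ref{lemma:A0_smooth_nat_tr} is exactly what is needed for Theorem \ref{theor:full_and_faithful} (full faithfulness of the Schwarz embedding). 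So your approach is slightly more direct here but would require you to revisit and substantially extend the computation when you reach the representability/full-faithfulness statements.
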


The above theorem says that supermanifolds morphisms give rise to
morphisms in the $\AoMan$ category.  From this point of view the
next definition is quite natural.

\begin{definition} \label{def:Az_nat_transf}
We call $\dfunct{\SWA}{\cAoMan}$ the subcategory of
$\funct{\SWA}{\cAoMan}$ whose objects are the same and whose
morphisms $\alpha_{(\blank)}$ are the natural transformations $\cF
\to \cG$, with $\cF, \cG \colon \SWA \to \cAoMan$, such that
%\[
$    \alpha_A \colon \cF(A) \to \cG(A)$
%\]
is $A_0$\nbd smooth for each $A\in \SWA$.
\end{definition}

Theorem \ref{theor:azerolinear} allows us to give more structure to
the image category of the functor of $A$\nbd points. More
precisely we have  the following definition, which is the central
definition in our treatment of the local functor of points.

\begin{definition}
Let $M$ be a supermanifold. We define the \emph{Weil--Berezin
functor} of $M$ as
\begin{equation} \label{eq:WB_functor}
\begin{aligned}
%\begin{array}{c}
        M_{(\blank)} \colon \SWA \to \cAoMan, & \quad
        A \mapsto M_A \\
\end{aligned}
\end{equation}
and the \emph{Schwarz embedding} as
\begin{equation} \label{eq:WB_functor2}
\begin{aligned}
    \sch \colon \SMan \to \dfunct{\SWA}{\cAoMan}, & \quad
    M \mapsto M_{(\blank)} \text{.}
\end{aligned}
%\end{array}
\end{equation}
%Moreover we define the \emph{Schwarz embedding}
%\[
%    \begin{aligned}
%        \sch \colon \SMan &\to \dfunct{\SWA}{\cAoMan}, \quad
%        M &\mapsto M_{(\blank)} \text{.}
%    \end{aligned}
%\]
\end{definition}

We can now state one of the main results in this paper.

\begin{theorem} \label{theor:full_and_faithful}
$\sch$ is a full and faithful embedding, i.~e.\ if $M$ and $N$ are
two supermanifolds, and $M_{(\blank)}$ and $N_{(\blank)}$ their
Weil--Berezin functors, then
\[
    \Hom_\SMan(M,N) \isom \Hom_{ \dfunct{\SWA}{\cAoMan}}
    (M_{(\blank)},N_{(\blank)}) \text{.}
\]
\end{theorem}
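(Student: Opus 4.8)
The plan is to prove full faithfulness by reducing to the local statement already established in Proposition \ref{prop:formal_series}, and then cutting down the formal series using the $A_0$\nbd smoothness condition built into $\dfunct{\SWA}{\cAoMan}$. Faithfulness is the easy half: if $\phi,\psi\colon M\to N$ induce the same natural transformation, then in particular $\phi_\R=\psi_\R$ as maps $\topo M\to\topo N$, and more is true — testing against the super dual numbers and higher-order Weil algebras $\sheaf_{\R^{p|q},0}/\maxid_0^k$ recovers, by Proposition \ref{prop:valori_assegnati} and Hadamard's lemma \ref{lemma:polynomials}, the full jets of the pullbacks $\phi^*(x'_k)$, $\phi^*(\theta'_l)$ at every point; since smooth (indeed analytic-in-the-odd-variables) sections are determined by their jets along the reduced manifold, $\phi^*=\psi^*$, hence $\phi=\psi$ by \eqref{eq:pullback_of_global_sections}.

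For fullness, first reduce to the superdomain case. Given a natural transformation $\alpha_{(\blank)}\colon M_{(\blank)}\to N_{(\blank)}$ in $\dfunct{\SWA}{\cAoMan}$, fix atlases $\set{U_i}$ of $M$ and $\set{V_j}$ of $N$. Because $\alpha_\R$ is the underlying smooth map $\red\alpha\colon\topo M\to\topo N$ and $\alpha$ preserves base points, $\alpha$ restricts to natural transformations between the open subfunctors $(U_i)_{(\blank)}$ and $(V_j)_{(\blank)}$ over suitable refinements; if we can show each such local piece comes from a superdomain morphism $\phi_{ij}\colon U_i\to V_j$, then naturality of $\alpha$ forces these to agree on overlaps, and the chart theorem \ref{th:morphisms} glues them to a global $\phi\colon M\to N$ with $\phi_{(\blank)}=\alpha_{(\blank)}$. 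So it suffices to treat $M=U\subseteq\R^{p|q}$, $N=V\subseteq\R^{m|n}$.

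Now invoke Proposition \ref{prop:formal_series}: $\alpha_{(\blank)}$ corresponds to a tuple $\F=(F_1,\dots,F_{m+n})$ of formal power series with coefficients $f^k_{\nu,J}\in\functions(\topo U,\R)$, arbitrary functions a priori. The key point is that the extra datum — that each $\alpha_A\colon U_A\to V_A$ is $A_0$\nbd smooth in the sense of Definition \ref{def:Az_man}, compatibly for all $A$ — forces these coefficients to be \emph{smooth} functions of $x\in\topo U$. To see this I would test against the one-parameter families of Weil algebras $A=\R(\varepsilon)$ (dual numbers) and its higher analogues $\R[\varepsilon]/\langle\varepsilon^{k+1}\rangle$: $A_0$\nbd smoothness of $\alpha_A$ on $U_A\cong\topo U\times\nil{A_0}^p\times A_1^q$ says precisely that the map is differentiable in the $\topo U$ directions to all orders with the derivatives again given by (functorially compatible) formal series, which inductively identifies $\partial^\nu f^k_{\mu,J}/\partial x^\nu$ with the coefficients of those series — in particular all these partials exist and are continuous, so $f^k_{\nu,J}\in\Cinf(\topo U)$. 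Once the coefficients are smooth, the formal series $F_k$, truncated at order $r$ in the even generators (which is all that ever acts, by nilpotency), is exactly the Taylor-type expansion \eqref{eq:formaltaylor} of a genuine section $\phi^*(x'_k)=\sum_J s_{k,J}\theta^J\in\sheaf(U)_0$ with $s_{k,J}$ the coefficient functions read off from $\F$; the boundary condition $(f^1_{0,\emptyset}(x),\dots,f^m_{0,\emptyset}(x))\in\topo V$ is precisely condition (3) of the chart theorem, so these data define a superdomain morphism $\phi\colon U\to V$, and comparing \eqref{eq:nat_tr_from_morph} with the formula for $\F$ shows $\phi_{(\blank)}=\alpha_{(\blank)}$.

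The main obstacle is this last extraction step: showing that $A_0$\nbd smoothness across all super Weil algebras upgrades the a priori arbitrary coefficient functions $f^k_{\nu,J}$ to smooth ones. Everything else is bookkeeping — gluing via the chart theorem and the already-proven Proposition \ref{prop:formal_series} — but the smoothness upgrade is the whole reason the category $\cAoMan$ was introduced, and it must be argued carefully by feeding in dual-number-type Weil algebras and matching the differential condition of Definition \ref{def:Az_man} against the term-by-term structure of $\F$.
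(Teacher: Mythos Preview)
Your overall architecture matches the paper's: reduce to superdomains via gluing, invoke Proposition~\ref{prop:formal_series} to obtain the formal series $\F$, then use the $A_0$\nbd smoothness hypothesis to force $\F$ into the Taylor form \eqref{eq:nat_tr_from_morph}. The gap is in this last step, and it is precisely the step you flag as ``the main obstacle'' --- but you have misidentified what the obstacle actually is.

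You assert that $A_0$\nbd smoothness upgrades the arbitrary coefficients $f^k_{\nu,J}$ to smooth functions, and that ``once the coefficients are smooth, the formal series $F_k$ \dots\ is exactly the Taylor-type expansion \eqref{eq:formaltaylor} of a genuine section''. The second claim is false. Smoothness of the $f^k_{\nu,J}$ is necessary but nowhere near sufficient: a formal series $\sum_{\nu,J} f^k_{\nu,J} X^\nu\Theta^J$ with independently chosen smooth coefficients need not be the Taylor expansion of anything. (Take $p=1$, $q=0$, and $F = f_0 + f_1 X$ with $f_0,f_1\in\Cinf(\R)$ but $f_1\neq f_0'$; this defines a perfectly good natural transformation in $\funct{\SWA}{\cAoMan}$ that does not come from a morphism.) What must be extracted is the recursion
\[
\partial_i f^k_{\nu,J} \;=\; (\nu_i+1)\, f^k_{\nu+\delta_i,J},
\]
which forces $f^k_{\nu,J} = \tfrac{1}{\nu!}\partial^\nu f^k_{0,J}$ and hence identifies $\F$ with the expansion \eqref{eq:nat_tr_from_morph} of the section having $s_{k,J}=f^k_{0,J}$. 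This recursion does not come from mere smoothness; it comes from the $A_0$\nbd \emph{linearity} of the differential, which is the substantive part of Definition~\ref{def:Az_man} and the whole point of passing from $\funct{\SWA}{\cAoMan}$ to $\dfunct{\SWA}{\cAoMan}$. The paper (Lemma~\ref{lemma:A0_smooth_nat_tr}) carries this out explicitly: writing $\omega(\X_i)=(\alpha_A)_k(\ldots,\X_i,\ldots)$ and expanding $\omega(\X_i+\Y)-\omega(\X_i)$, the coefficient of $\red\Y$ involves $\partial_i f^k_{\nu,J}$ while the coefficient of $\nil\Y$ involves $(\nu_i+1)f^k_{\nu+\delta_i,J}$; the condition $d\omega_{\X_i}(\Y) = \Y\cdot d\omega_{\X_i}(1_A)$ then equates them. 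Your plan to test only against $\R[\varepsilon]/\langle\varepsilon^{k+1}\rangle$ in order to harvest smoothness misses this: you must exploit that multiplying a tangent vector by a nilpotent element of $A_0$ commutes with $d\alpha_A$, and then choose $A$ large enough (e.g.\ $\rspoly{p}{q}/\maxid^s$) to separate all the relevant monomials.
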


\begin{observation}
If we considered the bigger category $\funct{\SWA}{\cAoMan}$ instead
of $\dfunct{\SWA}{\cAoMan}$, the above theorem is no longer true. In
example \ref{exampla:counter-example_nat_tr} we examined a natural
transformation between functors from $\SWA$ to $\Sets$, which does
not come from a supermanifold morphism. If, in the same example,
$\phi$ is chosen to be smooth, we obtain a morphism in
$\funct{\SWA}{\cAoMan}$ that is not in $\dfunct{\SWA}{\cAoMan}$.
Indeed, it is not difficult to check that if $\pi_A \colon A \to A$
is given by $a \mapsto \red{a}$, then $\alpha_A$ (in the example) is
$\pi_{A_0}$\nbd linear.
\end{observation}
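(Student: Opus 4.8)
The plan is to take the natural transformation $\alpha_{(\blank)}$ of Example~\ref{exampla:counter-example_nat_tr}, now with $M$ and $N$ ordinary (purely even) manifolds and $\phi\colon M\to N$ chosen \emph{smooth}, and to verify two things: that $\alpha_{(\blank)}$ is a genuine morphism in the larger category $\funct{\SWA}{\cAoMan}$, and that it is not induced by any supermanifold morphism. Together these exhibit an element of $\Hom_{\funct{\SWA}{\cAoMan}}(M_{(\blank)},N_{(\blank)})$ outside the image of $\sch$, so the bijection of Theorem~\ref{theor:full_and_faithful} must break once $\dfunct{\SWA}{\cAoMan}$ is enlarged to $\funct{\SWA}{\cAoMan}$.

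Example~\ref{exampla:counter-example_nat_tr} already supplies most of what is needed: $\alpha_{(\blank)}$, $\alpha_A(x_A)=\ev_{\phi(\red{x_A})}$, is a natural transformation of the set-valued functors and, even for smooth $\phi$, is \emph{not} of the form \eqref{eq:nat_tr_from_morph}. Since by the discussion preceding that example every natural transformation induced by a supermanifold morphism does have that form, $\alpha_{(\blank)}$ comes from no morphism $M\to N$ (indeed $\alpha_\R=\phi$). It therefore only remains to promote $\alpha_{(\blank)}$ to a morphism in $\funct{\SWA}{\cAoMan}$ and to check that it fails the extra requirement of Definition~\ref{def:Az_nat_transf}.

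For the promotion I would work in local coordinates. As $M$ and $N$ are manifolds, Theorem~\ref{theor:azerolinear} makes $M_A$ and $N_A$ into $A_0$\nbd manifolds, identified in charts with open subsets of $A_0^p$ and $A_0^m$; in these charts $\alpha_A$ reads $(\X_1,\dots,\X_p)\mapsto\iota\bigl(\phi(\red{\X_1},\dots,\red{\X_p})\bigr)$, where $\iota\colon\R^m\to A_0^m$ is the unit inclusion. This is smooth as a map of real manifolds, being a composite of the $\R$\nbd linear projection $\red{\,}$, the smooth $\phi$, and the linear $\iota$. Writing $\pi_{A_0}\colon A_0\to A_0$, $a\mapsto\red{a}$, and differentiating while using that $\red{\,}\colon A_0\to\R$ is an algebra map and that $d\phi$ is $\R$\nbd linear, one obtains
\[
    (d\alpha_A)_{x_A}(a\,v)=\red{a}\,(d\alpha_A)_{x_A}(v)=\pi_{A_0}(a)\,(d\alpha_A)_{x_A}(v)
    \qquad(a\in A_0).
\]
Thus each $\alpha_A$ is $\pi_{A_0}$\nbd smooth, hence a morphism in $\cAoMan$; naturality is inherited from the set-valued case, so $\alpha_{(\blank)}\in\Hom_{\funct{\SWA}{\cAoMan}}(M_{(\blank)},N_{(\blank)})$.

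Finally I would exhibit a single $A$ for which $\alpha_A$ is not $A_0$\nbd smooth, proving $\alpha_{(\blank)}\notin\dfunct{\SWA}{\cAoMan}$. Taking $M=N=\R$, $\phi=\id_\R$, and $A$ the dual numbers $\R(x)$ (so $A=A_0$ and $\nil{A_0}=\R x\neq 0$), the map $\alpha_A\colon A_0\to A_0$ is exactly $a\mapsto\red{a}$, whose differential satisfies $(d\alpha_A)(x\cdot 1)=0$ but $x\cdot(d\alpha_A)(1)=x\neq 0$, so it is $\pi_{A_0}$\nbd linear yet not $A_0$\nbd linear. More generally the image of $(d\alpha_A)_{x_A}$ lies in $\R^m\subseteq A_0^m$, and multiplication by a nonzero element of $\nil{A_0}$ cannot annihilate a nonzero real, so any base point where $d\phi\neq 0$ obstructs $A_0$\nbd linearity whenever $\nil{A_0}\neq 0$. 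Hence $\alpha_{(\blank)}$ lies in $\funct{\SWA}{\cAoMan}$ but not in $\dfunct{\SWA}{\cAoMan}$, which is exactly what the observation claims. The one genuinely delicate step is the coordinate computation of $d\alpha_A$ and the verification that it is $\pi_{A_0}$\nbd linear rather than $A_0$\nbd linear; the rest is bookkeeping resting on the already-established failure of \eqref{eq:nat_tr_from_morph}.
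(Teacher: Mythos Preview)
Your proposal is correct and follows exactly the approach the paper sketches: the observation in the paper merely asserts that $\alpha_A$ is $\pi_{A_0}$\nbd linear and leaves the verification to the reader, and you have carried out precisely that verification, together with the complementary check that $\alpha_A$ fails to be $A_0$\nbd linear when $\nil{A_0}\neq 0$ and $d\phi\neq 0$. The local factorisation $\alpha_A=\iota\circ\phi\circ\red{\,}$ and the resulting computation of $(d\alpha_A)_{x_A}$ are the right way to see both facts at once, and your concrete instance with $M=N=\R$, $\phi=\id$, and $A=\R(x)$ is a clean witness that $\alpha_{(\blank)}\notin\dfunct{\SWA}{\cAoMan}$.
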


We now examine the proofs of theorems \ref{theor:azerolinear} and
\ref{theor:full_and_faithful}. First we need to prove theorem
\ref{theor:full_and_faithful} in the case of two superdomains $U$
and $V$ in $\R^{p|q}$ and $\R^{m|n}$ respectively (lemma
\ref{lemma:A0_smooth_nat_tr}). As usual, if $A$ is a super Weil
algebra, $U_A$ and $V_A$ are identified with $\topo{U} \times
\nil{A_0}^p \times A_1^q$ and $\topo{V} \times \nil{A_0}^{m} \times
A_1^{n}$ (see observation \ref{obs:coordinates}). Then they have a
natural structure of open subsets of $A_0$\nbd modules. Next
lemma is due to A. A. Voronov in \cite{Voronov} and it is the local
version of theorem \ref{theor:full_and_faithful}.

\begin{lemma} \label{lemma:A0_smooth_nat_tr}
A natural transformation $\alpha_{(\blank)} \colon U_{(\blank)} \to
V_{(\blank)}$ comes from a supermanifold morphism $U \to V$ if and
only if $\alpha_A \colon U_A \to V_A$ is $A_0$\nbd smooth for each
$A$.
\end{lemma}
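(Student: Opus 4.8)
The plan is to combine Proposition \ref{prop:formal_series}, which already provides a bijection between natural transformations $U_{(\blank)}\to V_{(\blank)}$ and lists of formal series $\F=(F_1,\ldots,F_{m+n})$, with the chart theorem \ref{th:morphisms}, which says that supermanifold morphisms $U\to V$ correspond to $(m|n)$-tuples $(s_1,\ldots,s_m,t_1,\ldots,t_n)\in\sheaf(U)_0^m\times\sheaf(U)_1^n$ satisfying a base-point constraint. So the entire content of the lemma is: the natural transformation $\alpha_{(\blank)}$ attached to $\F$ is $A_0$-smooth for every super Weil algebra $A$ if and only if each coefficient function $f^k_{\nu,J}$ appearing in $F_k$ is smooth and, moreover, the family of coefficients is the Taylor expansion of a genuine smooth section. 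In other words, I must show that $A_0$-smoothness of all the maps $\alpha_A$ forces the formal series $F_k$ to be the formal Taylor series (in the sense of eq.\ \eqref{eq:formaltaylor}) of some $s_k\in\sheaf(U)_0$ (resp.\ $t_l\in\sheaf(U)_1$), and conversely that a morphism of superdomains always induces an $A_0$-smooth $\alpha_A$ — the latter direction being essentially Theorem \ref{theor:azerolinear}(2) once we know \eqref{eq:nat_tr_from_morph}.

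\textbf{Key steps.} First I would dispose of the easy direction: if $\alpha_{(\blank)}=\phi_A$ for a superdomain morphism $\phi\colon U\to V$, then by \eqref{eq:nat_tr_from_morph} the component $(\alpha_A)_k$ is, in the coordinates of Observation \ref{obs:coordinates}, given by a finite sum $\sum_{\nu,J}\frac{1}{\nu!}\frac{\partial^{|\nu|}s_{k,J}}{\partial x^\nu}\big|_{\red{\X}}\nil{\X}^\nu\T^J$ with the $s_{k,J}$ smooth; differentiating this expression in the $A_0$-module coordinates $\langle a_r^*,\X_i\rangle,\langle a_r^*,\T_j\rangle$ and checking $A_0$-linearity of the differential is a routine computation (it is exactly the computation behind Theorem \ref{theor:azerolinear}), so $\alpha_A$ is $A_0$-smooth. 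For the hard direction, start from an $A_0$-smooth $\alpha_{(\blank)}$; by Proposition \ref{prop:formal_series} it is given by formal series $F_k=\sum f^k_{\nu,J}X^\nu\Theta^J$. The strategy is to extract smoothness of the $f^k_{\nu,J}$ and the Taylor-compatibility relations among them by feeding $\alpha_{(\blank)}$ carefully chosen test algebras and test points. Concretely: evaluating $\alpha_A$ on points $x_A$ with a single nonzero nilpotent even coordinate $\nil{\X_i}=\epsilon$ in the dual-numbers algebra $\R(x)$ (Example \ref{example:SDN}) recovers the functions $f^k_{e_i,\emptyset}$ as first partials; iterating with higher truncated polynomial algebras $\R[z]/\langle z^{s}\rangle$ recovers all the $f^k_{\nu,\emptyset}$ as higher partials of $f^k_{0,\emptyset}$; and the $A_0$-smoothness hypothesis, applied to these same families, is precisely what forces $f^k_{0,\emptyset}$ to be a smooth function of the base point with the $f^k_{\nu,\emptyset}$ equal to $\tfrac{1}{\nu!}\partial^{|\nu|}$ of it. The odd generators are handled the same way, with $\theta$-monomials read off the Grassmann directions. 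Having produced smooth $s_{k,J}\in\Cinf(\topo U)$ with $f^k_{\nu,J}=\tfrac1{\nu!}\partial^{|\nu|}_x s_{k,J}$, set $s_k=\sum_J s_{k,J}\theta^J\in\sheaf(U)_0$ (and likewise the $t_l$); the base-point condition in Proposition \ref{prop:formal_series} gives exactly the condition in the chart theorem \ref{th:morphisms}, so the $(m|n)$-tuple defines a morphism $\phi\colon U\to V$, and by construction $\phi_A=\alpha_A$ for all $A$ by the uniqueness clause in Proposition \ref{prop:formal_series}.

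\textbf{Main obstacle.} The crux is the claim that $A_0$-smoothness of the single real-valued map $\alpha_A$, for all super Weil $A$ simultaneously, promotes the bare coefficient functions $f^k_{\nu,J}\colon\topo U\to\R$ (which a priori are arbitrary functions) to smooth functions that moreover organize into honest Taylor expansions. The subtlety is that $A_0$-smoothness is a condition on the map $\alpha_A$ \emph{as a function of the whole $A_0$-module $U_A\cong\topo U\times\nil{A_0}^p\times A_1^q$}, i.e.\ jointly in the base-point variable and the nilpotent variables, whereas the naive Taylor-coefficient extraction only touches the nilpotent directions. So the real work is to set up, for each derivative order $N$, a super Weil algebra $A$ (a suitable truncated polynomial superalgebra, as in \eqref{eq:hatA}) and a curve or slice of $A_0$-points along which $A_0$-smoothness of $\alpha_A$ literally says "$\partial^N f^k_{0,\emptyset}$ exists, is continuous, and equals $\nu!\,f^k_{\nu,J}$." Once the bookkeeping of which algebra witnesses which derivative is in place, the smoothness and the Taylor-compatibility come out together; the rest is the translation through the chart theorem, which is formal. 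I expect the proof to be shorter than this description suggests because much of the machinery (the identifications $U_A\cong\topo U\times\nil{A_0}^p\times A_1^q$, the formal-series bijection, the $\hat A$ trick) is already available from the preceding propositions.
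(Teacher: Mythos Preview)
Your plan is sound and matches the paper's overall strategy: both start from Proposition~\ref{prop:formal_series} to describe $\alpha_{(\blank)}$ by formal series $F_k=\sum_{\nu,J}f^k_{\nu,J}X^\nu\Theta^J$, and both reduce the hard direction to showing that $A_0$\nbd smoothness forces the Taylor recursion $\partial_i f^k_{\nu,J}=(\nu_i+1)f^k_{\nu+\delta_i,J}$ (together with smoothness of the $f^k_{\nu,J}$), after which the chart theorem finishes the job.

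The difference is in how that recursion is extracted. You propose to climb order by order: feed dual numbers, then $\R[z]/\langle z^s\rangle$, etc., and read off one layer of derivatives at a time, with the $A_0$\nbd linearity of the differential supplying the link between successive coefficients. The paper instead does a single computation valid in an arbitrary $A$: fixing one even coordinate $\X_i$ and writing $\omega(\X_i)=\sum_{t\ge 0}a_t(\red{\X_i})\,\nil{\X_i}^{\,t}$, it expands $\omega(\X_i+\Y)-\omega(\X_i)$ in two ways --- once as $(\red{\Y}+\nil{\Y})\,d\omega_{\X_i}(1_A)+o(\Y)$ by $A_0$\nbd linearity of the differential, and once by direct Taylor expansion in $\red{\Y}$ and binomial expansion in $\nil{\Y}$ --- and equates the $\red{\Y}$- and $\nil{\Y}$-parts to obtain $a_t'=(t+1)a_{t+1}$ for all $t$ simultaneously. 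Specializing to $A=\rspoly{p}{q}/\maxid^s$ with $s$ large then turns this into the coefficient identity for every $(\nu,J)$. This single-shot comparison is precisely what dissolves the ``main obstacle'' you identified, and it avoids the inductive bookkeeping of your test-algebra scheme; your route would also succeed, but the paper's is shorter and makes the role of $A_0$\nbd linearity (as opposed to mere smoothness) completely transparent.
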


\begin{proof}
Due to prop.\ \ref{prop:formal_series} we know that
$\alpha_{(\blank)}$ is determined by $m$ even and $n$ odd formal
series of the form $F_k = \sum_{\nu,J} f^k_{\nu,J} X^\nu \Theta^J$
with $f^k_{\nu,J}$ arbitrary functions in $p$ variables satisfying
suitable conditions. Moreover as we have
seen in the discussion before example
\ref{exampla:counter-example_nat_tr} a supermanifold morphism $\phi
\colon U \to V$ gives rise to a natural transformation $\phi_A
\colon U_A \to V_A$ whose components are of the form of eq.\
\eqref{eq:nat_tr_from_morph}. Let us suppose that $\alpha_A$ is
$A_0$\nbd smooth. This clearly happens if and only if all its
components are $A_0$\nbd smooth and the smoothness request for all
$A$ forces all coefficients $f^k_{\nu,J}$ to be smooth. Let
${(\alpha_A)}_k$ be the $k$\nbd th component of $\alpha_A$ and let
$i \in \set{1,\ldots,p}$. We want to study
%\[
%    \begin{aligned}
$        \omega \colon A_0 \to A_j$, $
\omega(\X_i)\coloneqq{(\alpha_A)}_k(\X_1,\ldots,\X_i,\ldots,
\X_p,\T_1,\ldots,\T_q)$,
%    \end{aligned}
%\]
supposing the other coordinates fixed ($j = 0$ if $1 \leq k \leq p$
or $j = 1$ if $p < k \leq p+q$). Since $\nil{\X_i} \in A_0$ commutes
with all elements of $A$,
\begin{equation} \label{eq:omega}
    \omega(\X_i) = \sum_{t \geq 0} a_t(\red{\X_i}) \nil{\X_i}^t, \quad
%\end{equation}
%with
%\begin{equation} %\label{eq:a_t}
    a_t(\red{\X_i}) \coloneqq \sum_{\substack{\nu,J \\ \nu_i=t}} f^k_{\nu,J}(\red{\X_1},\ldots,\red{\X_i},\ldots,\red{\X_p}) \nil{\X}^{(\nu-t\delta_i)} \T^J
\end{equation}
($t\delta_i$ is the element of $\N^p$ with $t$ at the $i$\nbd th
component and $0$ elsewhere). If $\Y = \red{\Y} + \nil{\Y} \in A_0$
and $\omega$ is $A_0$\nbd smooth
\begin{equation} \label{eq:Alin_diff_1}
    \omega(\X_i + \Y) - \omega(\X_i) = d\omega_{\X_i}(\Y) + o(\Y) = (\red{\Y} + \nil{\Y}) d\omega_{\X_i}(1_A) + o(\Y)
\end{equation}
(where $1_A$ is the unit of $A$). On the other hand, from eq.\
\eqref{eq:omega} and defining
\begin{equation} \label{eq:a'_t}
    a_t'(\red{\X_i}) \coloneqq \sum_{\substack{\nu,J \\ \nu_i=t}}
    \partial_i f^k_{\nu,J}(\red{\X_1},\ldots,\red{\X_i},\ldots,\red{\X_p}) \nil{\X}^{(\nu-t\delta_i)} \T^J
\end{equation}
($\partial_i$ denotes the partial derivative respect to the $i$\nbd
th variable), we have
\begin{equation} \label{eq:Alin_diff_2}
    \begin{aligned}
        \omega(\X_i + \Y) - \omega(\X_i)
        &= \sum_{t \geq 0} a_t (\red{\X_i} + \red{\Y}) (\nil{\X_i} + \nil{\Y})^t - \sum_{t \geq 0} a_t(\red{\X_i}) \nil{\X_i}^t \\
        &= \sum_{t \geq 0} \left( a_t'(\red{\X_i}) \red{\Y} \nil{\X_i}^t +
        a_t(\red{\X_i}) t \nil{\X_i}^{t-1} \nil{\Y} + o(\Y) \right) \\
        &= \red{\Y} \sum_{t \geq 0} a_t'(\red{\X_i}) \nil{\X_i}^t + \nil{\Y} \sum_{t \geq 0} (t+1) a_{t+1}(\red{\X_i}) \nil{\X_i}^t + o(\Y) \text{.}
    \end{aligned}
\end{equation}
Thus, comparing eq.\ \eqref{eq:Alin_diff_1} and
\eqref{eq:Alin_diff_2}, we get that the identity
\[
    (\red{\Y} + \nil{\Y}) d\omega_{\X_i}(1_A)
    = \red{\Y} \sum_{t \geq 0} a_t'(\red{\X_i}) \nil{\X_i}^t + \nil{\Y} \sum_{t \geq 0} (t+1) a_{t+1}(\red{\X_i}) \nil{\X_i}^t
\]
must hold and, consequently, also the following relations must be
satisfied:
\begin{align*}
    \sum_{t \geq 0} a_t'(\red{\X_i}) \nil{\X_i}^t
    &= \sum_{t \geq 0} (t+1) a_{t+1}(\red{\X_i}) \nil{\X_i}^t \\
\intertext{and then, from eq.\ \eqref{eq:omega} and \eqref{eq:a'_t},}
    \sum_{\nu,J} \partial_i f^k_{\nu,J}(\red{\X_1},\ldots,\red{\X_p}) \nil{\X}^\nu \T^J
    &= \sum_{\nu,J} (\nu_i + 1) f^k_{\nu + \delta_i,J}(\red{\X_1},\ldots,\red{\X_p}) \nil{\X}^\nu \T^J
    \text{.}
\end{align*}

Let us fix $\nu \in \N^p$ and $J \subseteq \set{1,\ldots,q}$. If $A
= \rspoly{p}{q}/\maxid^s$ with $s > \max(\abs{\nu}+1,q)$ ($\maxid$
is as usual the maximal ideal of polynomials without constant term),
we note that necessarily, due to the arbitrariness of
$(\X_1,\ldots,\T_q)$,
\[
    \partial_i f^k_{\nu,J} = (\nu_i + 1) f^k_{\nu + \delta_i,J}
\]
and, by recursion, ${(\alpha_A)}_k$ is of the form of
\eqref{eq:nat_tr_from_morph} with $s_{k,J} = f^k_{0,J}$.

Conversely, let ${(\alpha_A)}_k$ be of the form of eq.\
\eqref{eq:nat_tr_from_morph}. By linearity, it is $A_0$\nbd linear
if and only if it is $A_0$\nbd linear in each variable. It is
$A_0$\nbd linear in the even variables for what has been said above
and in the odd variables since it is polynomial in them.
\end{proof}

In particular the above discussion shows also that any
superdiffeomorphism $U \to U$ gives rise, for each $A$, to an
$A_0$\nbd smooth diffeomorphism $U_A \to U_A$ and then each $U_A$
admits a canonical structure of $A_0$\nbd manifold.

We now use the results obtained for superdomains in order to prove
theorems \ref{theor:azerolinear} and \ref{theor:full_and_faithful}
in the general supermanifold case.

%We need to recall the following
%elementary result from ordinary differential geometry.

%\begin{lemma}
%Let $X$ be a set. Suppose a countable covering $\set{U_i}$ and a
%collection of injective maps $h_i \colon U_i \to \R^n$ are given,
%satisfying the following conditions:
%\begin{enumerate}
%    \item for each $i$ and $j$, $h_i(U_i \cap U_j)$ is open in $\R^n$;
%    \item $h_j \circ h_i^{-1} \colon h_i(U_i \cap U_j) \to h_j(U_i \cap U_j)$ is a diffeomorphism;
%    \item for each $x$ and $y$ in $X$, $x \neq y$, there exists $V_x \subseteq U_i$
%    and $V_y \subseteq U_j$ such that $x \in V_x$, $y \in V_y$, $V_x \cap V_y = \emptyset$, $h_i(V_x)$ and $h_j(V_y)$ both open.
%\end{enumerate}
%Then $X$ admits a unique smooth manifold structure such that
%$\set{(U_i,h_i)}$ defines an atlas over it.
%\end{lemma}

%We leave the proof of this lemma to the reader and we return to
%the proofs of theorems \ref{theor:azerolinear} and
%\ref{theor:full_and_faithful}.

\begin{proof}[Proof of theorem \ref{theor:azerolinear}]
Let $\set{(U_i, h_i)}$ be an atlas over $M$ and $p|q$ the dimension
of $M$. Each chart $(U_i,h_i)$ of such an atlas induces a chart
$\big((U_i)_A, (h_i)_A\big)$,
%$(U_i)_A = \bigsqcup_{x \in U_i} M_{A,x}$,
over $M_A$ given by
%\[
%    \begin{aligned}
$        (h_i)_A \colon (U_i)_A \to \R^{p|q}_A$, $        x_A
\mapsto x_A \circ h_i^*$.
%    \end{aligned}
%\]
The coordinate changes are easily checked to be given, with some
abuse of notation, by $(h_{i} \circ h_{j}^{-1})_A$, which are
$A_0$\nbd smooth due to lemma \ref{lemma:A0_smooth_nat_tr}. The
uniqueness of the $A_0$\nbd manifold structure is clear. This proves
the first point. The other two points concern only the local
behavior of the considered maps and are clear in view of lemma
\ref{lemma:A0_smooth_nat_tr} and obs. \ref{obs:coordinates}.
\end{proof}

\begin{proof}[Proof of theorem \ref{theor:full_and_faithful}]
Lemma \ref{lemma:A0_smooth_nat_tr} accounts for the case in which
$M$ and $N$ are superdomains. For the general case, let us suppose
we have
%\[
$    \alpha \in \Hom_{\dfunct{\SWA}{\cAoMan}} \big( M_{(\blank)} ,
N_{(\blank)} \big)$.
%\]
Fixing a suitable atlas of both supermanifolds, we obtain, in view
of lemma \ref{lemma:A0_smooth_nat_tr},  a family of local morphisms.
Such a family will give a morphism $M \to N$ if and only if they do
not depend on the choice of the coordinates. Let us suppose that $U$
and $V$ are open subsupermanifolds of $M$ and $N$ respectively, $U
\isom \R^{p|q}$, $V \isom \R^{m|n}$, such that $\alpha_\R(\topo{U})
\subseteq \topo{V}$, and
%\begin{align*}
$    h_i \colon U \to \R^{p|q}$, $    k_i \colon V \to \R^{m|n}$, $
i = 1,2$
%\end{align*}
are two different choices of coordinates on $U$ and $V$
respectively. The natural transformations
\[
    (\hat{\phi}_i)_{(\blank)} \coloneqq \left( k_i \right)_{(\blank)}
        \circ \restr{\left( \alpha_{(\blank)} \right)}{U_{(\blank)}}
        \circ \left( h_i^{-1} \right)_{(\blank)}
        \colon \R^{p|q}_{(\blank)} \to \R^{m|n}_{(\blank)}
\]
give rise to two morphisms $\hat{\phi}_i \colon \R^{p|q} \to
\R^{m|n}$. If
%\[
 $  \phi_i \coloneqq k_i^{-1} \circ \hat{\phi}_i \circ h_i \colon U \to V$,
%\]
we have $\phi_1 = \phi_2$ since $(\phi_i)_{(\blank)} = \restr{\left(
\alpha_{(\blank)} \right)}{U_{(\blank)}}$ and two morphisms that
give rise to the same natural transformation on a superdomain are
clearly equal.
\end{proof}

Next proposition states that the Schwarz embedding
 that the Schwarz
embedding preserves products and, in consequence, group objects.

\begin{proposition} \label{prop:s-emb}
For all supermanifolds $M$ and $N$,

\medskip

%\[
\centerline{$    \sch(M \times N) \isom \sch(M) \times \sch(N) \text{.}$}
%\]

\medskip

Moreover $\sch(\R^{0|0})$ is a terminal object in the category
$\dfunct{\SWA}{\cAoMan}$.
\end{proposition}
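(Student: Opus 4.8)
The statement has two parts: preservation of products, and the terminal-object claim. The plan is to reduce everything to the level of sets (since the $A_0$-manifold structure and the $A_0$-smoothness of transition maps are local and built coordinatewise), and there to use that $\sheaf(M\times N)$ is, in a suitable sense, a completed tensor product of $\sheaf(M)$ and $\sheaf(N)$, so that $\Hom_{\SAlg}(\sheaf(M\times N),A)$ splits as a product.

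First I would set up the candidate isomorphism. The two projections $\pr_M\colon M\times N\to M$ and $\pr_N\colon M\times N\to N$ are supermanifold morphisms, so by Theorem~\ref{theor:azerolinear} they induce, for every $A\in\SWA$, $A_0$-smooth maps $(\pr_M)_A\colon (M\times N)_A\to M_A$ and $(\pr_N)_A\colon (M\times N)_A\to N_A$, and these assemble into morphisms in $\dfunct{\SWA}{\cAoMan}$. Pairing them gives a natural transformation
\[
    \Phi_A\colon (M\times N)_A \to M_A\times N_A,\qquad x_A\mapsto \big((\pr_M)_A(x_A),(\pr_N)_A(x_A)\big),
\]
i.e. $x_A\mapsto (x_A\circ\pr_M^*,\,x_A\circ\pr_N^*)$. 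I claim $\Phi_A$ is a bijection for every $A$. Injectivity and surjectivity both follow from the local description: working in product charts $U\times V$ with $U\isom\R^{p|q}$, $V\isom\R^{m|n}$, one has $(U\times V)\isom\R^{p+m|q+n}$, and under the coordinate identification of Observation~\ref{obs:coordinates}, $\Phi_A$ becomes the obvious identification
\[
    \topo{U\times V}\times\nil{A_0}^{p+m}\times A_1^{q+n}\;\isom\;\big(\topo{U}\times\nil{A_0}^{p}\times A_1^{q}\big)\times\big(\topo{V}\times\nil{A_0}^{m}\times A_1^{n}\big),
\]
which is visibly a bijection (indeed a diffeomorphism of $A_0$-manifolds). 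By Proposition~\ref{prop:valori_assegnati} an $A$-point is determined by and freely chooses the images of a coordinate system, so this local computation is all that is needed; gluing over a product atlas of $M\times N$ gives that $\Phi_A$ is a global bijection and, chart-by-chart, an $A_0$-diffeomorphism. Naturality in $A$ is immediate since all the maps in sight are given by pre-/post-composition with fixed algebra maps. Hence $\Phi_{(\blank)}$ is an isomorphism in $\dfunct{\SWA}{\cAoMan}$, which is exactly $\sch(M\times N)\isom\sch(M)\times\sch(N)$ — here one should note that the product in $\dfunct{\SWA}{\cAoMan}$ is computed objectwise as the product in $\cAoMan$, which in turn is the Cartesian product of the underlying manifolds with the product $A_0$-module structure on charts, so $\big(\sch(M)\times\sch(N)\big)(A)=M_A\times N_A$ as $A_0$-manifolds.

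For the terminal-object claim: $\sch(\R^{0|0})$ sends $A\mapsto (\R^{0|0})_A=\Hom_{\SAlg}(\R,A)$, which is a single point (the unique unital algebra map), viewed as the zero-dimensional $A_0$-manifold. Given any $\cF\in\dfunct{\SWA}{\cAoMan}$, there is exactly one family of maps $\cF(A)\to\{*\}$, it is automatically $A_0$-smooth (any map to a point is), and automatically natural; so $\sch(\R^{0|0})$ receives a unique morphism from every object. Preservation of group objects is then a formal consequence: any product-preserving functor between categories with finite products carries the diagrams defining a group object (multiplication, unit, inverse, associativity, etc.) to the corresponding diagrams, using $\sch(M\times M)\isom\sch(M)\times\sch(M)$ and $\sch(\R^{0|0})$ terminal to interpret the unit.

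**Main obstacle.** The only real content is checking that $\sheaf(M\times N)$ behaves like a tensor product as far as $A$-points (with $A$ finite-dimensional) are concerned — equivalently, the local claim that $\Phi_A$ is bijective in product coordinates. This is where one must invoke Proposition~\ref{prop:valori_assegnati} (an $A$-point is exactly a compatible choice of images of coordinates) rather than trying to manipulate the infinite-dimensional algebras $\sheaf(M)$, $\sheaf(N)$ directly; the finiteness of $A$ (a power of its maximal ideal vanishes) is what makes the formal Taylor expansions finite and the whole argument go through. Everything else — naturality, the $A_0$-smoothness of $\Phi_A$ and its inverse, the gluing over an atlas, and the terminal-object statement — is routine bookkeeping given Theorems~\ref{theor:azerolinear} and~\ref{theor:full_and_faithful}.
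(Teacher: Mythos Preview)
Your proposal is correct and follows essentially the same approach as the paper: both define the forward map via the projections (equivalently, restriction along the injections $\sheaf(M),\sheaf(N)\hookrightarrow\sheaf(M\times N)$), and both invoke Proposition~\ref{prop:valori_assegnati} in product (``rectangular'') coordinates to verify bijectivity and compatibility with the $A_0$-manifold structure; the terminal-object argument is identical. The only cosmetic difference is that the paper writes the inverse explicitly on elementary tensors, $z_A(s_1\otimes s_2)=x_A(s_1)\cdot y_A(s_2)$, whereas you read the inverse off directly from the coordinate identification of Observation~\ref{obs:coordinates}---these amount to the same thing.
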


\begin{proof}
The fact that $(M \times N)_A \isom M_A \times N_A$ for all $A$ can
be checked easily. Indeed, let $z_A\in (M\times N)_A$ with
$\red{z_A}=(x,y)$, we have that $\sheaf(M)$ and
$\sheaf(N)$ naturally inject in $\sheaf(M\times N)$.
 Hence $z_A$ defines,
by restriction, two $A_0$\nbd points $x_A\in M_A$ and $y_A\in N_A$.
Using prop.\ \ref{prop:valori_assegnati} and rectangular coordinates
over $M\times N$ it is easy to check that such a correspondence is
injective, and is also a natural transformation. Conversely, if
$x_A\in M_{A}$ is near $x$ and $y_A\in N_A$
is near $y$ (see obs. \ref{obs:stalk}), they define a map $z_A \colon
\sheaf(M\times N) \to A$ through $z_A(s_1 \otimes
s_2)=x_A(s_1)\cdot y_A(s_2)$. Using again prop.\
\ref{prop:valori_assegnati}, it is not difficult to check that this
requirement uniquely determines a superalgebra morphism $\sheaf{(M
\times N)} \to A$ and
that this correspondence defines an inverse for the morphism
$(M\times N)_{(\blank)}\to M_{(\blank)}\times N_{(\blank)}$ defined
above.
Along the same lines we see that a  similar condition for
the morphisms holds. Finally $\sch(\R^{0|0})$ is a terminal object,
since $\R^{0|0}_A = \R^0$ for all $A$.
\end{proof}

%It is easy to check that the stated result is equivalent to the fact
%that $\sch$ preserves finite products for arbitrary many objects.
%\medskip

%First we recall
%briefly the notion of group object.

%\begin{definition}
%A \emph{group object} in some category with finite products and terminal
%object $\mathfrak{T}$, is an object $G$ with three arrows
%\begin{align*}
%    \mu_G \colon G \times G &\to G &
%    i_G \colon G &\to G &
%    e_G \colon \mathfrak{T} &\to G
%\end{align*}
%satisfying the usual commutative diagrams for multiplication,
%inverse and unit respectively.
%\end{definition}

%\begin{observation}
%In a locally small category $\cat{C}$ we have that,
%equivalently, a group  is an object
%$G$ whose functor of points $\FOP{G}$ takes value in the category of
%groups $\Grp$, i.~e.\ $G$ is a group object if there exists a
%functor $\cat{C}\op \to \Grp$  that, composed with the forgetful
%functor $\Grp \to \Sets$, equals $\FOP{G}$.
%\end{observation}

\begin{corollary}
%If $G$ is a super Lie group, $\sch(G)$ with the arrows
%$\sch(\mu_G)$, $\sch(i_G)$ and $\sch(e_G)$ is a group object in
%$\dfunct{\SWA}{\cAoMan}$. This means that
The Weil--Berezin functor of a super Lie group (i.~e.\ a
group object in the category of supermanifolds) takes values in
the category of $A_0$\nbd smooth Lie groups.
\end{corollary}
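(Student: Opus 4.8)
The plan is to deduce the corollary directly from Proposition~\ref{prop:s-emb} by a general nonsense argument about product-preserving functors. A super Lie group $G$ is, by definition, a group object in $\SMan$: it comes equipped with multiplication $\mu \colon G \times G \to G$, unit $e \colon \R^{0|0} \to G$, and inverse $\iota \colon G \to G$ satisfying the usual associativity, unit, and inverse diagrams. Since $\sch$ preserves products and carries $\R^{0|0}$ to the terminal object of $\dfunct{\SWA}{\cAoMan}$, applying $\sch$ to these structure morphisms and to the commuting diagrams that express the group axioms turns $G_{(\blank)}$ into a group object in $\dfunct{\SWA}{\cAoMan}$. Concretely, $\mu_{(\blank)} \colon G_{(\blank)} \times G_{(\blank)} \isom (G\times G)_{(\blank)} \to G_{(\blank)}$, $e_{(\blank)}$, and $\iota_{(\blank)}$ are the required structure natural transformations, and the axioms hold because $\sch$ is a functor preserving the relevant finite limits.

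First I would spell out what ``group object in $\dfunct{\SWA}{\cAoMan}$'' means: it is a functor $\cF \colon \SWA \to \cAoMan$ together with natural transformations realizing the group operations, all of whose components $\cF(A)$ are thereby groups in $\cAoMan$ and whose transition maps $\funcpt{\rho}$ are group homomorphisms. Then for a fixed super Weil algebra $A$, evaluating the structure transformations at $A$ endows $G_A$ with multiplication $\mu_A \colon G_A \times G_A \to G_A$ (using $(G\times G)_A \isom G_A \times G_A$ from Proposition~\ref{prop:s-emb}), unit $\R^0 = (\R^{0|0})_A \to G_A$, and inverse $\iota_A \colon G_A \to G_A$; these satisfy the group axioms because the corresponding diagrams in $\SMan$ commute and the Weil--Berezin functor is functorial. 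By Theorem~\ref{theor:azerolinear}, each of $\mu_A$, $e_A$, $\iota_A$ is $A_0$\nbd smooth (being of the form $\phi_A$ for a supermanifold morphism $\phi$), so $G_A$ is a Lie group in the category $\AoMan$, i.e.\ an $A_0$\nbd smooth Lie group: a group that is simultaneously an $A_0$\nbd manifold with $A_0$\nbd smooth multiplication and inversion.

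The argument is essentially formal, so I do not expect a genuine obstacle; the one point deserving care is the compatibility of the product isomorphism $(G\times G)_A \isom G_A \times G_A$ with the $A_0$\nbd manifold structures, i.e.\ that this isomorphism is not merely a bijection of sets but an isomorphism in $\cAoMan$. This is already implicit in Proposition~\ref{prop:s-emb}, whose proof checks that the correspondence is a natural transformation built from supermanifold morphisms, hence $A_0$\nbd smooth by Theorem~\ref{theor:azerolinear}; one just needs to observe that it therefore respects the smooth structures needed to transport the group axioms. A second, very minor check is that the functor $A \mapsto G_A$ sends a super Weil algebra morphism $\rho \colon A \to B$ to a group homomorphism $\funcpt{\rho} \colon G_A \to G_B$ (which is $\restr{\rho}{A_0}$\nbd smooth by Theorem~\ref{theor:azerolinear}), so that the whole package really lives in $\dfunct{\SWA}{\cAoMan}$; this again follows from the naturality of the structure transformations obtained by applying $\sch$. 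Thus the corollary follows once these two routine compatibilities are noted, and I would present the proof in roughly this order: recall the definition of group object, apply $\sch$ and Proposition~\ref{prop:s-emb} to obtain the structure transformations, invoke Theorem~\ref{theor:azerolinear} for the $A_0$\nbd smoothness of each component, and conclude that every $G_A$ is an $A_0$\nbd smooth Lie group.
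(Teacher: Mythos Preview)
Your argument is correct and is exactly the intended one: the paper states the corollary without proof immediately after Proposition~\ref{prop:s-emb}, treating it as the standard ``product-preserving functors send group objects to group objects'' consequence you have spelled out. The compatibility checks you flag (that the product isomorphism is in $\cAoMan$ and that $\funcpt{\rho}$ is a group homomorphism) are indeed the only points requiring care, and both follow from Theorem~\ref{theor:azerolinear} and the naturality established in Proposition~\ref{prop:s-emb}.
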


%\begin{proof}
%This is an immediate consequence of prop.\ \ref{prop:s-emb}.
%\end{proof}

%For more information on group objects and
%product preserving functors, see \cite{Vistoli}.

%\subsection{Representability of the Weil--Berezin functor}
%\label{subsec:representability}

%Next definition is the natural generalization of the classical one
%to the Weil--Berezin functor setting.

We now turn to representability questions.

\begin{definition}
We say that a functor
%\[
 $   \cF \colon \SWA \to \cAoMan$
%\]
is \emph{representable} if there exists a supermanifold $M_\cF$ such
that $\cF \isom (M_\cF)_{(\blank)}$ in $\dfunct{\SWA}{\cAoMan}$.
\end{definition}

Notice that we are abusing the category terminology, that considers
a functor $\cF$ to be representable if and only if $\cF$ is
isomorphic to the $\Hom$ functor.

\medskip

Due to theorem \ref{theor:full_and_faithful}, if a functor $\cF$ is
representable, then the supermanifold $M_\cF$ is unique up to
isomorphism.

%Next example shows that there exists non representable
%functors.

%\begin{example}
%Consider the constant functor $\SWA \to \cAoMan$ defined as
%$A \mapsto \R$ on the objects ($\R \isom A/\nil{A}$ is an $A$\nbd
%module) and $\rho \mapsto \id_\R$ on the morphisms. This functor is
%not representable in the sense explained above.
%\end{example}

%In this subsection we look for conditions ensuring the
%representability for a functor
%$\cF\colon \SWA \to \cAoMan$.

\medskip

Since $\cF(\R)$ is a manifold, we can consider an open set $U
\subseteq \cF(\R)$. If $A$ is a super Weil algebra and
$\funcpt{\pr_A} \coloneqq \cF(\pr_A)$, where $\pr_A$ is the
projection $A \to \R$, $\funcpt{\pr_A}^{-1}(U)$ is an open $A_0$\nbd
submanifold of $\cF(A)$. Moreover, if $\rho \colon A \to B$ is a
superalgebra map, since $\pr_B \circ \rho = \pr_A$, $\funcpt{\rho}
\coloneqq \cF(\rho)$ can be restricted to
%\[
$    \subfunc{\funcpt{\rho}}{\funcpt{\pr_A}^{-1}(U)} \colon
\funcpt{\pr_A}^{-1}(U) \to \funcpt{\pr_B}^{-1}(U)$.
%\]
We can hence define the functor
%\[
%    \begin{aligned}
$        \subfunc{\cF}{U} \colon \SWA \to \cAoMan$, $        A
\mapsto \funcpt{\pr_A}^{-1}(U)$, $ \rho
\mapsto \subfunc{\funcpt{\rho}}{\funcpt{\pr_A}^{-1}(U)}$.
%    \end{aligned}
%\]

\begin{proposition}[Representability]
A functor
%\[
$    \cF \colon \SWA \to \cAoMan$
%\]
is representable if and only if there exists an open cover
$\set{U_i}$ of $\cF(\R)$ such that $\subfunc{\cF}{U_i} \isom
(\bV_i)_{(\blank)}$ with $\bV_i$ superdomains in a fixed $\R^{p|q}$.
\end{proposition}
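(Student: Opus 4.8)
The plan is to prove both implications by combining the local-to-global machinery already developed (Theorem~\ref{theor:full_and_faithful} and Lemma~\ref{lemma:A0_smooth_nat_tr}) with the classical representability criterion (Theorem~\ref{theor:representability}). First I would dispose of the easy direction: if $\cF \isom M_{(\blank)}$ for a supermanifold $M$, then $\cF(\R) \isom M_\R \isom \topo{M}$ by ``super'' Milnor's exercise, and any atlas $\set{(V_i,h_i)}$ of $M$ with $h_i \colon V_i \to \bV_i \subseteq \R^{p|q}$ yields an open cover of $\topo{M}$ by the $\topo{V_i}$; setting $U_i \coloneqq \topo{V_i}$, the description of $\subfunc{\cF}{U_i}$ via $\funcpt{\pr_A}^{-1}(U_i)$ matches exactly $(V_i)_A$ (since the base point of an $A$-point lies over $U_i$ iff the $A$-point restricts to one of $V_i$), and $h_i$ induces the isomorphism $\subfunc{\cF}{U_i} \isom (\bV_i)_{(\blank)}$ in $\dfunct{\SWA}{\cAoMan}$.

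For the converse, suppose the open cover $\set{U_i}$ of $\cF(\R)$ with $\subfunc{\cF}{U_i} \isom (\bV_i)_{(\blank)}$ is given. The strategy is to build a supermanifold $M$ by gluing the superdomains $\bV_i$ along transition data extracted from $\cF$. The key observation is that the overlaps $\subfunc{\cF}{U_i \cap U_j}$ can be described in two ways: as an open subfunctor of $\subfunc{\cF}{U_i} \isom (\bV_i)_{(\blank)}$, and as an open subfunctor of $\subfunc{\cF}{U_j} \isom (\bV_j)_{(\blank)}$; call the corresponding open subsupermanifolds $W_{ij} \subseteq \bV_i$ and $W_{ji} \subseteq \bV_j$. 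The composite isomorphism $(W_{ij})_{(\blank)} \isom \subfunc{\cF}{U_i \cap U_j} \isom (W_{ji})_{(\blank)}$ is a natural transformation of Weil--Berezin functors between superdomains which is $A_0$-smooth for each $A$ (being a morphism in $\dfunct{\SWA}{\cAoMan}$), so by Lemma~\ref{lemma:A0_smooth_nat_tr} it comes from a unique superdiffeomorphism $\phi_{ij} \colon W_{ij} \to W_{ji}$. Functoriality of the isomorphisms forces the cocycle condition $\phi_{jk} \circ \phi_{ij} = \phi_{ik}$ on triple overlaps — again using faithfulness of $\sch$ on superdomains — so the $\bV_i$ glue to a supermanifold $M$.

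It then remains to check that $M_{(\blank)} \isom \cF$ in $\dfunct{\SWA}{\cAoMan}$. One assembles the local isomorphisms $\subfunc{(M_{(\blank)})}{U_i} = (V_i)_{(\blank)} \isom (\bV_i)_{(\blank)} \isom \subfunc{\cF}{U_i}$, where $V_i \subseteq M$ is the image of $\bV_i$ under the gluing; these agree on overlaps by construction of the $\phi_{ij}$, hence patch to a natural transformation $M_{(\blank)} \to \cF$. To see it is an isomorphism, one can invoke that $\cF$ is built so as to be a sheaf covered by the open subfunctors $\subfunc{\cF}{U_i}$ — the hypothesis that each $\subfunc{\cF}{U_i}$ is representable plus the explicit cover gives the sheaf property fiberwise on each $\cF(A)$ — and then the two functors have isomorphic restrictions to a common cover, so they are isomorphic. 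I expect the main obstacle to be the bookkeeping in verifying that the gluing is well posed: one must confirm that $W_{ij}$ is genuinely an \emph{open subsupermanifold} of $\bV_i$ (i.e.\ that open subfunctors of a representable $\cF$ correspond to open subsupermanifolds, which uses that $\funcpt{\pr_A}$ is continuous and the $A_0$-manifold structure is the canonical one from Theorem~\ref{theor:azerolinear}), and that the resulting topological space $\topo{M}$ is Hausdorff — this last point is where one genuinely needs that the $U_i$ cover a manifold $\cF(\R)$ and that the overlap maps are compatible with the identifications on $\cF(\R) = M_\R$.
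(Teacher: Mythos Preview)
Your proof is correct and follows essentially the same route as the paper: extract transition data on overlaps, invoke Lemma~\ref{lemma:A0_smooth_nat_tr} to turn $A_0$\nbd smooth natural isomorphisms into superdiffeomorphisms, check the cocycle condition, and glue. The reference to Theorem~\ref{theor:representability} in your opening plan is a red herring---you never actually use it, and neither does the paper.

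The one presentational difference is where the gluing takes place. You glue the abstract superdomains $\bV_i$ along $W_{ij} \subseteq \bV_i$, and then must verify a posteriori that the resulting topological space is $\topo{\cF(\R)}$ and is Hausdorff---exactly the bookkeeping you flag as a concern. The paper instead pushes the sheaf of each $\bV_i$ forward along $(h_i^{-1})_\R$ to get a supermanifold structure $\bU_i$ directly on the open set $U_i \subseteq \topo{\cF(\R)}$, and glues these sheaves over the \emph{fixed} topological space $\topo{\cF(\R)}$. This sidesteps your Hausdorffness worry entirely, since the underlying space is already the manifold $\cF(\R)$.
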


\begin{proof}
The necessity is clear due to the very definition of supermanifold.
Let us prove sufficiency. We have to build a supermanifold structure
on the topological space $\topo{\cF(\R)}$. Let us denote by
$(h_i)_{(\blank)} \colon \cF_{U_i} \to (\bV_i)_{(\blank)}$ the
natural isomorphisms in the hypothesis. On each $U_i$, we can put a
supermanifold structure $\bU_i$, defining the sheaf $\sheaf_{\bU_i}
\coloneqq [(h_i^{-1})_\R]_* \sheaf_{\bV_i}$. Let $k_i$ be the
isomorphism $\bU_i \to \bV_i$ and $(k_i)_{(\blank)}$ the
corresponding natural transformation. If $U_{i,j} \coloneqq U_i \cap
U_j$, consider the natural transformation $(h_{i,j})_{(\blank)}$
defined by the composition
\[
    (k_i^{-1})_{(\blank)} \circ (h_i)_{(\blank)} \circ (h_j^{-1})_{(\blank)} \circ (k_j)_{(\blank)}
    \colon (U_{i,j},\restr{\sheaf_{\smash{\bU_j}}}{U_{i,j}})_{(\blank)}
    \to (U_{i,j},\restr{\sheaf_{\smash{\bU_i}}}{U_{i,j}})_{(\blank)}
\]
where in order to avoid heavy notations we didn't explicitly
indicate the appropriate restrictions. Each $(h_{i,j})_{(\blank)}$
is a natural isomorphism in $\dfunct{\SWA}{\cAoMan}$ and, due to
lemma \ref{lemma:A0_smooth_nat_tr}, it gives rise to a supermanifold
isomorphism
%\[
$    h_{i,j} \colon$
$(U_{i,j},\restr{\sheaf_{\smash{\bU_j}}}{U_{i,j}}) \to
(U_{i,j},\restr{\sheaf_{\smash{\bU_i}}}{U_{i,j}})$.
%\]
The $h_{i,j}$ satisfy the cocycle conditions $h_{i,i} = \id$ and
$h_{i,j} \circ h_{j,k} = h_{i,k}$ (restricted to $U_i \cap U_j \cap
U_k$). This follows from the analogous conditions satisfied by
$(h_{i,j})_A$ for each $A\in\SWA$. The supermanifolds $\bU_i$ can
hence be glued (for more information about the construction of a
supermanifold by gluing see for example \cite[ch.~2]{DM} or
\cite[\S~4.2]{Varadarajan}). Denote by $M_\cF$ the manifold thus
obtained.
 Moreover it is clear that $\cF$
is represented by the supermanifold $M_\cF$. Indeed, one can check
that the various $(h_i)_{(\blank)}$ glue together and give a natural
isomorphism $h_{(\blank)} \colon \cF \to (M_\cF)_{(\blank)}$.
\end{proof}

\begin{remark}
The supermanifold $M_\cF$ admits a more synthetic characterization.
In fact it is easily seen that $\topo{M_\cF} \coloneqq
\topo{\cF(\R)}$ and
%\[
$    \sheaf_{M_\cF}(U) \coloneqq$ $ \Hom_{\dfunct{\SWA}{\cAoMan}}
\big( \subfunc{\cF}{U}, \R^{1|1}_{(\blank)} \big)$.
%\]
\end{remark}

%\subsection{The functors of $\Lambda$\nbd points}

%In this subsection
We end this section giving a
brief exposition of
the original approach of A. S. Schwarz and A. A. Voronov (see
\cite{Schwarz,Voronov}). In their work they considered only
Grassmann algebras instead of all super Weil algebras. There are
some advantages in doing so: Grassmann algebras are many fewer,
moreover, as we noticed in remark \ref{remark:localalg}, they are
the sheaf of the super domains $\R^{0|q}$ and so the restriction to
Grassmann algebras of the local functors of points can be considered
as a true restriction of the functor of points. Finally the use of
Grassmann algebras is also used by A. S. Schwarz to formalize the
language commonly used in physics.

On the other hand the use of super Weil algebras has the advantage
that we can perform differential calculus on the Weil--Berezin
functor as we shall see in section \ref{sec:diff_calc}. Indeed
prop.\ \ref{prop:distributions} is valid only for the Weil--Berezin
functor approach, since not every point supported distribution can
be obtained using only Grassmann algebras. Also theorem
\ref{theor:transitivity} and its consequences are valid only in this
approach, since purely even Weil algebras are considered.

\medskip

If $M$ is a supermanifold and $\Gras$ denotes the category of finite
dimensional Grassmann algebras, we can consider the two
functors
\begin{align*}
    &\Gras \to \Sets, \; \Lambda \mapsto M_\Lambda &
    &\text{and} &
    &\Gras \to \cAoMan, \; \Lambda \mapsto M_\Lambda
\end{align*}
%\begin{align*}
%    &\begin{aligned}
%        \Gras &\to \Sets \qquad
%        \Lambda &\mapsto M_\Lambda; \qquad
%    \end{aligned} &
%    \begin{aligned}
%        \Gras &\to \cAoMan \qquad
%        \Lambda &\mapsto M_\Lambda
%    \end{aligned}
%\end{align*}
in place of those already introduced in the context of $A$-points.
%introduced by eq.\ \eqref{eq:A-point_functor} and eq.\
%\eqref{eq:WB_functor} respectively.
As in the case of $A$\nbd points, with a slight abuse of notation we
denote by $M_\Lambda$ the $\Lambda$\nbd points for each of the two
different functors. What we have seen previously still remains valid in this setting, provided we substitute
systematically $\SWA$ with $\Gras$; in particular theorems
\ref{theor:azerolinear} and \ref{theor:full_and_faithful} still hold
true. They are based on prop.\ \ref{prop:formal_series} and lemma
\ref{lemma:A0_smooth_nat_tr} that we state here in their original
formulation as it is contained in \cite{Voronov}.

\begin{proposition}
The set of natural transformations between $\Lambda \mapsto
\R^{p|q}_{\Lambda}$ and $\Lambda \mapsto \R^{m|n}_{\Lambda}$ is in
bijective correspondence with
%\[
$    \big( \fseries{p}{q}(\R^p) \big)_0^m \times \big(
\fseries{p}{q}(\R^p) \big)_1^n$.
%\]
A natural transformation comes from a supermanifold morphism
$\R^{p|q} \to \R^{m|n}$ if and only if it is $\Lambda_0$\nbd smooth
for each Grassmann algebra $\Lambda$.
\end{proposition}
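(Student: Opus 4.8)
The plan is to reduce the two statements to their super Weil algebra counterparts, which are already available in the excerpt. The key observation is that finite dimensional Grassmann algebras form a full subcategory of $\SWA$ (and in fact purely odd super Weil algebras), so a Grassmann-restricted natural transformation is \emph{a priori} weaker data than a $\SWA$-natural transformation; the work is to show nothing is lost. First I would treat the bijection with formal series. The ``only if'' direction is immediate: restricting $\alpha_{(\blank)}$ from $\SWA$ to $\Gras$ and invoking prop.\ \ref{prop:formal_series} with $\topo{U} = \R^p$ produces the list $\F = (F_1,\ldots,F_{m+n})$, with the stated parity and base-point conditions. For the ``if'' direction I would mimic the proof of prop.\ \ref{prop:formal_series}, but now the test algebra $\hat{A}$ used there, namely $\bigl(\R[z_1,\ldots,z_p]\otimes\ext{\zeta_1,\ldots,\zeta_q}\bigr)/\maxid^s$, is no longer a Grassmann algebra because of the even generators $z_i$. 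So the argument must be rerun using only Grassmann test algebras.

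The essential point — and the step I expect to be the main obstacle — is that a Grassmann algebra $\Lambda$ with enough odd generators still ``sees'' the even directions: if $\Lambda = \ext{\xi_1,\ldots,\xi_N}$ then $\Lambda_0$ contains nilpotents such as $\xi_1\xi_2$, and products of such elements realize arbitrary nilpotent behaviour in $\nil{A_0}$ up to any prescribed height, provided $N$ is large. Concretely, given a super Weil algebra $A$ of height $r$ with $A_i^p \times A_i^q$-coordinates, one chooses $N$ large and a surjection $\Lambda \to A$ (this is exactly lemma \ref{lemma:SWA}, item 3, together with the fact that any Weil algebra of height $r$ is a quotient of a free graded-commutative nilpotent-truncated algebra, and the latter is in turn a quotient of a Grassmann algebra once $N \geq$ something like $r\cdot p + q$), and then transports the formal-series evaluation along this surjection. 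The functoriality of $\alpha_{(\blank)}$ over $\Gras$ forces the coefficients $a^k_{\nu,J}$ to be consistent across all these Grassmann algebras, yielding the unique $\F$; distinctness of two candidate $\F$'s is detected already on a single suitably large Grassmann algebra, by the same nilpotency-order bookkeeping as before. Thus prop.\ \ref{prop:formal_series} holds verbatim with $\SWA$ replaced by $\Gras$.

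Granting the formal-series description, the second assertion follows by exactly the argument of lemma \ref{lemma:A0_smooth_nat_tr}. One writes each component ${(\alpha_\Lambda)}_k$ in the coordinates $\set{\X_i,\T_j}$, freezes all but one even variable to obtain the map $\omega\colon \Lambda_0 \to \Lambda_j$ of eq.\ \eqref{eq:omega}, and compares the two expansions eq.\ \eqref{eq:Alin_diff_1} and eq.\ \eqref{eq:Alin_diff_2} of $\omega(\X_i+\Y)-\omega(\X_i)$. Requiring $\Lambda_0$-linearity of the differential for \emph{all} Grassmann $\Lambda$ again forces the recursion $\partial_i f^k_{\nu,J} = (\nu_i+1) f^k_{\nu+\delta_i,J}$; here the only adjustment is that the test algebra $\rspoly{p}{q}/\maxid^s$ must be replaced by a Grassmann algebra large enough to separate the finitely many coefficients involved, which is possible by the discussion of the previous paragraph. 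Odd-variable linearity is automatic by polynomiality. Conversely, a morphism $\R^{p|q}\to\R^{m|n}$ gives components of the form eq.\ \eqref{eq:nat_tr_from_morph}, which are $\Lambda_0$-linear in the even variables and polynomial in the odd ones, hence $\Lambda_0$-smooth. This completes the proof; the remaining claims of the section (the analogues of theorems \ref{theor:azerolinear} and \ref{theor:full_and_faithful} for $\Gras$) then follow by the identical gluing arguments, as already noted in the text.
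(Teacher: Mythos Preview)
Your diagnosis of the obstacle is exactly right: the test algebra $\hat{A}$ of eq.~\eqref{eq:hatA} is not a Grassmann algebra, and this is the only place where the argument must change. But your proposed repair contains a genuine error. You assert that the free truncated super-polynomial algebra $\rspoly{p}{q}/\maxid^{s}$ is a quotient of a Grassmann algebra $\extn{N}$ for $N$ large, and more generally that any super Weil algebra is. This is false whenever $p\geq 1$. Any superalgebra morphism $\extn{N}\to A$ sends the odd generators into $A_1$, so its image is the subalgebra of $A$ generated by elements of $A_1$; in particular the even part of the image lies in $\R + A_1\cdot A_1 + A_1^4+\cdots$. For $A=\rspoly{p}{q}/\maxid^{s}$ every element of $A_1$ has $\theta$-degree at least~$1$, hence every element of $A_1\cdot A_1$ has $\theta$-degree at least~$2$, and the even generator $x_1$ (of $\theta$-degree~$0$) is never reached. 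The same reasoning shows that a purely even Weil algebra such as $\R[x]/\langle x^3\rangle$ admits no surjection from any $\extn{N}$. So the strategy of ``transport along a surjection $\Lambda\to A$'' cannot be carried out.

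The paper's argument is different and does not attempt any such surjection. For $A=\extn{n}$ it takes $\hat{A}=\extn{2p(n-1)+q}$ with odd generators $\eta_{i,a},\xi_{i,a},\zeta_j$ and sets the $i$-th even coordinate of the universal point $y$ to be $u_i+\sum_{a}\eta_{i,a}\xi_{i,a}$. The genuinely new step, which your sketch does not supply, is to show that $\alpha_{\hat{A}}(y)$ is a polynomial in these quadratic sums rather than an arbitrary element of $\hat{A}$. The paper does this by a symmetry argument: the ``scalar product'' $\sum_a\eta_{i,a}\xi_{i,a}$ is invariant under simultaneous formal orthogonal rotations of $(\eta_{i,a})_a$ and $(\xi_{i,a})_a$, these rotations are automorphisms of $\hat{A}$ fixing $y$, and naturality then forces $\alpha_{\hat{A}}(y)$ into the invariant subalgebra, which is generated by the scalar products and the $\zeta_j$. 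Once this is established, the map $\rho_{x_A}$ sending $\eta_{i,a}\mapsto\epsilon_a$, $\xi_{i,a}\mapsto\sum_{b>a}\epsilon_b k_{i,a,b}$, $\zeta_j\mapsto\kappa_j$ recovers the formal-series evaluation at $x_{\extn{n}}$, and the rest of the argument (including the proof of the $\Lambda_0$-smoothness criterion, where the same choice of $\nil{\X_i}=\sum_a\eta_{i,a}\xi_{i,a}$ suffices to separate the monomials $\nil{\X}^\nu\T^J$) proceeds as in lemma~\ref{lemma:A0_smooth_nat_tr}.
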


\begin{proof}
See proofs of prop.\ \ref{prop:formal_series} and lemma
\ref{lemma:A0_smooth_nat_tr}. The only difference is in the first
proof. Indeed the algebra \eqref{eq:hatA} is not a Grassmann
algebra. So, if $A = \extn{n} = \ext{\epsilon_1,\ldots,\epsilon_n}$,
we have to consider
%\[
$    \hat{A} \coloneqq \extn{2p(n-1)+q} =
\ext{\eta_{i,a},\xi_{i,a},\zeta_j}$
%\]
($1 \leq i \leq p$, $1 \leq j \leq q$, $1 \leq a \leq n-1$). A
$\extn{n}$\nbd point can be written as
\[
    x_{\extn{n}} = \left( u_1 + \sum_{a<b} \epsilon_a \epsilon_b k_{1,a,b},\ldots,u_p + \sum_{a<b} \epsilon_a \epsilon_b k_{p,a,b},
    \kappa_1,\ldots,\kappa_q \right)
\]
with $u_i \in \R$, $k_{i,a,b} \in (\extn{n})_0$ and $\kappa_j \in
(\extn{n})_1$. Its image under a natural transformation can be
obtained taking the image of the $\extn{2p(n-1)+q}$\nbd point
\[
    y_{\red{x_{\extn{n}}}} \coloneqq \left( u_1 + \sum_{a=1}^{n-1} \eta_{1,a}\xi_{1,a},\ldots,u_p + \sum_{a=1}^{n-1} \eta_{p,a}\xi_{p,a},\zeta_1,\ldots,\zeta_q \right)
\]
and applying the map $\extn{2p(n-1)+q} \to \extn{n}$, $\eta_{i,a}
\mapsto \epsilon_a$, $\xi_{i,a} \mapsto \textstyle\sum_{b>a}
\epsilon_b k_{i,a,b}$, $\zeta_j \mapsto
\kappa_j$
%\[
%    \left\{
%    \begin{aligned}
%    \eta_{i,a} &\mapsto \epsilon_a \\
%    \xi_{i,a} &\mapsto \textstyle\sum_{b>a} \epsilon_b k_{i,a,b} \\
%    \zeta_j &\mapsto \kappa_j
%    \end{aligned}
%    \right.
%\]
to each component. The nilpotent part of each even component of
$y_{\red{x_{\extn{n}}}}$ can be viewed as a formal scalar product
%between $(\eta_{i,1},\ldots,\eta_{i,n-1})$ and
%$(\xi_{i,1},\ldots,\xi_{i,n-1})$
%\[
$    (\eta_{i,1},\ldots,\eta_{i,n-1}) \cdot
(\xi_{i,1},\ldots,\xi_{i,n-1})$ $    = \sum_{a=1}^{n-1}
\eta_{i,a}\xi_{i,a}$.
%\]
%
This is stable under formal rotations and the same must be for its
image. So $\eta_{i,a}$ and $\xi_{i,a}$ can occur in the image only
as a polynomial in $\sum_a \eta_{i,a} \xi_{i,a}$. In other words the
image of $y_{\red{x_{\extn{n}}}}$ (and then of $x_{\extn{n}}$) is
polynomial in the nilpotent part of the coordinates.
\end{proof}

\section{Applications to differential calculus} \label{sec:diff_calc}

In this section we discuss some aspects of super differential
calculus on supermanifolds using the language of the Weil--Berezin
functor. In particular we establish a relation between the
$A$\nbd points of a supermanifold $M$ and the finite support
distributions over it, which
play a crucial role in Kostant's
seminal approach to supergeometry.
We also prove the super version of the Weil transitivity
theorem, which is a key tool for the study of the infinitesimal
aspects of supermanifolds.%, and we apply it in order to define
%the ``tangent functor'' of $A\mapsto M_A$.

%\subsection{Point supported distributions and $A$\nbd points}
%\label{subsec:distributions}

%In this subsection we want to introduce and discuss
%Kostant's approach (see \cite{Kostant}) using
%the Weil--Berezin functor formalism.

\medskip

Let $(\topo{M},\sheaf_M)$ be a supermanifold of dimension
$p|q$ and $x \in \topo{M}$. As in \cite[\S~2.11]{Kostant}, let us
consider the distributions with support at $x$.
In what follows we make a full use of obs. \ref{obs:stalk}
which allows us to view any $x_A \in M_A$ as a map
$x_A: \sheaf_{M, \red{x_A}} \longrightarrow A$.

\begin{definition}
Let $\sheaf(M)'$ be the algebraic dual of the superalgebra of global
sections of $M$. The
\emph{distributions with finite support } over $M$ are defined as:
\[
    \sheaf{(M)}^\ast \coloneqq \Set{v \in \sheaf({M})' |
v \big(J\big)=0, \, \mbox{ with } J \mbox{ ideal of finite codimension}  }
\]
We define \emph{the distribution of order $k$, with support at $x\in \red{M}$}
and the \emph{distributions with support at $x$} as follows:
%\[

\medskip

\centerline{$\distr[k]{M}{x}\coloneqq \Set{v\in \sheaf({M})' |
v \big( \maxid_{M,x}^k\big)=0}, \qquad
\distr{M}{x} \coloneqq \bigcup_{k = 0}^\infty \distr[k]{M}{x} \text{,}$
}

\medskip

%\]
\noindent where $\maxid_{M,x}$ denotes the maximal ideal of sections whose
evaluation at $x$ is zero.
Clearly $\distr[k]{M}{x}
\subseteq \distr[k+1]{M}{x}$.
%The
%\emph{distributions with support at $x$} are given by the union
%\[
%    \distr{M}{x} \coloneqq \bigcup_{k = 0}^\infty \distr[k]{M}{x} \text{.}
%\]
\end{definition}

\begin{observation}
 If
$x_1,\ldots,x_p,\theta_1,\ldots,\theta_q$ are coordinates in a
neighbourhood of $x$, a distribution of order $k$ is of the form
\[
    v = \sum_{\substack{\nu \in \N^p \\
    J \subseteq \set{1,\ldots,q} \\
    \abs{\nu} + \abs{J} \leq k}} a_{\nu,J} \;
    \ev_x \frac{\partial^{\abs{\nu}}}{\partial x^\nu}
    \frac{\partial^{\abs{J}}}{\partial \theta^J}
\]
with $a_{\nu,J} \in {\R}$. This is immediate since %we have the
%following isomorphisms:
%\[
$    \distr[k]{M}{x}
       \isom \sheaf[C]^{\infty,*}_{M,x} \otimes {\Lambda(\theta_1,\dots,\theta_q)}^*$
%\]
and $\sheaf[C]^{\infty,*}_{M,x}=\sum a_{\nu,J} \; \ev_x
\frac{\partial^{\abs{\nu}}}{\partial x^\nu}$ because of the
classical theory.

Moreover it is also possible to prove that for each element $v\in \sheaf(M)^\ast$ there exists a finite number of points $x_i$ in $\red{M}$ such that  $v=\sum_{i} v_{x_i}$ with $v_{x_i}$ denoting a nonzero distribution with support at $x_i$.
\end{observation}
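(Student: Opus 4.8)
The plan is to reduce the statement to classical commutative algebra. By definition $v \in \sheaf(M)^\ast$ annihilates some ideal $J \subseteq \sheaf(M)$ of finite codimension, so it factors through the finite\nbd dimensional, hence Artinian, algebra $B \coloneqq \sheaf(M)/J$.

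First I would pin down the support points. In characteristic zero, supercommutativity forces every odd section to square to zero and hence to be nilpotent; thus the nilradical $\sheaf[J]_M(\topo M)$ — which is contained in every $\maxid_{M,x}$, since its elements vanish after reduction — maps into the Jacobson radical of $B$, and $B$ in turn surjects onto the finite\nbd codimensional quotient $\Cinf(\red M)/J'$, where $J'$ is the image of $J$ in $\Cinf(\red M) \isom \sheaf(M)/\sheaf[J]_M(\topo M)$. By the classical theory of finite\nbd codimensional ideals of $\Cinf$ (the fact underlying the decomposition displayed above), $Z(J')$ is a finite set $\set{x_1,\ldots,x_n} \subseteq \topo M$, and the maximal ideals of $\Cinf(\red M)$ containing $J'$ are exactly the $\maxid_{\red M,x_i}$; pulling back through $\sheaf(M) \to \Cinf(\red M)$, the ideals $\maxid_{M,x_1},\ldots,\maxid_{M,x_n}$ are precisely the maximal ideals of $\sheaf(M)$ containing $J$. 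Hence the image of $I \coloneqq \bigcap_i \maxid_{M,x_i}$ is the Jacobson radical of the Artinian ring $B$, so it is nilpotent and $I^K \subseteq J$ for some $K \in \N$.

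Next I would separate the points. The $\maxid_{M,x_i}$ are pairwise comaximal — separate $x_i$ from $x_j$ by a function pulled back from $\topo M$ — hence so are their $K$\nbd th powers, which gives $\bigcap_i \maxid_{M,x_i}^K = I^K$ and, by the Chinese remainder theorem,
\[
    \sheaf(M)\big/\textstyle\bigcap_i \maxid_{M,x_i}^K \isom \prod_{i=1}^n \sheaf(M)/\maxid_{M,x_i}^K \text{.}
\]
Since $\bigcap_i \maxid_{M,x_i}^K = I^K \subseteq J \subseteq \ker v$, the functional $v$ descends to this product, and splitting it along the canonical idempotents $e_i$ of the product — concretely $v_i(s) \coloneqq v(\tilde e_i\, s)$ for any lift $\tilde e_i$ of $e_i$, which is well defined because $v$ kills $\bigcap_i \maxid_{M,x_i}^K$ and which annihilates $\maxid_{M,x_i}^K$ — yields $v = \sum_{i=1}^n v_i$ with $v_i \in \distr[K]{M}{x_i} \subseteq \distr{M}{x_i}$. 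Dropping the indices for which $v_i = 0$ gives the desired finite decomposition into nonzero point\nbd supported distributions.

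I expect the only genuinely non\nbd formal step to be the identification of the maximal ideals of $\sheaf(M)$ lying over $J$ with point evaluations — the ``super'' upgrade of the classical statement that a finite\nbd codimensional ideal of $\Cinf$ is supported on a finite set — which is handled as above by reducing modulo nilpotents and invoking the classical fact that the preceding observation already uses. Everything else is the standard structure theory of Artinian rings (nilpotency of the radical together with the Chinese remainder theorem) and should present no difficulty.
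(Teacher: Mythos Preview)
Your argument is correct. The paper does not actually prove the second assertion of the observation; it only says ``it is also possible to prove that\ldots'' and leaves it at that, so you are supplying a proof where the paper gives none. The route you take --- pass to the Artinian quotient $B = \sheaf(M)/J$, identify its maximal ideals by reducing modulo nilpotents to $\Cinf(\red M)$ and invoking the classical Milnor exercise, then split via the Chinese remainder theorem --- is the natural one and is essentially how this is handled in \cite{Kostant}.

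One small quibble: your parenthetical ``the fact underlying the decomposition displayed above'' points to the wrong thing. The displayed isomorphism $\distr[k]{M}{x} \isom \sheaf[C]^{\infty,*}_{M,x} \otimes \Lambda(\theta_1,\dots,\theta_q)^*$ concerns the structure of distributions at a \emph{single} point; what you actually invoke is the separate classical fact that a finite\nbd codimensional ideal of $\Cinf(\red M)$ has finite zero locus (equivalently, finitely many maximal ideals above it, all of them point evaluations). Also, when you assert that the $\maxid_{M,x_i}$ are \emph{all} the maximal ideals of $\sheaf(M)$ over $J$, you are implicitly using that any maximal ideal of $B$ is automatically graded --- which holds because the ideal generated by $B_1$ is nilpotent and hence lies in the Jacobson radical. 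You gesture at this with your remark about the nilradical, and it follows immediately, but it is the one place where the super setting requires a word beyond the purely commutative argument. Neither point affects validity.
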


\begin{proposition} \label{prop:distributions}
%\gcomment{anche qui si usano gli stalk, ma non mi sembra un problema
%se si mette un'osservazione prima}%
Let $A$ be a super Weil algebra and $A^*$ its dual. Let
%\[
$    x_A \colon \stalk{M}{x} \to A$
%\]
be an $A$\nbd point near $x \in \topo{M}$ (see obs. \ref{obs:stalk}).
If $\omega \in A^*$, then
%\[
$    \omega \circ x_A \in \distr{M}{x} \text{.}$
%\]
Moreover each element of $\distr[k]{M}{x}$ can be obtained in this
way with
%\[
 $   A = \stalk{M}{x} / \maxid_x^{k+1}$ %\isom \kspoly{p}{q} / \maxid_0^{k+1}$
%\]
(see lemma \ref{lemma:SWA}).
\end{proposition}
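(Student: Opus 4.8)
The plan is to verify the two assertions in order. First, for the claim that $\omega \circ x_A \in \distr{M}{x}$, I would use the observation (already established in \ref{obs:stalk} together with the proof of Proposition \ref{prop:valori_assegnati}, via Lemma \ref{lemma:SWA}) that every $A$\nbd point $x_A$ annihilates some power $\maxid_x^k$ of the maximal ideal: this is exactly point 2 in the proof of \ref{prop:valori_assegnati}. So $x_A(\maxid_x^k) = 0$ for some $k$, hence $\omega \circ x_A$ kills $\maxid_x^k$ as well, which by definition puts $\omega \circ x_A$ in $\distr[k]{M}{x} \subseteq \distr{M}{x}$. (One should also note $\omega \circ x_A$ is linear, being a composite of the linear map $\omega$ with the linear map $x_A$; it need not be an algebra map, but that is irrelevant since distributions are merely linear functionals.)

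For the converse, fix $k$ and set $A \coloneqq \stalk{M}{x}/\maxid_x^{k+1}$; by Lemma \ref{lemma:SWA} (equivalence of (1) and (2)) this is a super Weil algebra, since $\maxid_x^{k+1}$ is a graded ideal containing a power of the maximal ideal. Let $\pi \colon \stalk{M}{x} \to A$ be the quotient map; this is a superalgebra morphism and, because $\pi$ composed with the projection $A \to \R$ is $\ev_x$, it is an $A$\nbd point near $x$. Now given any $v \in \distr[k]{M}{x}$, i.e.\ a linear functional on $\sheaf(M)$ — equivalently, by \ref{obs:stalk}, on $\stalk{M}{x}$ — vanishing on $\maxid_x^{k+1}$ (note $v(\maxid_x^k)=0$ in the definition, but $\maxid_x^k \supseteq \maxid_x^{k+1}$; one may need to take $A = \stalk{M}{x}/\maxid_x^{k+1}$ precisely to absorb order exactly $k$, as the observation's coordinate formula shows order\nbd$k$ distributions involve $|\nu|+|J| \le k$ and hence vanish on $\maxid_x^{k+1}$), the functional $v$ factors through the quotient $\stalk{M}{x}/\maxid_x^{k+1} = A$, giving a well\nbd defined linear map $\bar{v} \colon A \to \R$, i.e.\ an element $\omega \coloneqq \bar v \in A^*$. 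Then by construction $\omega \circ \pi = v$, which exhibits $v$ in the desired form with $x_A = \pi$.

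The only subtlety — and the step I would be most careful about — is matching the filtration degrees: the definition writes $\distr[k]{M}{x}$ as the functionals killing $\maxid_x^k$, so to realize all of $\distr[k]{M}{x}$ one wants $x_A$ to kill $\maxid_x^k$, which suggests $A = \stalk{M}{x}/\maxid_x^k$ rather than $\maxid_x^{k+1}$; I would reconcile the off\nbd by\nbd one with the statement's $A = \stalk{M}{x}/\maxid_x^{k+1}$ by checking against the explicit coordinate description in the Observation (where an order\nbd$k$ distribution is a combination of $\ev_x \partial^\nu_x \partial^J_\theta$ with $|\nu|+|J|\le k$, and such an operator visibly annihilates $\maxid_{M,x}^{k+1}$). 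Beyond that bookkeeping, everything reduces to the universal property of the quotient vector space and to Lemma \ref{lemma:SWA}, so no genuinely hard estimate is involved.
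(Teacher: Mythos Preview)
Your proposal is correct and follows essentially the same route as the paper. The paper's proof is a two-line version of yours: for the first assertion it observes that $x_A(\maxid_x)\subseteq\nil{A}$ (equivalently, $\ker x_A\supseteq\maxid_x^{k+1}$ when $A$ has height $k$), and for the converse it writes the factorization $\stalk{M}{x}\xrightarrow{\pr}\stalk{M}{x}/\maxid_x^{k+1}\xrightarrow{\omega}\R$ exactly as you do; your extra care with the off-by-one between $\maxid_x^k$ and $\maxid_x^{k+1}$ is warranted and the paper simply does not comment on it.
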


\begin{proof}
If $A$ has height $k$, since $x_A(\maxid_x) \subseteq \nil{A}$, $\omega \circ x_A \in
\distr[k]{M}{x}$. If vice versa $v \in \distr[k]{M}{x}$, it factorizes through
%\[
$    \stalk{M}{x} \stackrel{\pr}{\to} \stalk{M}{x} / \maxid_x^{k+1} \stackrel{\omega}{\to} {\R}$
%\]
with a suitable $\omega$.
\end{proof}

In the next observation we relate the finite support distributions
and their interpretation via the Weil--Berezin functor,
to the tangent superspace.

\begin{observation} \label{obs:tangent_bundle}
Let us first recall that the tangent superspace to a smooth supermanifold
$M$ at a point $x$ is the super vector space consisting of all the
$\ev_x$\nbd derivations of  $\sheaf(M)$:
\[
    T_x(M) \coloneqq \set{ v \colon \sheaf_{M} \to {\R} | v(f\cdot g)=v(f)\ev_x(g)+\ev_x(f)v(g) } \text{.}
\]

As in the classical setting we can recover the tangent space by
using the super Weil algebra of \emph{super dual numbers} $A =
{\R}(e,\epsilon)={\R}[e,\epsilon]/ \langle e^2,
e\epsilon, \epsilon^2 \rangle$ %be the super Weil algebra of super
%dual numbers
(see example \ref{example:SDN}). If $x_A \in M_{A}$ is near $x$
and $s,t \in \sheaf{(M)}$, we have
%\[
$    x_A(st) = \ev_x(st) + x_e(st) e + x_\epsilon(st) \epsilon$
%\]
with $x_e,x_\epsilon \colon \sheaf{(M)} \to {\R}$. On the
other hand
\begin{align*}
    x_A(st)
    &= x_A(s) x_A(t)=
    \ev_x(s) \ev_x(t)
        + \big( x_e(s) \ev_x(t) + \ev_x(s) x_e(t) \big) e \\
        &\qquad + \big( x_\epsilon(s) \ev_x(t) + \ev_x(s) x_\epsilon(s) \big) \epsilon \text{.}
\end{align*}
Then $x_e$ (resp.\ $x_\epsilon$) is a derivation  that
is zero on odd (resp.\ even) elements and so $x_e \in T_x(M)_0$
(resp.\ $x_\epsilon \in T_x(M)_1$). The map
\[
    \begin{aligned}
T(M) \coloneqq \bigsqcup_{x \in \topo{M}} T_x(M) &\to
M_{{\R}(e,\epsilon)}, \qquad
        v_0 + v_1 &\mapsto \ev_x + v_0 e + v_1 \epsilon
    \end{aligned}
\]
(with $v_i \in T_x(M)_i$) is an isomorphism of vector bundles over
$\red{M} \isom M_{\R}$, where $\red{M}$ is the classical
manifold associated with $M$, as in {section
\ref{sec:basic_def}} (see also \cite[ch.~8]{KMS} for an exhaustive
exposition in the classical case). The reader should not confuse
$T(M)$, which is the classical bundle obtained by the union of all
the tangent superspaces at the different points of $\topo{M}$, with
$\sheaf[T]_M$, which is the super vector bundle of all the
derivations of $\sheaf_M$.
\end{observation}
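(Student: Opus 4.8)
The plan is to verify three things: that the displayed map $v_0+v_1 \mapsto \ev_x + v_0 e + v_1\epsilon$ is well defined and takes values where claimed, that it is a bijection on each fiber, and that it is a smooth map of vector bundles over $\red M \isom M_\R$. The first item is already done in the paragraph preceding the statement: writing an arbitrary $x_A \in M_{\R(e,\epsilon)}$ near $x$ as $x_A(s) = \ev_x(s) + x_e(s)e + x_\epsilon(s)\epsilon$ and expanding $x_A(st) = x_A(s)x_A(t)$ forces $x_e$ to be an even $\ev_x$\nbd derivation and $x_\epsilon$ an odd one; so the target of the map is exactly $M_{\R(e,\epsilon),x}$, and conversely every element of $M_{\R(e,\epsilon),x}$ decomposes this way. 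This simultaneously gives injectivity (the coefficients $x_e, x_\epsilon$ are read off uniquely from $x_A$) and surjectivity (given $v_0 \in T_x(M)_0$, $v_1 \in T_x(M)_1$, the formula $s \mapsto \ev_x(s) + v_0(s)e + v_1(s)\epsilon$ is multiplicative by the same computation run backwards, using $e^2 = e\epsilon = \epsilon^2 = 0$, hence is a genuine $A$\nbd point). The base point of $\ev_x + v_0 e + v_1\epsilon$ is $x$ because $\pr_A \circ (\ev_x + v_0 e + v_1 \epsilon) = \ev_x$, so the map respects the projections to $\red M$ and $M_\R$ respectively, i.e.\ it is fiberwise.

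Next I would check linearity on fibers: the space $T_x(M)$ is an $\R$\nbd vector space under pointwise operations on derivations, and $M_{\R(e,\epsilon),x}$ sits inside $\Hom_\SAlg(\sheaf(M), A)$; its fiber over $x$ is an affine space modeled on $\{ s \mapsto v_0(s)e + v_1(s)\epsilon \}$, which is visibly $\R$\nbd linear in $(v_0,v_1)$. So the map is a fiberwise linear isomorphism, and one identifies the tangent space functorially: $T_x(M) = (\maxid_{M,x}/\maxid_{M,x}^2)^*$ as super vector spaces (Hadamard's lemma, \ref{lemma:polynomials}), so both bundles have fiber dimension $p|q$.

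For the vector bundle structure it is cleanest to work in a chart $U$ with coordinates $\{x_i,\theta_j\}$. By theorem \ref{theor:azerolinear} (or directly by observation \ref{obs:coordinates}), $U_A$ for $A = \R(e,\epsilon)$ is identified with an open subset of $A_0^p \times A_1^q$ via $x_A \mapsto (x_A(x_i), x_A(\theta_j))$; since $A_0 = \R \oplus \R e$ and $A_1 = \R\epsilon$, this gives local coordinates $(\red\X_i, x_e(x_i), x_\epsilon(\theta_j))$ on $U_{\R(e,\epsilon)}$ (note $x_e$ vanishes on odd coordinates and $x_\epsilon$ on even ones, as shown above), exhibiting it as $U \times \R^{p|q}$ locally — and the transition maps, coming from superdiffeomorphisms via lemma \ref{lemma:A0_smooth_nat_tr}, are fiberwise linear, being the differentials of the coordinate changes, exactly as in the classical case. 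On the $T(M)$ side the standard chart identification $T(U) \isom U \times \R^{p|q}$ sends $v$ to its components on $\partial_{x_i}|_x, \partial_{\theta_j}|_x$; under our map $v_0 + v_1 \mapsto \ev_x + v_0 e + v_1\epsilon$ these components become precisely $x_e(x_i) = v_0(x_i)$ and $x_\epsilon(\theta_j) = v_1(\theta_j)$, so in these charts the map is literally the identity on $U \times \R^{p|q}$. Hence it is a smooth bundle isomorphism locally, and since the construction is manifestly coordinate-independent (it is defined intrinsically in terms of the superalgebra $\sheaf(M)$), the local isomorphisms glue to a global isomorphism of vector bundles over $\red M$.

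The only mildly delicate point — and the one I would spend the most care on — is the bookkeeping in the chart argument: checking that the coordinate-change maps on $U_{\R(e,\epsilon)}$ induced by superdiffeomorphisms really are fiberwise $\R$\nbd linear and agree with the usual Jacobian action on $T(U)$, i.e.\ that eq.\ \eqref{eq:nat_tr_from_morph} specialized to $A = \R(e,\epsilon)$ reproduces the classical chain rule on tangent vectors (with the odd block acting by the odd part of the super Jacobian). This is a direct unwinding of \eqref{eq:nat_tr_from_morph}, using that $\nil\X_i = x_e(x_i)e$ has square zero, so only the first-order terms survive; everything else is routine. A cleaner alternative that avoids charts entirely is to observe that $M \mapsto M_{\R(e,\epsilon)}$ and $M \mapsto T(M)$ are both functors $\SMan \to \cAoMan$ commuting with open immersions and products, agree on superdomains by the explicit computation above, and hence agree in general by the gluing argument used in the proof of the representability proposition; I would present the chart version as the main line and mention this as a remark.
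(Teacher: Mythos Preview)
Your proposal is correct and follows the same approach as the paper: the core is the same computation expanding $x_A(st)=x_A(s)x_A(t)$ in the basis $1,e,\epsilon$ to identify the coefficients $x_e,x_\epsilon$ as the even and odd parts of a tangent vector at $x$. The paper treats this as an observation rather than a theorem and, after that computation, simply asserts the bundle isomorphism with a reference to \cite[ch.~8]{KMS} for the classical analogue; your chart argument and the check that the transition maps act by the super Jacobian are a welcome fleshing-out of what the paper leaves implicit, but they do not constitute a different route.
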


%\subsection{Transitivity theorem and applications} \label{subsec:trans_th}

We now want to give a brief account on how we can
perform differential calculus using the language of $A$\nbd
points. The essential ingredient is the super version of the
transitivity theorem.% that we discuss below.

\medskip

\begin{theorem}[Weil transitivity theorem] \label{theor:transitivity}
Let $M$ be a smooth supermanifold, $A$ a super
Weil algebra and $B_0$ a purely even Weil algebra, both real. Then
%\[
 $   {(M_A)}_{B_0} \isom M_{A \otimes B_0}$
%\]
as $(A_0 \otimes B_0)$\nbd manifolds.
\end{theorem}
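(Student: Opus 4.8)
The plan is to reduce to the case of superdomains and use the explicit coordinate descriptions already developed. First I would fix a point and work locally: since $M_A \cong \bigsqcup_{x \in \topo{M}} \Hom_\SAlg(\stalk{M}{x}, A)$ (obs.\ \ref{obs:stalk}) and both sides of the claimed isomorphism are (by theorem \ref{theor:azerolinear}) $A_0 \otimes B_0$-manifolds whose construction is via local charts, it suffices to establish the isomorphism when $M = U$ is a superdomain in $\R^{p|q}$, compatibly with coordinate changes. The two points of $M$ both project onto $\topo{M}$: an element of $(M_A)_{B_0}$ lies over a point of $(M_A)_\R \cong M_\R \cong \topo{M}$, and an element of $M_{A\otimes B_0}$ lies over a point of $M_\R$ via the projection $A \otimes B_0 \to \R$; so the comparison can be done fibrewise over $\topo{M}$, and in fact it is enough to exhibit a natural bijection and then check it is $(A_0 \otimes B_0)$-smooth.

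Next I would produce the bijection at the level of sets. An element of $M_{A\otimes B_0}$ is a superalgebra map $x \colon \sheaf(M) \to A \otimes B_0$. Since $B_0$ is purely even, $A \otimes B_0$ is a super Weil algebra with $(A\otimes B_0)_0 = A_0 \otimes B_0$ and $(A\otimes B_0)_1 = A_1 \otimes B_0$, so this makes sense. On the other hand an element of $(M_A)_{B_0}$ is a morphism $\sheaf(M_A) \to B_0$ of (purely even) algebras; here $\sheaf(M_A)$ is the structure sheaf of the $A_0$-manifold $M_A$ in the ordinary sense. Using the chart identification $U_A \cong \topo{U} \times \nil{A_0}^p \times A_1^q$ from observation \ref{obs:coordinates}, the coordinate functions on $U_A$ are exactly the components $\pair{a_k^*}{\X_i}$, $\pair{a_k^*}{\T_j}$ of the $A$-valued coordinates $\X_i = x_A(x_i)$, $\T_j = x_A(\theta_j)$. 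Giving a $B_0$-point of $U_A$ is therefore, via the chart theorem \ref{th:morphisms}, the same as giving the images in $A_0 \otimes B_0$, resp.\ $A_1 \otimes B_0$, of $x_1,\dots,x_p$, resp.\ $\theta_1,\dots,\theta_q$, with the correct base point. By proposition \ref{prop:valori_assegnati}, this data is exactly what determines a superalgebra map $\sheaf(U) \to A \otimes B_0$. So the correspondence is: a $B_0$-point $y$ of $M_A$ evaluated on the coordinate functions of $M_A$ produces $A\otimes B_0$-valued images of $x_i,\theta_j$, hence a point of $M_{A\otimes B_0}$; and conversely. The main content is checking this is well defined independently of the chart, which follows because coordinate changes on $U_A$ are of the form $(h)_A$ for superdiffeomorphisms $h$ (theorem \ref{theor:azerolinear}), and $(h)_{A\otimes B_0}$ is compatible with the composition of $(h)_A$ with $B_0$-points by functoriality in the Weil algebra argument.

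Then I would upgrade the set bijection to an isomorphism of $(A_0 \otimes B_0)$-manifolds. In a chart both sides are identified with $\topo{U} \times \nil{(A_0\otimes B_0)}^p \times (A_1 \otimes B_0)^q$ sitting inside $(A_0\otimes B_0)^p \times (A_1\otimes B_0)^q$ — for the left side one iterates the two chart identifications ($M_A$ over $\topo{U}\times\nil{A_0}^p\times A_1^q$, then its $B_0$-points over that base times $\nil{B_0}$-powers), and one checks the resulting bookkeeping of nilpotent parts matches $\nil{(A\otimes B_0)} = \nil A \otimes B_0 + \R \otimes \nil{B_0}$. Under these identifications the transition map between the two descriptions is the identity on the underlying sets and is manifestly smooth with $(A_0\otimes B_0)$-linear differential (it is essentially an associativity-of-tensor-product rearrangement, hence linear). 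Functoriality statements — that the isomorphism is natural in $A$ and in $B_0$, and that it intertwines the $\rho$-smooth maps induced by supermanifold morphisms and Weil algebra morphisms — are then formal consequences of the uniqueness clauses in theorem \ref{theor:azerolinear} and proposition \ref{prop:valori_assegnati}.

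The step I expect to be the genuine obstacle is not the algebra but the identification of $\sheaf(M_A)$, the ordinary structure sheaf of the $A_0$-manifold $M_A$, with something one can compute: one must be careful that the chart theorem is being applied to $M_A$ as an ordinary (real) manifold, so that $B_0$-points of it are controlled by images of the coordinate functions, and one must verify that the coordinate functions so obtained recombine into $A$-valued — and then $A\otimes B_0$-valued — coordinates in a way that respects the even/odd splitting (this is where purely-even-ness of $B_0$ is used: it is exactly what guarantees $A_1 \otimes B_0 = (A\otimes B_0)_1$, so that odd coordinates map to odd elements). Once that dictionary is set up cleanly the rest is routine verification, most of which can be quoted from proposition \ref{prop:valori_assegnati}, theorem \ref{theor:azerolinear}, and lemma \ref{lemma:A0_smooth_nat_tr}.
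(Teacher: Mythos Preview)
Your argument is correct and complete in outline; the difference from the paper is one of organization rather than substance. The paper defines the comparison map globally and intrinsically first: it introduces $\tau \colon \sheaf(M) \to A \otimes \sheaf(M_A)$ sending a section $s$ to the $A$\nbd valued function $\hat{s} \colon y_A \mapsto y_A(s)$, and then sets $\xi \colon (M_A)_{B_0} \to M_{A\otimes B_0}$ by $\xi(X)(s) = (\id_A \otimes X)\tau(s)$. Because $\xi$ is written down without any reference to coordinates, the coordinate\nbd independence you check by hand is automatic, and the only work left is the local computation showing $h_{A\otimes B_0} \circ \xi \circ (h_A)_{B_0}^{-1}$ is the identity on the model space --- exactly the chart identification you describe. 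Your approach trades that slick global definition for a direct chart\nbd by\nbd chart construction, which is perhaps more down\nbd to\nbd earth but forces you to verify gluing explicitly; the paper's $\tau$ is precisely the device that packages the passage from the real coordinate functions $\pair{a_k^*}{\X_i}$, $\pair{a_k^*}{\T_j}$ on $M_A$ back to $A$\nbd valued data on $M$, so your ``dictionary'' is a local incarnation of $\tau$. Both routes use the evenness of $B_0$ at the same point and for the same reason you identify.
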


\begin{proof}
Let $\sheaf_{M_A}$ and $\sheaf_{M_A}^A$ be the sheaves of smooth
maps from the classical manifold $M_A$ to ${\R}$ and $A$
respectively. Clearly $\sheaf_{M_A}^A \isom A \otimes \sheaf_{M_A}$
through the map $f \mapsto \sum_i a_i \otimes \pair{a_i^*}{f}$,
where $\set{a_i}$ is a homogeneous basis of $A$.

Consider now the map
%\[
%    \begin{aligned}
$\tau \colon \sheaf(M) \to \sheaf(M_A)^A \isom A \otimes \sheaf{(M_A)}$,
$\tau(s)=\hat{s}$,
%    \end{aligned}
%\]
where, if $s \in \sheaf(M)$,
%\[
$    \hat{s} \colon y_A \mapsto y_A(s)$
%\]
for all $y_A \in M_A$.

Recalling that
%\begin{align*}

\medskip

\centerline{
    ${(M_A)}_{B_0} \coloneqq \Hom_{\SAlg}(\sheaf{(M_A)},B_0) \qquad
    M_{A \otimes B_0} \coloneqq  \Hom_{\SAlg}(\sheaf{(M)},A \otimes B_0)
\text{,}$
}
%\end{align*}

\medskip

\noindent we can define a map
%\[
%    \begin{aligned}
       $ \xi \colon {(M_A)}_{B_0} \to M_{A \otimes B_0}$,
%        X &\mapsto \xi(X)
%    \end{aligned}
%\]
%setting
%\[
    $\xi(X) \colon s\mapsto (\id_A \otimes X)\tau({s}) \text{.}$
%\]
This definition is well-posed since $\xi(X)$ is a superalgebra map,
as one can easily check.
Fix now a chart $(U,h)$, $h \colon U \to {\R}^{p|q}$, in $M$
and denote by $(U_A,h_A)$, $\big({(U_A)}_{B_0}, {(h_A)}_{B_0}\big)$
and $(U_{A\otimes B_0}, h_{A\otimes B_0})$ the corresponding charts
lifted to $M_A$, ${(M_A)}_{B_0}$ and $M_{A \otimes B_0}$
respectively. If $\set{e_1,\ldots,e_{p+q}}$ is a homogeneous basis
of ${\R}^{p|q}$, we have (here, according to observation
\ref{obs:coordinates}, we tacitly use the identification
${\R}^{p|q}_A \isom (A \otimes {\R}^{p|q})_0$):
%\[
%    \begin{aligned}

\medskip

\centerline{
$        {(h_A)}_{B_0} \colon {(U_A)}_{B_0}
\to (A \otimes B_0 \otimes {\R}^{p|q})_0, \qquad
        X \mapsto
\sum_{i,j} a_i \otimes X \big( h_A^*(a_i^* \otimes e_j^*) \big) \otimes e_j$
%    \end{aligned}
%\]
}

\medskip

\centerline{
%\[
%    \begin{aligned}
$        h_{A\otimes B_0} \colon U_{A \otimes B_0}
\to (A \otimes B_0 \otimes {\R}^{p|q})_0, \qquad
        Y \mapsto \sum_k Y \big( h^*(e_k^*) \big) \otimes e_k \text{.}$
}
%    \end{aligned}
%\]

\medskip

Then, since
%\begin{equation*}
$    \xi(X) \big( h^*(e_k^*) \big)
    = (\id \otimes X) \big( \widehat{\smash{h^*}(e_k^*)} \big)
    = (\id \otimes X) \big( \textstyle\sum_i a_i \otimes h_A^*(a_i^* \otimes e_k^*) \big) \text{,}$
%\end{equation*}
we have
%\[
$    h_{A\otimes B_0} \circ \xi \circ {(h_A)}_{B_0}^{-1} = \id_{{(h_A)}_{B_0}({(U_A)}_{B_0})} \text{.}$
%\]
This entails in particular that $\xi$ is a local $(A_0 \otimes
B_0)$\nbd diffeomorphism. The fact that it is a global
diffeomorphism follows noticing that it is fibered over the
identity.
\end{proof}

We want to briefly explain some applications of the Weil
transitivity theorem. %Let $M$ be a smooth supermanifold and let $A$
%be a \gcomment{tolto: ``real''}super Weil algebra.

\begin{definition}
If $x_A \in M_A$, we define the space of \emph{$x_A$\nbd linear
derivations} of $M$ (\emph{$x_A$\nbd derivations} for short) as the
$A$\nbd module
\begin{align*}
    \Der_{x_A} \big( \sheaf(M),A \big)
    \coloneqq \Big\{\, & X \in \HOM \big( \sheaf(M),A \big) \,\Big|\, \forall s,t \in \sheaf(M) , \\
    & X(st)= X(s) x_A(t) + (-1)^{\p{X}\p{s}}x_A(s)X(t) \,\Big\} \text{.}
\end{align*}
\end{definition}

\begin{proposition}
The tangent superspace at $x_A$ in $M_A$ canonically identifies with
$\Der_{x_A} \big( \sheaf(M), A \big)_0$.
\end{proposition}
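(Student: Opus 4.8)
The plan is to identify both sides with suitable sets of "points'' and to match them via the Weil transitivity theorem. First I would introduce the algebra of even dual numbers $\R(e) = \R[e]/\langle e^2 \rangle$, which is a purely even Weil algebra, and recall the classical fact (already exploited in observation \ref{obs:tangent_bundle} in the guise of super dual numbers) that for any classical manifold $N$ one has a canonical identification $T(N) \isom N_{\R(e)}$ compatible with the bundle projection $T(N) \to N$ and the base-point map $N_{\R(e)} \to N_\R \isom N$ (the latter induced by $\R(e) \to \R$, $e \mapsto 0$). Applying this to the classical manifold $N = M_A$ exhibits $T_{x_A}(M_A)$ as the set of $\R(e)$-points of $M_A$ near $x_A$.

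Next I would invoke Theorem \ref{theor:transitivity} with $B_0 = \R(e)$, obtaining a canonical isomorphism $(M_A)_{\R(e)} \isom M_{A \otimes \R(e)}$. The crucial feature, which I would extract from the construction of the map $\xi$ in that proof, is that this isomorphism is fibered over $M_A$: the base point of $\xi(X)$ in $M_A$ equals the base point of $X$. Restricting over $x_A$, it follows that $T_{x_A}(M_A)$ is canonically in bijection with the set of $(A \otimes \R(e))$-points of $M$ whose projection along $A \otimes \R(e) \to A$ (induced by $e \mapsto 0$) equals $x_A$.

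Finally I would describe this last set explicitly. Writing $A \otimes \R(e) = A \oplus Ae$ with $e$ even and $e^2 = 0$, a superalgebra morphism $\sheaf(M) \to A \oplus Ae$ lying over $x_A$ is forced to have the form $s \mapsto x_A(s) + X(s)\,e$ for a linear map $X \colon \sheaf(M) \to A$; since $e$ is even, $X$ preserves parity, i.e.\ $X \in \HOM(\sheaf(M),A)_0$. Expanding $(x_A(s) + X(s)e)(x_A(t) + X(t)e)$ and using $e^2 = 0$ shows that being a morphism is equivalent to $X(st) = X(s) x_A(t) + x_A(s) X(t)$ for all $s,t$, which --- $X$ being even, so that $(-1)^{\p{X}\p{s}} = 1$ --- is precisely the defining condition of $\Der_{x_A}\big(\sheaf(M),A\big)_0$. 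Composing the three identifications then yields the claimed canonical isomorphism. The one genuinely delicate point is the base-point compatibility of the Weil transitivity isomorphism: without it, ``near $x_A$'' on the tangent-bundle side would not visibly correspond to ``$A$-component $= x_A$'' on the derivation side. Everything else is routine, though I would double-check the parity bookkeeping in passing to the even part, and note that in a chart $U_A \subseteq A_0^p \times A_1^q$ the identification simply reads off the entries $X(x_i) \in A_0$, $X(\theta_j) \in A_1$ of a tangent vector, confirming that it is an isomorphism of $A_0$-modules.
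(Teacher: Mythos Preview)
Your proposal is correct and follows essentially the same route as the paper: identify $T(M_A)$ with $(M_A)_{\R(e)}$ via the classical dual-number description, apply the Weil transitivity theorem with $B_0 = \R(e)$ to pass to $M_{A\otimes\R(e)}$, and then read off that an $(A\otimes\R(e))$-point over $x_A$ is exactly $x_A + Xe$ with $X$ an even $x_A$-derivation. Your explicit emphasis on the base-point compatibility of $\xi$ and the parity bookkeeping is a welcome clarification of points the paper leaves implicit.
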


\begin{proof}
If $\R(e)$ is the algebra of dual number (see {example
\ref{example:SDN}}), $(M_A)_{\R(e)}$ is isomorphic, as a vector
bundle, to the tangent bundle $T(M_A)$, as we have seen in
observation \ref{obs:tangent_bundle}. Due to theorem
\ref{theor:transitivity}, we thus have an isomorphism
\[
    \xi \colon T(M_A) \isom {(M_A)}_{\R(e)} \to M_{A\otimes\R(e)} \text{.}
\]
On the other hand, it is easy to see that $x_{A\otimes \R(e)} \in
M_{A\otimes\R(e)}$ can be written as $x_{A\otimes \R(e)} = x_A
\otimes 1 + v_{x_A} \otimes e$, where $x_A \in M_A$ and $v_{x_A}
\colon \sheaf(M) \to A$ is a parity preserving map satisfying the
following rule for all $s,t \in \sheaf(M)$:
\[
    v_{x_A}(st) = v_{x_A}(s) \, x_A(t) + x_A(s) \, v_{x_A}(t) \text{.}
\]
Then each tangent vector on $M_A$ at $x_A$ canonically identifies a
even $x_A$\nbd derivation and, vice versa, each such derivation
canonically identifies a tangent vector at $x_A$.
\end{proof}

We conclude studying more closely the structure of $\Der_{x_A} \big(
\sheaf(M), A \big)$. The following proposition describes it
explicitly.

Let $K$ be a right $A$\nbd module and let $L$ be a left $B$\nbd
module for some algebras $A$ and $B$. Suppose moreover that an
algebra morphism $\rho \colon B \to A$ is given. One defines the
$\rho$\nbd tensor product $K \otimes_\rho L$ as the quotient of the
vector space $K \otimes L$ with respect to the equivalence relation
%\[
$    k \otimes b \cdot l \sim k \cdot \rho(b) \otimes l$,
%\]
for all $k \in K$, $l \in L$ and $b \in B$.

Moreover, if $M$ is a supermanifold, we denote by $\sheaf[T]_M$ the
\emph{super tangent bundle} of $M$, i.~e.\ the sheaf defined by
$\sheaf[T]_M \coloneqq \Der(\sheaf_M)$.
%\comment{Qua non so togliere
%gli stalks\\\green{Gigi: forse se da qualche parte si mette
%l'osservazione sugli stalk non \`{e} un dramma~\ldots}}

\begin{proposition}
Let $M$ be a smooth supermanifold and let $x \in \topo{M}$. Denote
$\stalk[T]{M}{x}$ the germs of vector fields at $x$. One has the
identification of left $A$\nbd modules
\[
    \Der_{x_A} \big( \sheaf(M), A \big)
    \isom A \otimes T_{\red{x_A}}(M)
    \isom A \otimes_{x_A} \stalk[T]{M}{\red{x_A}} \text{.}
\]
\end{proposition}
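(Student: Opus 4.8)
The plan is to establish the two isomorphisms in turn, working in local coordinates and then checking that the identifications are independent of the chart, hence globalize.

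First I would treat the left-hand identification $\Der_{x_A}(\sheaf(M),A) \isom A \otimes T_{\red{x_A}}(M)$. Fix a chart $(U,h)$ around $x = \red{x_A}$ with coordinates $\set{x_i,\theta_j}$. By observation \ref{obs:stalk} any $x_A$\nbd derivation $X$ factors through the stalk $\stalk{M}{x}$, and by Hadamard's lemma \ref{lemma:polynomials} together with the nilpotency of $\nil{A}$ (as in the proof of prop.\ \ref{prop:valori_assegnati}) such an $X$ is completely determined by the values $X(x_i) \in A$ and $X(\theta_j) \in A$, which may be prescribed arbitrarily. This gives a bijection $\Der_{x_A}(\sheaf(M),A) \isom A^{p+q}$ of left $A$\nbd modules. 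On the other hand $T_{\red{x_A}}(M)$ has basis $\set{\ev_x \partial/\partial x_i, \ev_x \partial/\partial \theta_j}$, so $A \otimes T_{\red{x_A}}(M) \isom A^{p+q}$ as well, and one checks the composite sends $X$ to $\sum_i X(x_i) \otimes (\ev_x \partial/\partial x_i) + \sum_j X(\theta_j) \otimes (\ev_x \partial/\partial \theta_j)$. The key point — and the place where a genuine (if short) computation is needed — is that this correspondence does not depend on the choice of coordinates: under a coordinate change the Leibniz rule with respect to $x_A$ forces exactly the chain rule transformation, with the Jacobian evaluated at the base point $\red{x_A}$ (the nilpotent corrections drop out because a derivation is already determined by its linear part at $x$), which matches the transformation law for $A \otimes T_{\red{x_A}}(M)$. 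Alternatively, and more cleanly, one can invoke observation \ref{obs:tangent_bundle} and the Weil transitivity theorem \ref{theor:transitivity}: the preceding proposition identifies $T_{x_A}(M_A)$ with $\Der_{x_A}(\sheaf(M),A)_0$ via $M_{A\otimes\R(e)}\isom (M_A)_{\R(e)}$, and applying the same reasoning with a general (not necessarily even) parity shows $\Der_{x_A}(\sheaf(M),A)$ as a whole is what one gets, while $T(M_A)$ restricted to the fibre over $x_A$ is, through the chart $(h_A)$, identified with $A_0^p\times A_1^q$-worth of tangent data — and chasing the identifications back through $h_A^*$ produces $A\otimes T_{\red{x_A}}(M)$.

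For the second identification $A \otimes T_{\red{x_A}}(M) \isom A \otimes_{x_A} \stalk[T]{M}{\red{x_A}}$, I would argue as follows. The stalk of vector fields $\stalk[T]{M}{x} = \Der(\sheaf_M)_x$ is a free module over the stalk $\stalk{M}{x}$ with basis $\set{\partial/\partial x_i, \partial/\partial \theta_j}$. The $\rho$\nbd tensor product construction is being applied with $\rho = x_A \colon \stalk{M}{x} \to A$ (note $x_A$ kills no generator in a way that matters — it is surjective onto $A$ modulo nilpotents), $A$ as a right module over itself, and $\stalk[T]{M}{x}$ as a left $\stalk{M}{x}$\nbd module. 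Since $\stalk[T]{M}{x}$ is free of rank $p+q$ over $\stalk{M}{x}$, the $\rho$\nbd tensor product $A \otimes_{x_A} \stalk[T]{M}{x}$ is free of rank $p+q$ over $A$, with basis the images of $\partial/\partial x_i, \partial/\partial \theta_j$. Evaluation of a germ of vector field at $x$ sends these same generators to the basis of $T_x(M)$, so the obvious map $a \otimes_{x_A} D \mapsto a \otimes (\ev_x \circ D)$, i.e.\ $A \otimes_{x_A} \stalk[T]{M}{x} \to A \otimes T_x(M)$, is an $A$\nbd linear isomorphism; well-definedness is precisely the relation $a\cdot x_A(f)\otimes(\ev_x\circ D)\sim a\otimes(\ev_x\circ fD) = a\otimes \ev_x(f)(\ev_x\circ D)$ built into the definition of $\otimes_\rho$ together with $\ev_x = \pr_A\circ x_A$. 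Finally I would note that both constructions are natural in the chart, so the isomorphisms patch, and that the composite $\Der_{x_A}(\sheaf(M),A) \to A \otimes_{x_A}\stalk[T]{M}{\red{x_A}}$ has the chart-free description $X \mapsto \sum a_\alpha \otimes_{x_A} D_\alpha$ whenever $X = \sum x_A(\,\cdot\,)$-combinations — but the coordinate description suffices for the proof.

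The main obstacle is the coordinate-independence check in the first isomorphism: one must verify that the assignment $X \mapsto (X(x_i), X(\theta_j))$ intertwines the change-of-chart action on $\Der_{x_A}$ with the change-of-chart action on $A \otimes T_{\red{x_A}}(M)$. This is not deep but requires carefully expanding $X(y_k)$ for new coordinates $y_k = y_k(x,\theta)$ using the formal Taylor expansion \eqref{eq:formaltaylor} and the $x_A$\nbd Leibniz rule, and observing that all the higher-order (nilpotent) terms either cancel or are absorbed, leaving only the Jacobian of the transition map evaluated at $\red{x_A}$ acting on the $A$\nbd coefficients — exactly the transformation governing $A \otimes T_{\red{x_A}}(M)$. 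Everything else (freeness, well-definedness of the $\rho$\nbd tensor map, $A$\nbd linearity) is routine bookkeeping.
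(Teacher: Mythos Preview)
Your argument has a genuine gap, and it is the same mistake appearing twice: you conflate $x_A$ with $\ev_{\red{x_A}}$. For the first identification you claim that under a coordinate change the Jacobian acting on $(X(x_i),X(\theta_j))$ is ``evaluated at the base point $\red{x_A}$'' with the nilpotent corrections dropping out. This is false. The chain rule for an $x_A$\nbd derivation (which is exactly formula \eqref{eq:x_A-derivation}) gives $X(y_k)=\sum_i x_A(\partial y_k/\partial x_i)\,X(x_i)+\dots$, so the transition matrix on $\Der_{x_A}$ is the $x_A$\nbd image of the Jacobian, whereas the transition matrix on $A\otimes T_{\red{x_A}}(M)$ is the $\ev_{\red{x_A}}$\nbd image. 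Concretely, take $M=\R$, $x=0$, $y=x+x^2/2$, and $x_A(x)=a\in\nil{A}_0$: then $X(y)=(1+a)X(x)$, but $\ev_0\partial/\partial y=\ev_0\partial/\partial x$, so your proposed map $X\mapsto X(x)\otimes\ev_0\partial/\partial x$ is coordinate-dependent. The same error recurs in your second isomorphism: the map $a\otimes_{x_A}D\mapsto a\otimes(\ev_x\circ D)$ is \emph{not} well defined on the $\rho$\nbd tensor product, because $a\,x_A(f)\otimes_{x_A}D$ and $a\otimes_{x_A}fD$ are identified on the left, but on the right they go to $a\,x_A(f)\otimes(\ev_x\circ D)$ and $a\,\ev_x(f)\otimes(\ev_x\circ D)$, which differ whenever $x_A(f)-\ev_x(f)\in\nil{A}$ is nonzero.

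The paper's route sidesteps all of this. It observes that the statement is local (each of the three modules depends only on the stalk at $\red{x_A}$), so one may work on a superdomain; the accompanying lemma then shows via \eqref{eq:x_A-derivation} that $\Der_{x_A}(\sheaf(U),A)$ is a free $A$\nbd module on the $p+q$ generators $x_A\circ\partial/\partial x_i$, $x_A\circ\partial/\partial\theta_j$. That already matches $A\otimes T_{\red{x_A}}(M)$ as abstract $A$\nbd modules. For the last term one uses that $\stalk[T]{M}{\red{x_A}}$ is free over $\stalk{M}{\red{x_A}}$, so $A\otimes_{x_A}\stalk[T]{M}{\red{x_A}}$ is free over $A$ of the same rank. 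If you want a genuinely coordinate-free map, it is the \emph{rightmost} identification that carries one: $a\otimes_{x_A}D\mapsto\bigl(s\mapsto a\cdot x_A(D(s))\bigr)$ is well defined and is precisely what \eqref{eq:x_A-derivation} expresses; the middle term $A\otimes T_{\red{x_A}}(M)$ is best read as the computation of this free module, not as the target of a canonical map.
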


This result is clearly local so that it is enough to prove it in the
case $M$ is a superdomain. Next lemma does this for the first
identification. The second descends from eq.\
\eqref{eq:x_A-derivation}, since $\stalk[T]{M}{\red{x_A}} =
\stalk{M}{\red{x_A}} \otimes T_{\red{x_A}}(M)$, where $\stalk{M}{\red{x_A}}$ denotes the stalk
at ${\red{x_A}}$.

\begin{lemma}
Let $U$ be a superdomain in $\R^{p|q}$ with coordinate system
$\set{x_i,\theta_j}$, $A$ a super Weil algebra, and $x_A \in U_A$.
To any list of elements
\[
    \f = (f_1,\ldots,f_p,F_1,\ldots,F_q)
    \qquad f_i, F_j \in A
\]
there corresponds a $x_A$\nbd derivation
%\[
$    X_{\f} \colon \sheaf(U) \to A$
%\]
given by
\begin{equation} \label{eq:x_A-derivation}
    X_{\f}(s) = \sum_i f_i \, x_A \left( \pd{s}{x_i} \right) + \sum_j F_j \, x_A \left( \pd{s}{\theta_j} \right) \text{.}
\end{equation}
$X_{\f}$ is even (resp.\ odd) if and only if the $f_i$ are even
(resp.\ odd) and the $F_j$ are odd (resp.\ even). Moreover any
$x_A$\nbd derivation is of this form for a uniquely determined $\f$.
\end{lemma}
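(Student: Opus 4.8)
The plan is to prove the lemma by direct verification on a superdomain, exploiting Hadamard's lemma (Lemma \ref{lemma:polynomials}) and the explicit Taylor-expansion description of $A$\nbd points from Proposition \ref{prop:valori_assegnati}. First I would check that, for any list $\f = (f_1,\ldots,f_p,F_1,\ldots,F_q)$ with $f_i,F_j \in A$, the map $X_{\f}$ defined by \eqref{eq:x_A-derivation} is indeed an $x_A$\nbd derivation. The Leibniz rule is a formal consequence of the Leibniz rules for $\partial/\partial x_i$ and $\partial/\partial \theta_j$ together with the fact that $x_A$ is a superalgebra morphism: writing $\partial_{x_i}(st) = (\partial_{x_i}s)t + s(\partial_{x_i}t)$ and $\partial_{\theta_j}(st) = (\partial_{\theta_j}s)t + (-1)^{\p{s}}s(\partial_{\theta_j}t)$, applying $x_A$ and multiplying by $f_i$ resp. $F_j$, the signs organize exactly into $X_{\f}(st) = X_{\f}(s)x_A(t) + (-1)^{\p{X_{\f}}\p{s}}x_A(s)X_{\f}(t)$, where the parity of $X_{\f}$ is as stated because $\partial/\partial\theta_j$ is parity-reversing. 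The parity statement itself is immediate: $X_{\f}$ sends a homogeneous $s$ to a homogeneous element of fixed parity precisely when each $f_i$ matches the parity of $s$ and each $F_j$ matches the opposite parity.

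Next I would establish surjectivity of $\f \mapsto X_{\f}$. Given an $x_A$\nbd derivation $X \colon \sheaf(U) \to A$, set $f_i \coloneqq X(x_i)$ and $F_j \coloneqq X(\theta_j)$; I claim $X = X_{\f}$. By the derivation property $X$ vanishes on constants, and by induction on degree (using the Leibniz rule repeatedly) $X$ is determined on all polynomials in $x_i,\theta_j$ by its values on the coordinates — and on polynomials it manifestly agrees with $X_{\f}$, since both satisfy the same Leibniz recursion with the same initial data. To pass from polynomials to arbitrary sections I need that both $X$ and $X_{\f}$ annihilate a high enough power of $\maxid_{\red{x_A}}$: for $X_{\f}$ this follows because $x_A(\maxid_{\red{x_A}}^{k+1}) = 0$ for $k$ the height of $A$ (as in Proposition \ref{prop:valori_assegnati}), hence $x_A(\partial s/\partial x_i)$, $x_A(\partial s/\partial \theta_j)$ vanish when $s \in \maxid_{\red{x_A}}^{k+2}$; for $X$, the Leibniz rule gives $X(\maxid_{\red{x_A}}^{N}) \subseteq \nil{A}^{N-1} \cdot (\text{stuff}) = 0$ once $N$ exceeds the height, because writing $s = \sum s_a s_b \cdots$ with $\geq N$ factors in $\maxid_{\red{x_A}}$, each application of $X$ peels off one factor and leaves the rest evaluated by $x_A$ into $\nil{A}$. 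Then Hadamard's lemma writes any $s$ as $P + (\text{element of } \maxid_{\red{x_A}}^N)$ with $P$ a polynomial, and both maps agree on $P$ and vanish on the remainder.

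Finally, uniqueness of $\f$: if $X_{\f} = X_{\f'}$ then evaluating on the coordinate functions $x_i$ and $\theta_j$ forces $f_i = f_i'$ and $F_j = F_j'$ immediately, since $X_{\f}(x_i) = f_i$ and $X_{\f}(\theta_j) = F_j$ directly from \eqref{eq:x_A-derivation} (all other partial derivatives of a coordinate vanish).

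The main obstacle I anticipate is the bookkeeping in the two nilpotency/vanishing claims — namely pinning down exactly which power of the maximal ideal is killed by an arbitrary $x_A$\nbd derivation $X$ (as opposed to by $x_A$ itself), since $X$ only satisfies a twisted Leibniz rule rather than being multiplicative. One must argue carefully that expanding a product of $N$ elements of $\maxid_{\red{x_A}}$ under $X$ produces, in every term, one factor hit by $X$ (landing in $A$, but with vanishing reduced part) and $N-1$ factors hit by $x_A$ (landing in $\nil{A}$), so the term lies in $\nil{A}^{N-1}$; choosing $N$ one more than the height of $A$ then kills everything. Once that is in hand, the rest is the routine polynomial induction and the application of Hadamard's lemma, with the sign verification in the Leibniz computation being mechanical.
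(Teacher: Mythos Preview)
Your argument is correct and follows essentially the same route as the paper: agreement on polynomials via the Leibniz rule, then Hadamard's lemma combined with the fact that an $x_A$\nbd derivation sends $\maxid_{\red{x_A}}^{N}$ into $\nil{A}^{N-1}$. The paper organizes the surjectivity step slightly more economically by working with the single derivation $D \coloneqq X - X_{\f}$ (which vanishes on coordinates, hence on polynomials) rather than treating $X$ and $X_{\f}$ separately; note also that your parenthetical remark that $X(m)$ has vanishing reduced part for $m \in \maxid_{\red{x_A}}$ is false in general (take $X$ with $\pr_A \circ X = \partial/\partial x_1|_{\red{x_A}}$), though your conclusion only needs the other $N-1$ factors to land in $\nil{A}$, and watch the off-by-one: you need $N-1$ to \emph{exceed} the height, not merely equal it.
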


\begin{proof}
That $X_{\f}$ is a $x_A$\nbd derivation is clear. That the family
$\f$ is uniquely determined is also immediate from the fact that
they are the value of $X_{\f}$ on the coordinate functions.

Let now $X$ be a generic $x_A$\nbd derivation. Define
%\begin{align*}
$    f_i = X(x_i) \text{,}$
$    F_j = X(\theta_j) \text{,}$
%\end{align*}
and
%\[
 $   X_{\f} = f_i \, x_A \circ \pd{}{x_i} + F_j \, x_A \circ \pd{}{\theta_j} \text{.}$
%\]
Let $D = X-X_{\f}$. Clearly $D(x_i) = D(\theta_j) = 0$. We now show
that this implies $D=0$. Let $s\in\sheaf(U)$. Due to lemma
\ref{lemma:polynomials}, for each $x \in U$ and for each integer
$k\in\N$ there exists a polynomial $P$ in the coordinates such that
${s} - {P} \in \maxid_{U,x}^{k+1}$. Due to Leibniz rule
$D(s - P) \in \nil{A}^k$ and, since clearly $D(P)= 0$, $D(s)$ is in
$\nil{A}^k$ for arbitrary $k$. So we are done.
\end{proof}

%The previous results give the following corollary.

\begin{corollary}
We have: %the identifications
%\[
$    T_{x_A} M_A
    \isom \big( A \otimes T_{\red{x_A}}(M) \big)_0
    \isom \big( A \otimes_{x_A} \stalk[T]{M}{\red{x_A}} \big)_0 \text{.}$
%\]
\end{corollary}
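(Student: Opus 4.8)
The corollary asserts that the even part of $\Der_{x_A}(\sheaf(M),A)$ — which the preceding proposition identifies with $T_{x_A}M_A$ — is isomorphic to $\big(A\otimes T_{\red{x_A}}(M)\big)_0$ and to $\big(A\otimes_{x_A}\stalk[T]{M}{\red{x_A}}\big)_0$. The plan is simply to take the even part of the isomorphisms furnished by the proposition immediately above, after recalling that $T_{x_A}M_A\isom\Der_{x_A}(\sheaf(M),A)_0$ from the proposition that precedes it. So the only thing to check is that the isomorphism $\Der_{x_A}(\sheaf(M),A)\isom A\otimes T_{\red{x_A}}(M)$, and likewise $\isom A\otimes_{x_A}\stalk[T]{M}{\red{x_A}}$, is parity-respecting, so that it restricts to an isomorphism of even parts.

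First I would recall that all three of these objects carry a natural $\Z_2$\nbd grading: $\Der_{x_A}(\sheaf(M),A)$ is graded because the defining Leibniz condition is homogeneous (as the definition makes explicit via the sign $(-1)^{\p{X}\p{s}}$); $A\otimes T_{\red{x_A}}(M)$ is graded as a tensor product of graded spaces; and the $\rho$\nbd tensor product $A\otimes_{x_A}\stalk[T]{M}{\red{x_A}}$ inherits a grading since the relation $k\otimes b\cdot l\sim k\cdot x_A(b)\otimes l$ is homogeneous ($x_A$ being a superalgebra map, hence parity preserving). Then I would observe that the isomorphism constructed in the lemma above is grading-preserving: in the superdomain case it sends $\f=(f_1,\dots,f_p,F_1,\dots,F_q)$ to $X_{\f}$, and the lemma states precisely that $X_{\f}$ is even iff the $f_i$ are even and the $F_j$ are odd — i.e. iff the corresponding element of $A\otimes T_{\red{x_A}}(M)$, written in the coordinate basis $\partial/\partial x_i$ (even), $\partial/\partial\theta_j$ (odd), is even. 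Since the global identification in the proposition is obtained by patching this local one, it is parity-preserving as well; the same applies to the second identification via eq.~\eqref{eq:x_A-derivation} and the decomposition $\stalk[T]{M}{\red{x_A}}=\stalk{M}{\red{x_A}}\otimes T_{\red{x_A}}(M)$.

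Having established that, the corollary follows by applying the even-part functor $(\blank)_0$ to the chain of isomorphisms in the proposition, and using $T_{x_A}M_A\isom\Der_{x_A}(\sheaf(M),A)_0$. There is essentially no obstacle here; the one point requiring a word of care is the $\rho$\nbd tensor product, where one should note that $\big(A\otimes_{x_A}\stalk[T]{M}{\red{x_A}}\big)_0$ really does mean the image of the even elements under the quotient map and coincides with the even part of the quotient, which is immediate because the quotient map is a surjective even linear map. Thus the corollary is a direct grading-refinement of the preceding proposition and the proof is a one-line deduction once parity-preservation of the relevant maps has been recorded.
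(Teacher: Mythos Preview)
Your argument is correct and matches the paper's approach: the paper gives no proof for this corollary, treating it as an immediate consequence of combining the identification $T_{x_A}M_A \isom \Der_{x_A}\big(\sheaf(M),A\big)_0$ with the even part of the preceding proposition's isomorphisms. Your additional care in verifying that those isomorphisms are parity-preserving (via the parity statement in the lemma) is exactly the point one must check, and your remark on the grading of the $x_A$\nbd tensor product is appropriate.
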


%\bibliographystyle{alpha}
%\bibliography{local}

\end{document}